\newtheorem{theorem}{Theorem}
\newtheorem{thm}{Theorem}
\newtheorem*{thmm}{Theorem}
\newtheorem*{VL}{Vaaler's Lemma}
\newtheorem{lemma}[theorem]{Lemma}
\newtheorem*{mydef}{Definition}
\newtheorem*{HI}{Hilbert’s inequality}
\newtheorem{prop}{Proposition}
\numberwithin{equation}{section}
\begin{document}

\title[distribution and moments of the error term]{distribution and moments of the error term in the lattice point counting problem for $3$-dimensional Cygan-Kor{\'a}nyi balls}

\author{YOAV A. GATH}
\address{Centre for Mathematical Sciences, Wilberforce Road, Cambridge CB3 0WA, United Kingdom.}
\curraddr{}
\email{yg395@cam.ac.uk}
\thanks{}

\subjclass[2010]{11P21, 11K70, 62E20}

\keywords{Cygan-Kor{\'a}nyi norm, Lattice points in norm balls, limiting distribution}

%\date{\today}
\dedicatory{}
\begin{abstract}
We study fluctuations of the error term for the number of integer lattice points lying inside a $3$-dimensional Cygan-Kor{\'a}nyi ball of large radius. We prove that the error term, suitably normalized, has a limiting value distribution which is absolutely continuous, and we provide estimates for the decay rate of the corresponding density on the real line. In addition, we establish the existence of all moments for the normalized error term, and we prove that these are given by the moments of the corresponding density.
\end{abstract}
\maketitle
\tableofcontents
\section{Introduction, notation and statement of results}
\subsection{Introduction}
Given an integer $q\geq1$, let 
\begin{equation*}
N_{q}(x)=\big|\big\{\mathbf{z}\in\mathbb{Z}^{2q+1}:|\mathbf{z}|_{{\scriptscriptstyle Cyg}}\leq x\big\}\big|
\end{equation*}
be the counting function for the number of integer lattice points which lie inside a $(2q+1)$-dimensional Cygan-Kor{\'a}nyi ball of large radius $x>0$, where $|\,\,|_{{\scriptscriptstyle Cyg}}$ is the Cygan-Kor{\'a}nyi norm defined by
\begin{equation*}
|\mathbf{u}|_{{\scriptscriptstyle Cyg}}=\Big(\Big(u^{2}_{1}+\cdots+u^{2}_{2q}\Big)^{2}+u^{2}_{2q+1}\Big)^{1/4}\,.
\end{equation*}
The problem of estimating $N_{q}(x)$ arises naturally from the homogeneous structure imposed on the $q$-th Heisenberg group $\mathbb{H}_{q}$ (when realized over $\mathbb{R}^{2q+1}$), namely we have
\begin{equation*}
N_{q}(x)=\big|\mathbb{Z}^{2q+1}\cap\delta_{x}\mathcal{B}\big|\,,
\end{equation*}
where $\delta_{x}\mathbf{u}=(xu_{{\scriptscriptstyle 1}},\ldots, xu_{{\scriptscriptstyle 2q}},x^{2}u_{{\scriptscriptstyle 2q+1}})$ with $x>0$ are the Heisenberg dilations, and $\mathcal{B}=\big\{\mathbf{u}\in\mathbb{R}^{2q+1}:|\mathbf{u}|_{{\scriptscriptstyle Cyg}}\leq1\big\}$ is the unit ball with respect to the Cygan-Kor{\'a}nyi norm (see \cite{garg2015lattice, gath2019analogue} for more details). It is clear that $N_{q}(x)$ will grow for large $x$ like $\textit{vol}\big(\mathcal{B}\big)x^{2q+2}$, where $\textit{vol}(\cdot)$ is the Euclidean
volume, and we shall be interested in the error term resulting from this approximation.
\begin{mydef}
Let $q\geq1$ be an integer. For $x>0$, define
\begin{equation*}
\mathcal{E}_{q}(x)=N_{q}(x)-\textit{vol}\big(\mathcal{B}\big)x^{2q+2}\,,
\end{equation*}
and set $\kappa_{q}=\sup\big\{\alpha>0:\big|\mathcal{E}_{q}(x)\big|\ll x^{2q+2-\alpha}\big\}$.
\end{mydef}
\noindent
We shall refer to the lattice point counting problem for $(2q+1)$-dimensional Cygan-Kor{\'a}nyi balls as the problem of determining the value of $\kappa_{q}$. This problem was first considered by Garg, Nevo and Taylor \cite{garg2015lattice}, who established the lower bound $\kappa_{q}\geq2$ for all integers $q\geq1$. For $q=1$, which is the case we shall be concerned with in the present paper, this lower bound was proven by the author to be sharp, that is $\kappa_{1}=2$ (see \cite{gath2017best}, Theorem $1.1$, and note that a different normalization is used for the exponent of the error term). Thus, the lattice point counting problem for $3$-dimensional Cygan-Kor{\'a}nyi balls is settled.\\
The behavior of the error term $\mathcal{E}_{q}(x)$ in the lattice point counting problem for $(2q+1)$-dimensional Cygan-Kor{\'a}nyi balls with $q>1$ is of an entirely different nature compared to the case $q=1$. In the higher dimensional case $q\geq3$, the best result available to date is $|\mathcal{E}_{q}(x)|\ll x^{2q-2/3}$ which was proved by the author (\cite{gath2019analogue}, Theorem $1$), and we also have (\cite{gath2019analogue}, Theorem $3$) the $\Omega$-result $\mathcal{E}_{q}(x)=\Omega\big(x^{2q-1}\big(\log{x}\big)^{1/4}\big(\log{\log{x}}\big)^{1/8}\big)$. It follows that $\frac{8}{3}\leq\kappa_{q}\leq3$. In regards to what should be the conjectural value of $\kappa_{q}$ in the case of $q\geq3$, it is known (\cite{gath2019analogue}, Theorem $2$) that $\mathcal{E}_{q}(x)$ has order of magnitude $x^{2q-1}$ in mean-square, which leads to the conjecture that $\kappa_{q}=3$. As for the case $q=2$, the conjectural value of $\kappa_{2}$ should be the same as in the higher dimensional case, namely $\kappa_{2}=3$.\\
Our goal in the present paper is to investigate the nature in which $N_{1}(x)$ fluctuates around its expected value $\textit{vol}\big(\mathcal{B}\big)x^{4}$. In the higher dimensional case $q\geq3$, this has been carried out by the author, and we have the following result (\cite{gathdistribution}, Theorem $1$ and Theorem $3$). 
\begin{thmm}[\cite{gathdistribution}]
Let $q\geq3$ be an integer, and let $\widehat{\mathcal{E}}_{q}(x)=\mathcal{E}_{q}(x)/x^{2q-1}$ be the suitably normalized error term. Then there exists a probability density $\mathcal{P}_{q}(\alpha)$ such that, for any (piecewise)-continuous function $\mathcal{F}$ satisfying the growth condition $|\mathcal{F}(\alpha)|\ll\alpha^{2}$, we have
\begin{equation*}
\lim\limits_{X\to\infty}\frac{1}{X}\int\limits_{ X}^{2X}\mathcal{F}\big(\widehat{\mathcal{E}}_{q}(x)\big)\textit{d}x=\int\limits_{-\infty}^{\infty}\mathcal{F}(\alpha)\mathcal{P}_{q}(\alpha)\textit{d}\alpha\,.
\end{equation*}
The density $\mathcal{P}_{q}(\alpha)$ can be extended to the whole complex plane $\mathbb{C}$ as an entire function of $\alpha$, which satisfies for any non-negative integer $j\geq0$, and any $\alpha\in\mathbb{R}$, $|\alpha|$ sufficiently large in terms of $j$ and $q$, the decay estimate
\begin{equation*}
\begin{split}
\big|\mathcal{P}^{(j)}_{q}(\alpha)\big|\leq\exp{\Big(-|\alpha|^{4-\beta/\log\log{|\alpha|}}\Big)}\,,
\end{split}
\end{equation*}
where $\beta>0$ is an absolute constant.
\end{thmm}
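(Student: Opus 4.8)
The plan is to adapt the now-standard method — going back to Heath-Brown's analysis of the circle and Dirichlet divisor problems, and refined by Bleher, Lau and others — of realizing the normalized error term as a conditionally convergent almost periodic function of the radius, and then studying it through a combination of Weyl equidistribution on tori with a probabilistic limit theorem.

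First I would derive an explicit oscillating expansion for $\widehat{\mathcal E}_q$. Writing the Cygan-Kor{\'a}nyi condition as $(z_1^2+\cdots+z_{2q}^2)^2+z_{2q+1}^2\le x^4$ and carrying out a truncated Voronoi/Hardy--Landau summation — first over $z_{2q+1}$, then over $\mathbf z'\in\mathbb Z^{2q}$ via the classical formula for $\sum_{m\le t}r_{2q}(m)$ — one should obtain, after the substitution $u=x^2$ that turns the Heisenberg dilation into a genuine dilation, a representation
\[
\widehat{\mathcal E}_q(x)=\sum_{n\ge 1}a_n\cos\!\bigl(2\pi\gamma_n u+\eta_n\bigr)+(\text{remainder}),
\]
where the frequencies $\gamma_n$ run through an explicit set of the shape $\{c\sqrt n\}$ attached to sums of $2q$ squares, the phases $\eta_n$ are fixed, and $a_n$ has size (arithmetic factor)$\,\cdot\,n^{-3/4}$, so that $\sum_n a_n^2<\infty$ (consistent with the mean-square asymptotic of \cite{gath2019analogue}) while $\sum_n|a_n|=\infty$. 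The crucial quantitative point is that the tail $\widehat{\mathcal E}_q-\sum_{n\le N}a_n\cos(\cdots)$ be small in mean square over $[X,2X]$, uniformly in $X$, as $N\to\infty$. One also checks that $\{\gamma_n\}$ is linearly independent over $\mathbb Q$, which reduces to the irrationality of $\sqrt n$ for non-square $n$ plus an elementary exclusion of rational relations among square roots of distinct squarefree kernels (any genuine relation would merely confine the relevant orbit to an explicit subtorus).

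For fixed $N$, Weyl's criterion then gives that $(\gamma_1u,\dots,\gamma_N u)\bmod 1$ equidistributes on $\mathbb T^N$, so that $\frac1X\int_X^{2X}\mathcal F(S_N(x))\,dx\to\mathbb E\,\mathcal F(Y_N)$ with $S_N$ the truncated sum and $Y_N=\sum_{n\le N}a_n\cos(2\pi\Theta_n+\eta_n)$, the $\Theta_n$ i.i.d.\ uniform on $[0,1)$. Letting $N\to\infty$, the Kolmogorov three-series theorem gives $Y_N\to Y$ almost surely and in $L^2$; combined with the uniform mean-square control of the tail and with uniform $L^2$-boundedness of $\widehat{\mathcal E}_q$ on $[X,2X]$ and of the $Y_N$, this pushes test functions of quadratic growth through the limit and yields the limit law, with $\mathcal P_q$ the density of $Y$. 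By independence the characteristic function factors, $\int_{\mathbb R}e^{it\alpha}\mathcal P_q(\alpha)\,d\alpha=\prod_{n\ge 1}J_0(a_n t)$ (the characteristic function of $a\cos(2\pi\Theta+\eta)$ being $J_0(at)$); using $|J_0(v)|\le\min\{1,C|v|^{-1/2}\}$ and retaining only the $\asymp|t|^{4/3}$ indices $n$ with $a_n|t|\ge 1$ gives super-polynomial decay $|\widehat{\mathcal P}_q(t)|\le\exp(-c|t|^{4/3-\varepsilon})$. Feeding this into Fourier inversion shows $\mathcal P_q$ extends to an entire function, and bounding $|\mathcal P_q^{(j)}(\alpha)|\le\int_{\mathbb R}|t|^j|\widehat{\mathcal P}_q(t)|\,dt$ and optimizing the cut-off (a saddle-point/contour-shift argument) yields $\exp(-|\alpha|^{4-\beta/\log\log|\alpha|})$ — the exponent $4$ arising as the conjugate of $4/3$, and the $\log\log$ loss being the price of the arithmetic irregularity of the $a_n$ (those $n$ with abnormally large sum-of-squares multiplicity).

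\textbf{The main obstacle} is the analytic input of the first step: producing the Voronoi expansion with a remainder sharp enough for the uniform mean-square tail bound. This forces one to disentangle the two independent lattice-point errors (from the inner $\mathbb Z^{2q}$-count and from the $z_{2q+1}$-summation), to carry out the stationary-phase analysis of the resulting Bessel kernels, and — underpinning everything — to prove a large-sieve/spacing estimate for the arithmetically weighted frequencies $\gamma_n$. For the companion assertion that all moments converge to $\mathbb E\,Y^k$ one needs the further uniform bound $\int_X^{2X}|\widehat{\mathcal E}_q(x)|^k\,dx\ll_k X$, which requires $L^k$ (not merely $L^2$) control of the tail via dyadic decomposition and H\"older together with a $k$-fold spacing estimate counting near-solutions of $\gamma_{n_1}\pm\cdots\pm\gamma_{n_k}=O(1/X)$ with the arithmetic weights attached; this is where essentially all of the remaining work resides.
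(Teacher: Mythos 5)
Your plan follows the same blueprint the paper executes in full for $q=1$ (Propositions 1--4), which in turn parallels the proof of the cited statement in \cite{gathdistribution}: a Voronoi/Mellin expansion obtained from the functional equation of the Epstein zeta function $Z(s)=\sum_m r_{2}(m)m^{-s}$; an $L^{1}$-almost-periodicity step that regroups frequencies by squarefree kernel and controls the tail via Vaaler's trigonometric approximation of $\psi$ and Hilbert's inequality for the off-diagonal mean square; an equidistribution limit theorem in the style of Heath-Brown; and a contour shift giving the density decay, driven by the Wigert-type bound $r_{2q}(m)\leq m^{c/\log\log m}$ --- which is precisely the source of the $\log\log$ loss you anticipate in the exponent $4-\beta/\log\log|\alpha|$. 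Two details to flag, however. First, after grouping by squarefree kernel the summands $\upphi_{q,m}$ are not single cosines but infinite trigonometric series, so the characteristic function factorizes as $\prod_{m}\Phi_{q,m}(\alpha)$ with $\Phi_{q,m}(\alpha)=\int_{0}^{1}e^{2\pi i\alpha\upphi_{q,m}(t)}\,dt$, not literally as $\prod_{n}J_{0}(a_{n}t)$; the tail-decay analysis is structurally the same, but one cannot quote the Bessel formula directly. Second, and more substantively, the paper remarks explicitly (after Theorem 2) that for $q\geq 3$ the $\upphi_{q,m}$ are \emph{aperiodic}, in contrast to the periodic $\upphi_{1,m}$ used here; so the Weyl-on-$\mathbb{T}^{N}$ picture in your sketch, and Heath-Brown's Lemma as quoted in this paper as Lemma 5 (which assumes periodicity of the $\mathbf{b}_{i}$), do not apply verbatim --- \cite{gathdistribution} must instead invoke the aperiodic (Ces\`aro-mean) variant. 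This is the one place where your plan implicitly assumes structure that is present for $q=1$ but absent for $q\geq 3$, and is exactly the distinction the present paper highlights between the two regimes.
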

\noindent
We are going to establish an analogue of the above result in the case of $q=1$, where the suitable normalization of the error term is given by $\widehat{\mathcal{E}}_{1}(x)=\mathcal{E}_{1}(x)/x^{2}$. We shall see once more a distinction between the case $q=1$ and the higher dimensional case $q\geq3$, this time with respect to the probability density $\mathcal{P}_{q}(\alpha)$.
\subsection{Statement of the main results}
\begin{thm}
Let $\widehat{\mathcal{E}}_{1}(x)=\mathcal{E}_{1}(x)/x^{2}$ be the suitably normalized error term. Then $\widehat{\mathcal{E}}_{1}(x)$ has a limiting value distribution in the sense that, there exists a probability density $\mathcal{P}_{1}(\alpha)$ such that, for any (piecewise)-continuous function $\mathcal{F}$ of polynomial growth we have
\begin{equation}\label{eq:1.1}
\lim\limits_{X\to\infty}\frac{1}{X}\int\limits_{X}^{2X}\mathcal{F}\big(\widehat{\mathcal{E}}_{1}(x)\big)\textit{d}x=\int\limits_{-\infty}^{\infty}\mathcal{F}(\alpha)\mathcal{P}_{1}(\alpha)\textit{d}\alpha\,.
\end{equation}
The density $\mathcal{P}_{1}(\alpha)$ satisfies for any non-negative integer $j\geq0$, and any $\alpha\in\mathbb{R}$, $|\alpha|$ sufficiently large in terms of $j$, the decay estimate
\begin{equation}\label{eq:1.2}
\begin{split}
\big|\mathcal{P}^{(j)}_{1}(\alpha)\big|\leq\exp{\bigg(-\frac{\pi}{2}|\alpha|\exp{\big(\rho|\alpha|\big)}\bigg)}\,,
\end{split}
\end{equation}
where $\rho>0$ is an absolute constant.
\end{thm}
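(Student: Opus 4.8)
\emph{Overview and Step 1 (a summation formula).} The plan is to convert the statement into a probabilistic one: first approximate $\widehat{\mathcal{E}}_1$ in mean square by a finite almost‑periodic sum, then identify the limiting random variable and read off absolute continuity and the tail bound from its characteristic function. Applying Poisson summation to $N_{1}(x)=|\mathbb{Z}^{3}\cap\delta_{x}\mathcal{B}|$ in the Heisenberg coordinates gives $\mathcal{E}_{1}(x)=x^{4}\sum_{\mathbf{k}\neq\mathbf{0}}\widehat{\mathbf{1}_{\mathcal{B}}}(k_{1}x,k_{2}x,k_{3}x^{2})$, and the decay of $\widehat{\mathbf{1}_{\mathcal{B}}}$ is controlled by the curvature of $\partial\mathcal{B}$. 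Because $\partial\mathcal{B}$ is flat of order four at the poles $(0,0,\pm1)$, the frequencies with $k_{3}\neq0$, and the axial ones $k_{1}=k_{2}=0$, contribute only $O(x^{-4/3})$ and $O(x^{-1})$ respectively to $\widehat{\mathcal{E}}_{1}$; the entire main term comes from the equatorial frequencies $(k_{1}x,k_{2}x,0)$, for which $\widehat{\mathbf{1}_{\mathcal{B}}}$ is the two–dimensional transform of the radial profile $r\mapsto 2\sqrt{1-r^{4}}$ (vanishing like $(1-r)^{1/2}$ at the boundary) and hence behaves like $\rho^{-2}e^{\pm 2\pi i\rho}$ with $\rho=x\sqrt{k_{1}^{2}+k_{2}^{2}}$. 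Equivalently one may write $N_{1}(x)=\sum_{|z_{3}|\le x^{2}}R_{2}\!\big(\sqrt{x^{4}-z_{3}^{2}}\big)$ with $R_{2}$ the Gauss circle counting function, insert the Hardy–Voronoi expansion for $R_{2}$, and evaluate the resulting $z_{3}$–sums by stationary phase. Either way one arrives, for every $N$, at
\[
\widehat{\mathcal{E}}_{1}(x)=c_{0}\sum_{n\le N}\frac{r_{2}(n)}{n}\cos\!\big(2\pi\sqrt{n}\,x+\phi_{0}\big)+R_{N}(x),\qquad r_{2}(n)=\#\{(a,b)\in\mathbb{Z}^{2}:a^{2}+b^{2}=n\},
\]
with $c_{0},\phi_{0}$ explicit and $\limsup_{X\to\infty}\tfrac1X\int_{X}^{2X}|R_{N}(x)|^{2}\,dx\to0$ as $N\to\infty$ (near–orthogonality of the $\cos(2\pi\sqrt{n}\,x)$ on long intervals, together with $\sum_{n}r_{2}(n)^{2}n^{-2}<\infty$). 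The analytic work here, including the mean–square asymptotic for $\widehat{\mathcal{E}}_{1}$ (the $q=1$ analogue of \cite{gathdistribution}, Theorem $2$), rests on \cite{gath2017best, gath2019analogue}, and I expect this to be the first genuine obstacle: the delicate stationary–phase estimates forced by the flat poles are exactly what makes $q=1$ behave differently from $q\ge3$.

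\emph{Step 2 (random model and passage to the limit).} Grouping $n=k^{2}m$ with $m$ squarefree turns the frequency set into $\{k\sqrt{m}\}$, and since $\{\sqrt{m}:m\text{ squarefree}\}$ is linearly independent over $\mathbb{Q}$, Weyl's equidistribution theorem gives, for fixed $N$ and bounded continuous $\mathcal{F}$,
\[
\lim_{X\to\infty}\frac1X\int_{X}^{2X}\mathcal{F}\!\Big(c_{0}\sum_{n\le N}\tfrac{r_{2}(n)}{n}\cos(2\pi\sqrt{n}\,x+\phi_{0})\Big)\,dx=\mathbf{E}\,\mathcal{F}(\mathbf{X}_{N}),
\]
where $\mathbf{X}_{N}=c_{0}\sum_{m}\sum_{k^{2}m\le N}\tfrac{r_{2}(k^{2}m)}{k^{2}m}\cos(2\pi kU_{m}+\phi_{0})$ with $(U_{m})_{m}$ i.i.d.\ uniform on $[0,1]$. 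By $\sum_{n}r_{2}(n)^{2}n^{-2}<\infty$ one has $\mathbf{X}_{N}\to\mathbf{X}:=c_{0}\sum_{m}\sum_{k\ge1}\tfrac{r_{2}(k^{2}m)}{k^{2}m}\cos(2\pi kU_{m}+\phi_{0})$ in $L^{2}$ and almost surely. Feeding in the a priori moment bound $\sup_{X}\tfrac1X\int_{X}^{2X}|\widehat{\mathcal{E}}_{1}|^{2p}\ll_{p}1$ (the ``existence of all moments'' part of the paper) for uniform integrability, approximating a polynomial–growth $\mathcal{F}$ by bounded ones, and combining with Step~1, one obtains \eqref{eq:1.1} with $\mathcal{P}_{1}$ the distribution of $\mathbf{X}$ — provided $\mathbf{X}$ is shown to be absolutely continuous.

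\emph{Step 3 (characteristic function: absolute continuity).} Put $\Psi(\xi)=\mathbf{E}\,e^{i\xi\mathbf{X}}=\prod_{m}\Psi_{m}(\xi)$ with $\Psi_{m}(\xi)=\int_{0}^{1}\exp\!\big(i\xi g_{m}(u)\big)\,du$ and $g_{m}(u)=c_{0}\sum_{k}\tfrac{r_{2}(k^{2}m)}{k^{2}m}\cos(2\pi ku+\phi_{0})$, so that $\int_{0}^{1}g_{m}=0$ and (using $r_{2}(ab)\le r_{2}(a)r_{2}(b)$, the $k=1$ term dominating) $\|g_{m}\|_{\infty}\asymp r_{2}(m)/m$, $\|g_{m}\|_{2}^{2}\asymp r_{2}(m)^{2}/m^{2}$. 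Since $\sum_{m}\|g_{m}\|_{2}^{2}<\infty$ the product converges to an entire function; from the oscillatory–integral bound $|\Psi_{m}(\xi)|\ll\min\!\big(1,(\|g_{m}\|_{\infty}|\xi|)^{-1/2}\big)$ one gets $\Psi\in L^{1}(\mathbb{R})$ and $|\Psi(\xi)|\ll e^{-\rho_{1}|\xi|}$ for $|\xi|$ large, whence $\mathbf{X}$ has a real–analytic density $\mathcal{P}_{1}=\tfrac1{2\pi}\widehat{\Psi}$.

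\emph{Step 4 (growth of $\Psi$ and the decay bound).} For \eqref{eq:1.2} one needs the vertical growth of $\Psi$. The Laplace method at the extremum of $g_{m}$ gives $\log\Psi_{m}(i\tau)=\tau\,\max_{u}(-g_{m}(u))\,(1+o(1))-\tfrac12\log(\cdots)\asymp\tau\|g_{m}\|_{\infty}$ when $\tau\|g_{m}\|_{\infty}$ is large, while $\log\Psi_{m}(i\tau)\asymp\tau^{2}\|g_{m}\|_{2}^{2}$ when it is small; summing over $m$ and using $\sum_{m\le M}r_{2}(m)/m\sim\pi\log M$, $\sum_{m\le M}r_{2}(m)^{2}\asymp M\log M$ and Landau's $\#\{m\le M:r_{2}(m)>0\}\asymp M/\sqrt{\log M}$, one finds $\log|\Psi(i\tau)|\asymp\tau\log\tau$: the coefficients $r_{2}(n)/n$ decay at exactly the critical rate $n^{-1}$, which makes $\Psi$ of order one and of infinite exponential type but with this refined ``$\tau\log\tau$'' growth (contrast the rate $\tau^{4/3}$ when the coefficients decay like $n^{-3/4}$, responsible for the exponent $4$ in \cite{gathdistribution}). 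The same bound holds for $\Psi(\sigma+i\tau)$ with an extra factor $e^{-\rho_{1}|\sigma|/2}$ once $|\sigma|\gg\tau\log\tau$, so shifting the contour in $\mathcal{P}_{1}^{(j)}(\alpha)=\tfrac{(-i)^{j}}{2\pi}\int_{\mathbb{R}}\xi^{j}\Psi(\xi)e^{-i\alpha\xi}\,d\xi$ to the line $\{\operatorname{Im}\xi=-\operatorname{sgn}(\alpha)\tau\}$ gives $|\mathcal{P}_{1}^{(j)}(\alpha)|\ll_{j}e^{-|\alpha|\tau+C\tau\log\tau}$ for every $\tau>0$; optimising ($\tau\asymp e^{\rho_{0}|\alpha|}$) yields $|\mathcal{P}_{1}^{(j)}(\alpha)|\ll_{j}\exp\!\big(-c_{0}e^{\rho_{0}|\alpha|}\big)$, which for any $\rho<\rho_{0}$ dominates the bound $\exp\!\big(-\tfrac{\pi}{2}|\alpha|e^{\rho|\alpha|}\big)$ of \eqref{eq:1.2} once $|\alpha|$ is large. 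The second genuine obstacle is to obtain the ``$\tau\log\tau$'' growth with the correct constants: a crude estimate only gives $\log|\Psi(i\tau)|\ll e^{c\tau}$, which upon optimisation loses everything below $\exp(-c|\alpha|\log|\alpha|)$, and extracting the true rate — hence the admissible $\rho$ and the clean constant $\tfrac{\pi}{2}$ — requires the precise distribution of the values $r_{2}(n)/n$.
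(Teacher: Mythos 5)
Your proposal follows essentially the same four-step architecture as the paper: (i) a Vorono\"i-type expansion $\widehat{\mathcal{E}}_{1}(x)\approx -\frac{\sqrt2}{\pi}\sum_{n}\frac{r_{2}(n)}{n}\cos(2\pi\sqrt n\,x)$ with a mean-square small remainder; (ii) regrouping by squarefree kernel and invoking linear independence of $\{\sqrt m: m\ \text{squarefree}\}$ to pass to a random model via equidistribution; (iii) identifying the density as the Fourier transform of the infinite product $\Phi_{1}(\alpha)=\prod_{m}\Phi_{1,m}(\alpha)$; (iv) shifting the contour to $\operatorname{Im}\xi=\mp\tau$ and optimizing $\tau\asymp e^{\rho|\alpha|}$ against the vertical growth $\log|\Phi_{1}(i\tau)|\ll|\tau|\log|\tau|$ of the characteristic function to get $\exp(-c|\alpha|e^{\rho|\alpha|})$. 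You also correctly single out the decisive feature that distinguishes $q=1$ from $q\ge3$: the coefficient decay is exactly $n^{-1}$, forcing $\sum_{m\le M}\|g_{m}\|_{\infty}\asymp\log M$ and thus $\tau\log\tau$ vertical growth, whence the double-exponential tail.

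The technical execution differs at two points, neither of which changes the logical structure. In Step 1 you propose Poisson summation in Heisenberg coordinates plus stationary phase at the flat poles; the paper instead fixes $n=z_3$, writes $N_{1}(x)=\sum_{m^2+n^2\le x^4}r_{2}(m)$, separates off a $\psi$-sum $T(x^2)$ (controlled later with Vaaler's lemma and Hilbert's inequality), Taylor-expands $(x^4-m^2)^{1/2}$, and treats the resulting weighted $r_2$-sums by Mellin inversion and the functional equation of the Epstein zeta $Z(s)=\sum r_2(m)m^{-s}$. The Mellin route is more economical here because the relevant oscillatory integrals reduce to a single Bessel function $\mathcal J_{3/2}$ rather than a family of stationary-phase expansions. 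In Step 4 you invoke Laplace's method at the extremum of $g_m$; the paper simply splits the infinite product at $m=\ell(\alpha)=[(\epsilon^{-1}|\alpha|)^{1/\theta(|\alpha|)}]+1$, bounding $\Phi_{1,m}$ trivially by $\exp(2\pi|\tau|\|\upphi_{1,m}\|_\infty)$ for $m<\ell$ and by a second-order Taylor expansion for $m\ge\ell$, using Wigert's $r_2(m)\le m^{c/\log\log m}$ to quantify the cutoff. This is functionally equivalent to your analysis, and the correction $\theta(|\alpha|)=1-c/\log\log|\alpha|$ is precisely the mechanism you flag as your ``second genuine obstacle.'' One small overstatement: on the real axis the paper's bound \eqref{eq:4.3} only gives $|\Phi_1(\sigma)|\le\exp(-c|\sigma|^{2-1/\theta(|\sigma|)}\log|\sigma|)$, which is $\exp(-|\sigma|^{1-o(1)})$, slightly weaker than the exponential decay $e^{-\rho_1|\xi|}$ you assert in Step 3; this does not matter, since absolute continuity only needs $\Phi_1\in L^1$, and the tail bound \eqref{eq:1.2} comes from the contour shift, not from real-axis decay.
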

\noindent
\textbf{Remark.} In the particular case where $\mathcal{F}(\alpha)=\alpha^{j}$ with $j\geq1$ an integer, we have
\begin{equation*}
\lim\limits_{X\to\infty}\frac{1}{X}\int\limits_{ X}^{2X}\widehat{\mathcal{E}}^{j}_{1}(x)\textit{d}x=\int\limits_{-\infty}^{\infty}\alpha^{j}\mathcal{P}_{1}(\alpha)\textit{d}\alpha\,,
\end{equation*}
thereby establishing the existence of all moments (see the remark following Proposition $4$ in $\S5$ regarding quantitative estimates).\\\\
\textbf{Remark.} Note that the decay estimates \eqref{eq:1.2} for the probability density in the case where $q=1$ are much stronger compared to the corresponding ones in the higher dimensional case $q\geq3$ as stated in the introduction. Also, note that whereas for $q\geq3$, the density $\mathcal{P}_{q}(\alpha)$ extends to the whole complex plane as an entire function of $\alpha$, and in particular is supported on all of the real line, Theorem $1$ above makes no such claim in the case of $q=1$.\\\\
Our next result gives a closed form expression for all the integral moments of the density $\mathcal{P}_{1}(\alpha)$.
\begin{thm}
Let $j\geq1$ be an integer. Then the $j$-th integral moment of $\mathcal{P}_{1}(\alpha)$ is given by
\begin{equation}\label{eq:1.3}
\int\limits_{-\infty}^{\infty}\alpha^{j}\mathcal{P}_{1}(\alpha)\textit{d}\alpha=\sum_{s=1}^{j}\underset{\,\,\ell_{1},\,\ldots\,,\ell_{s}\geq1}{\sum_{\ell_{1}+\cdots+\ell_{s}=j}}\,\,\frac{j!}{\ell_{1}!\cdots\ell_{s}!}\underset{\,\,\,m_{s}>\cdots>m_{1}}{\sum_{m_{1},\,\ldots\,,m_{s}=1}^{\infty}}\prod_{i=1}^{s}\Xi(m_{i},\ell_{i})\,,
\end{equation}
where the series on the RHS of \eqref{eq:1.4} converges absolutely. For integers $m,\ell\geq1$, the term $\Xi(m,\ell)$ is given by
\begin{equation}\label{eq:1.4}
\Xi(m,\ell)=(-1)^{\ell}\bigg(\frac{1}{\sqrt{2}\pi}\bigg)^{\ell}\frac{\mu^{2}(m)}{m^{\ell}}\sum_{\textit{e}_{1},\ldots,\textit{e}_{\ell}=\pm1}\underset{\textit{e}_{1}k_{1}+\cdots+\textit{e}_{\ell}k_{\ell}=0}{\sum_{k_{1},\ldots,k_{\ell}=1}^{\infty}}\prod_{i=1}^{\ell}\frac{r_{{\scriptscriptstyle2}}\big(mk^{2}_{i}\big)}{k_{i}^{2}}\,,
\end{equation}
where $\mu(\cdot)$ is the M{\"o}bius function, and $r_{{\scriptscriptstyle2}}(\cdot)$ is the counting function for the number of representation of an integer as the sum of two squares. For $\ell=1$ the sum in \eqref{eq:1.4} is void, so by definition $\Xi(m,1)=0$. In particular, it follows that $\int_{-\infty}^{\infty}\alpha\mathcal{P}_{1}(\alpha)\textit{d}\alpha=0$, and $\int_{-\infty}^{\infty}\alpha^{j}\mathcal{P}_{1}(\alpha)\textit{d}\alpha<0$ for $j\equiv1\,(2)$ greater then one. 
\end{thm}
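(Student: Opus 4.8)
The plan is to derive the moment formula by starting from the integral representation of the limiting moments supplied by the Remark following Theorem $1$, namely
\[
\int_{-\infty}^{\infty}\alpha^{j}\mathcal{P}_{1}(\alpha)\,\textit{d}\alpha=\lim_{X\to\infty}\frac{1}{X}\int_{X}^{2X}\widehat{\mathcal{E}}_{1}^{\,j}(x)\,\textit{d}x,
\]
and then evaluating the right-hand side by means of a truncated Fourier-type (Hardy--Voronoi style) expansion for $\widehat{\mathcal{E}}_{1}(x)$. First I would recall the explicit approximate formula for $\mathcal{E}_{1}(x)$ that underlies the proof of $\kappa_{1}=2$: after normalizing, $\widehat{\mathcal{E}}_{1}(x)$ is, up to a small error, a finite trigonometric sum whose frequencies are of the form $x\sqrt{m}$ with $m$ squarefree, and whose amplitudes involve $\mu^{2}(m)$, $r_{2}(mk^{2})/k^{2}$, and a factor $(\sqrt{2}\pi)^{-1}m^{-1}$ together with a phase. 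Raising this sum to the $j$-th power, expanding multinomially, and integrating in $x$ over $[X,2X]$, the diagonal (zero-frequency) contributions survive in the limit while the off-diagonal oscillatory terms vanish by the standard second-moment/oscillation estimates already available in the cited works. The surviving diagonal terms are exactly those for which a signed combination of the frequencies cancels; organizing the bookkeeping of which frequencies coincide produces the outer sum over compositions $\ell_{1}+\cdots+\ell_{s}=j$ with multinomial coefficient $j!/(\ell_{1}!\cdots\ell_{s}!)$, the sum over distinct moduli $m_{1}<\cdots<m_{s}$, and the product $\prod_{i}\Xi(m_{i},\ell_{i})$, while within each block the internal frequency-cancellation condition $e_{1}k_{1}+\cdots+e_{\ell}k_{\ell}=0$ gives the definition \eqref{eq:1.4} of $\Xi(m,\ell)$. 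The reason $\Xi(m,1)=0$ is that a single frequency $\sqrt{m}$ can never cancel by itself, so the $\ell=1$ inner sum is empty.

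Next I would justify absolute convergence of the resulting multiple series. Using $r_{2}(n)\ll n^{\varepsilon}$ and the fact that for fixed signs the number of solutions of $e_{1}k_{1}+\cdots+e_{\ell}k_{\ell}=0$ with $k_{i}\le K$ is $O(K^{\ell-1+\varepsilon})$, one bounds the inner sum in $\Xi(m,\ell)$ by a convergent multiple sum dominated by $\prod 1/k_{i}^{2}$ with one free relation removed, giving $|\Xi(m,\ell)|\ll_{\ell} m^{-\ell+\varepsilon}$ for $\ell\ge2$; summing over the $m_{i}$ and over the finitely many compositions then yields absolute convergence. This same bound also shows the interchange of limit and summation (truncation of the Fourier expansion, then letting $X\to\infty$ term by term) is legitimate, which is the technical heart of turning the limiting-average identity into the closed form \eqref{eq:1.3}.

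Finally, the sign assertions. For $j=1$ the only composition is $s=1$, $\ell_{1}=1$, and $\Xi(m,1)=0$, so the first moment is $0$. For odd $j>1$: in any term of \eqref{eq:1.3} the blocks with $\ell_{i}=1$ contribute a factor $0$, so only compositions all of whose parts are $\ge2$ survive; in $\Xi(m,\ell)$ with $\ell\ge2$ the inner double sum $\sum_{e_{i}=\pm1}\sum_{e_{1}k_{1}+\cdots+e_{\ell}k_{\ell}=0}\prod r_{2}(mk_{i}^{2})/k_{i}^{2}$ is manifestly a sum of \emph{positive} terms (invariant under $e\mapsto-e$, and nonempty since $\ell\ge2$), hence strictly positive, so $\operatorname{sgn}\Xi(m,\ell)=(-1)^{\ell}$. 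In a surviving composition with $\ell_{1}+\cdots+\ell_{s}=j$ odd and every $\ell_{i}\ge2$, the number of odd parts is itself odd, so $\prod_{i}(-1)^{\ell_{i}}=(-1)^{j}=-1$; since every such term is then strictly negative and at least one such composition exists (e.g.\ $s=1$, $\ell_{1}=j$), the whole sum is $<0$. The main obstacle is the rigorous control of the off-diagonal terms and the uniformity needed to exchange the limit in $X$ with the infinite summation over frequencies — i.e.\ upgrading the pointwise approximate formula for $\widehat{\mathcal{E}}_{1}(x)$ to something that can be raised to the $j$-th power and integrated with quantitatively decaying error, which is where the estimates from \cite{gath2017best} and the mean-square results will be invoked.
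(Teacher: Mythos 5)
Your proposal follows the paper's route: Theorem~$2$ is obtained by combining the weak convergence in Theorem~$1$ (applied to $\mathcal{F}(\alpha)=\alpha^{j}$, legitimate by the polynomial-growth version of \eqref{eq:1.1}) with an explicit evaluation of $\lim_{X\to\infty}\frac{1}{X}\int_{X}^{2X}\widehat{\mathcal{E}}_{1}^{\,j}(x)\,\textit{d}x$ given in Proposition~$4$, which proceeds exactly as you describe — the truncated Vorono\"{i}-type expansion from Proposition~$1$, multinomial expansion, isolation of the diagonal via $\mathbb{Q}$-linear independence of $\{\sqrt{m}:\mu^{2}(m)=1\}$, absolute convergence from $|\Xi(m,\ell)|\ll_{\ell}m^{-\ell+\varepsilon}$, and the same sign analysis at the end. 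The one ingredient you flag as the ``main obstacle'' but do not supply is supplied in the paper by the separation estimate (quoted from Wenguang, Lemma~$2.2$) that a nonvanishing signed sum $\sum_{i=1}^{j}\textit{e}_{i}\sqrt{m_{i}}$ with $m_{i}\leq Y$ satisfies $\big|\sum_{i}\textit{e}_{i}\sqrt{m_{i}}\big|\gg_{j}Y^{1/2-2^{j-2}}$; this is what lets one choose the truncation parameter $Y=X^{2^{2-j}}$ so that both the off-diagonal contribution and the tail of the expansion decay to zero as $X\to\infty$.
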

\noindent
\textbf{Remark.} There is a further distinction between the case $q=1$, and the higher dimensional case $q\geq3$ in the following aspect. For $q=1$, $\mathcal{P}_{1}(\alpha)$ is the probability density corresponding to the random series $\sum_{m=1}^{\infty}\upphi_{{\scriptscriptstyle1,m}}\big(X_{m}\big)$, where the $X_{m}$ are independent random variables uniformly distributed on the segment $[0,1]$, and the $\upphi_{{\scriptscriptstyle1,m}}(t)$ are real-valued continuous functions, periodic of period $1$, given by (see $\S3$)
\begin{equation*}
\upphi_{{\scriptscriptstyle1,m}}(t)=-\frac{\sqrt{2}}{\pi}\frac{\mu^{2}(m)}{m}\sum_{k=1}^{\infty}\frac{r_{{\scriptscriptstyle2}}\big(mk^{2}\big)}{k^{2}}\cos{(2\pi kt)}\,.
\end{equation*}
This is in sharp contrast to the higher dimensional case $q\geq3$, where the corresponding functions $\upphi_{{\scriptscriptstyle q,m}}(t)$ (see \cite{gathdistribution}, $\S2$ Theorem $4$) are aperiodic. Also, the presence of the factor $1/m$ in $\upphi_{{\scriptscriptstyle1,m}}(t)$, as apposed to $1/m^{3/4}$ in $\upphi_{{\scriptscriptstyle q,m}}(t)$ for $q\geq3$, is the reason for why we obtain the much stronger decay estimates \eqref{eq:1.2} compared to the higher dimensional case $q\geq3$.\\\\
\textbf{Notation and conventions.} The following notation will occur repeatedly throughout this paper. We use the Vinogradov asymptotic notation $\ll$, as well as the Big-$O$ notation. For positive quantities $X,Y>0$ we write $X\asymp Y$, to mean that $X\ll Y\ll X$. In addition, we define
\begin{equation*}
\begin{split}
&\textit{(1)}\quad r_{{\scriptscriptstyle2}}(m)=\sum_{a^{2}+b^{2}=m}1\quad;\quad\text{where the representation runs over }a,b\in\mathbb{Z}\,.\\\\
&\textit{(2)}\quad\mu(m)=\left\{
        \begin{array}{ll}
            1 & ;\, m=1\\\\
            (-1)^{\ell} & ;\, \text{if }m=p_{{\scriptscriptstyle1}}\cdots p_{{\scriptscriptstyle\ell}},\,\text{with } p_{{\scriptscriptstyle1}},\ldots,p_{{\scriptscriptstyle\ell}}\,\text{distinct primes}\\\\
            0 & ;\,\text{otherwise}
        \end{array}
    \right.\\\\
&\textit{(3)}\quad\psi(t)=t-[t]-1/2\,\,,\,\,\text{where }[t]=\text{max}\{m\in\mathbb{Z}: m\leq t\}\,.
\end{split}
\end{equation*}
\section{A Vorono\"{i}-type series expansion for $\mathcal{E}_{1}(x)/x^{2}$}
\noindent
In this section, we develop a Vorono\"{i}-type series expansion for $\mathcal{E}_{1}(x)/x^{2}$. The main result we shall set out to prove is the following.
\begin{prop}
Let $X>0$ be large. Then for $X\leq x\leq2X$ we have
\begin{equation}\label{eq:2.1}
\mathcal{E}_{1}(x)/x^{2}=-\frac{\sqrt{2}}{\pi}\sum_{m\,\leq\,X^{2}}\frac{r_{{\scriptscriptstyle 2}}(m)}{m}\cos{\big(2\pi\sqrt{m}x\big)}-2x^{-2}T\big(x^{2}\big)+O_{\epsilon}\big(X^{-1+\epsilon}\big)\,,
\end{equation}
for any $\epsilon>0$, where for $Y>0$, $T(Y)$ is given by
\begin{equation*}
T(Y)=\sum_{0\,\leq\,m\,\leq\,Y}r_{{\scriptscriptstyle2}}(m)\psi\Big(\big(Y^{2}-m^{2}\big)^{1/2}\Big)\,.
\end{equation*}
\end{prop}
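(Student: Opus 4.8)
The plan is to start from the exact count $N_1(x) = |\{\mathbf z \in \mathbb Z^3 : (z_1^2+z_2^2)^2 + z_3^2 \le x^4\}|$ and slice along the $z_3$-axis. For each integer $n$ with $|n| \le x^2$, the condition becomes $z_1^2 + z_2^2 \le (x^4 - n^2)^{1/2}$, a Euclidean disc of radius $R_n = (x^4-n^2)^{1/4}$, so $N_1(x) = \sum_{|n| \le x^2} A\big((x^4-n^2)^{1/2}\big)$ where $A(t) = |\{(a,b)\in\mathbb Z^2 : a^2+b^2 \le t\}| = \sum_{0\le m\le t} r_2(m)$. First I would invoke the classical Gauss circle estimate $A(t) = \pi t + O(t^{1/3})$, or more precisely write $A(t) = \pi t - 2\sqrt t\, \rho(\sqrt t) + (\text{small})$ via the truncated Hardy--Voronoï formula for $A(t)$; substituting and separating the main term $\pi(x^4-n^2)^{1/2}$ from the oscillatory part produces, after comparing $\sum_{|n|\le x^2}\pi(x^4-n^2)^{1/2}$ against its integral $\int_{-x^2}^{x^2}\pi(x^4-u^2)^{1/2}\,du + \text{(Euler--Maclaurin correction)}$, exactly the volume term $\mathrm{vol}(\mathcal B)x^4$ plus a term of the shape $-2\sum_{0\le m\le x^2} r_2(m)\psi\big((x^4-m^2)^{1/2}\big)$, i.e. $-2T(x^2)$ after folding the $\pm n$ symmetry (note $T(Y)$ is defined with $Y=x^2$ so $Y^2 = x^4$). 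Dividing by $x^2$ gives the $-2x^{-2}T(x^2)$ term in \eqref{eq:2.1}.

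Next I would handle the remaining oscillatory contribution, namely the sum over $n$ of the error $A\big((x^4-n^2)^{1/2}\big) - \pi(x^4-n^2)^{1/2}$. Using the truncated Voronoï summation formula for the circle problem, each such error term expands into a finite sum over $k \ge 1$ of Bessel-type terms $\asymp \sum_{k\le K} r_2(k) k^{-3/4} (x^4-n^2)^{-1/8}\cos\big(2\pi\sqrt{k(x^4-n^2)^{1/2}} - \pi/4\big)$ plus a tail. Summing this over $n$ and then converting the $n$-sum back to the "diagonal" via a second application of Poisson/Voronoï in the $n$-variable is what produces the clean leading series $-\frac{\sqrt 2}{\pi}\sum_{m\le X^2} r_2(m) m^{-1}\cos(2\pi\sqrt m\, x)$: the stationary-phase/diagonal contributions of the double sum (the terms where the two oscillations reinforce, forcing $k(x^4-n^2)^{1/2}$ to be near a perfect square $m x^2$) collapse to frequencies $\sqrt m\, x$, and tracking the constants through the Bessel asymptotics and the Gaussian-type $n$-integral $\int e(\sqrt{m(x^4-u^2)^{1/2}}/x\cdot\text{(stuff)})\,du$ yields the factor $\sqrt 2/\pi$ and the $r_2(m)/m$ weight. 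The off-diagonal terms, together with the tails from both truncations and the Euler--Maclaurin corrections, must all be shown to be $O_\epsilon(X^{-1+\epsilon})$ after dividing by $x^2$; here one uses standard bounds $r_2(m) \ll_\epsilon m^\epsilon$, the divisor-type bound $\sum_{m\le Y} r_2(m) \ll Y$, and a van der Corput / second-derivative estimate for the remaining exponential sums in $n$, choosing the truncation parameter $K \asymp X^{4}$ (equivalently the $m$-sum cutoff $X^2$) to balance main term length against error.

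The main obstacle, as I see it, is the precise bookkeeping in this double-summation step: one must apply Voronoï (or Poisson) twice — once in the circle-problem variable and once in the $z_3$-slicing variable — and carefully isolate the diagonal terms while proving square-root cancellation in the genuinely oscillatory off-diagonal sums, all uniformly for $X \le x \le 2X$. Getting the constant $\sqrt2/\pi$ exactly right requires careful stationary-phase analysis of $\int_{-x^2}^{x^2}(x^4-u^2)^{-1/8}\exp\big(\pm 2\pi i\sqrt{k(x^4-u^2)^{1/2}}\big)\,du$, whose stationary point is at $u=0$ and contributes a factor involving $(x^2)^{?}$ and $\sqrt 2$ from the second derivative of $(x^4-u^2)^{1/2}$ at the origin; the algebra there, combined with the Bessel normalization constant, is where errors are easiest to make. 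A secondary technical point is justifying that truncating the circle-problem Voronoï expansion at length $K$ introduces an error that, after summation over the $\asymp x^2$ values of $n$ and division by $x^2$, stays within $O_\epsilon(X^{-1+\epsilon})$ — this forces $K$ to be taken as large as $\asymp X^4$, so one needs the sharp form of the truncated formula with explicit error $\ll_\epsilon t^\epsilon (t/K)^{1/2}$ rather than a soft version.
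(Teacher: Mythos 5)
Your slicing is transposed relative to the paper's, and that transposition causes a genuine error in where you claim the term $-2T(x^2)$ comes from. You slice along $z_3=n$, getting $N_1(x)=\sum_{|n|\le x^2}A\bigl((x^4-n^2)^{1/2}\bigr)$ with $A$ the Gauss circle count, and then assert that comparing $\sum_{|n|\le x^2}\pi(x^4-n^2)^{1/2}$ with its integral via Euler--Maclaurin yields $\mathrm{vol}(\mathcal B)x^4-2T(x^2)$. That cannot be right: $T(x^2)=\sum_{0\le m\le x^2}r_2(m)\psi\bigl((x^4-m^2)^{1/2}\bigr)$ carries a $\psi$-correction in the $m$-variable weighted by $r_2(m)$, whereas Euler--Maclaurin applied to a sum over the integer variable $n$ can only produce boundary terms and a $\psi(n)$-type remainder in $n$ --- it has no mechanism to generate a weight $r_2(m)$. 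The paper instead slices along $m=z_1^2+z_2^2$ (Lemma 3): writing $N_1(x)=\sum_{m\ge0,\,m^2+n^2\le x^4}r_2(m)$ and counting, for each fixed $m$, the integers $n$ in $[-(x^4-m^2)^{1/2},(x^4-m^2)^{1/2}]$ immediately gives
\begin{equation*}
N_1(x)=2\sum_{0\le m\le x^2}r_2(m)\bigl(x^4-m^2\bigr)^{1/2}-2T\bigl(x^2\bigr),
\end{equation*}
from which $T(x^2)$ appears exactly and at once. Your slicing and the paper's are related by an interchange of the order of summation, but only the $m$-slicing produces the stated $T(x^2)$ cleanly; with the $n$-slicing you would first have to swap the orders, at which point you would just be reproducing the paper's decomposition.

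Beyond this, your plan to apply a truncated Voronoï expansion for the circle error $P(t)=A(t)-\pi t$ at each slice and then a second Poisson summation in $n$, isolating a stationary point at $u=0$, is considerably heavier than the paper's route and is where the constants are easiest to lose. After isolating $T(x^2)$, the paper avoids any truncated Voronoï for the circle problem and avoids a second Poisson step entirely: it Taylor-expands $(x^4-m^2)^{1/2}$ around $2x^2$ (still inside Lemma 3), reducing to the family $S_n(x^2)=\frac{(-1)^n}{n!}f^{(n)}(2x^2)\sum_{0\le m\le x^2}r_2(m)(x^2-m)^{n+1/2}$ with $f(y)=\sqrt y$. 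The $n\ge1$ pieces are controlled by the quoted bound $|R_n(Y)|\ll 2^{-n}Y^{1/2}$ (Lemma 2), and only the $n=0$ piece carries the main oscillation; it is handled by one application of Perron/Mellin with the Epstein zeta function $Z(s)=\sum r_2(m)m^{-s}$, its functional equation, and Bessel asymptotics (Lemma 1), which yields $-\frac{\sqrt2}{\pi}\sum_{m\le X^2}\frac{r_2(m)}{m}\cos(2\pi\sqrt m\,x)$ with the constant $\sqrt2/\pi$ falling out of $\mathcal J_{3/2}$ asymptotics and the prefactor $2^{3/2}$ from $R_0(Y)=2^{1/2}YR(Y)$, rather than from a stationary-phase computation. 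Before your approach could be salvaged, you would need to correct the origin of $T(x^2)$ and then carry out the delicate double-Voronoï bookkeeping to the precision of an $O_\epsilon(X^{-1+\epsilon})$ error; the paper's single Mellin-transform step sidesteps both difficulties.
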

\noindent
\textbf{Remark.} It is not difficult to show that $\big|x^{-2}T\big(x^{2}\big)\big|\ll x^{-\theta}$ for some $\theta>0$, and so one may replace this term by this bound which simplifies \eqref{eq:2.1}. However, we have chosen to retain the term $x^{-2}T\big(x^{2}\big)$ as we are going to show later on (see $\S3.1$) that its average order is much smaller.\\\\  
The proof of Proposition $1$ will be given in $\S2.2$. We shall first need to establish several results regarding weighted integer lattice points in Euclidean circles.
\subsection{Weighted integer lattice points in Euclidean circles}
We begin this subsection by proving Lemma $1$ stated below, which will then be combined with Lemma $2$ to prove Proposition $1$.
\begin{lemma}
For $Y>0$, let
\begin{equation*}
R(Y)=\sum_{0\,\leq\,m\,\leq\,Y}r_{{\scriptscriptstyle2}}(m)\bigg(1-\frac{m}{Y}\bigg)^{1/2}-\frac{2\pi}{3}Y\,.
\end{equation*}
Then
\begin{equation}\label{eq:2.2}
R(Y)=-\frac{1}{2\pi}\sum_{m\,\leq\,M}\frac{r_{{\scriptscriptstyle 2}}(m)}{m}\cos{\big(2\pi\sqrt{mY}\,\big)}+O_{\epsilon}\big(Y^{-1/2+\epsilon}\big)\,,
\end{equation}
for any $\epsilon>0$, where $M$ as any real number which satisfies $M\asymp Y$.
\end{lemma}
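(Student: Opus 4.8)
The plan is to express $R(Y)$ as a weighted lattice point discrepancy in a Euclidean disk and then apply the classical Hardy–Voronoï expansion for the circle problem with a smooth weight. Write $r_2(m) = \sum_{a^2+b^2=m} 1$, so that
\[
\sum_{0\le m\le Y} r_2(m)\Big(1-\frac{m}{Y}\Big)^{1/2} = \sum_{\substack{(a,b)\in\mathbb{Z}^2\\ a^2+b^2\le Y}} \Big(1-\frac{a^2+b^2}{Y}\Big)^{1/2} = \sum_{\mathbf{n}\in\mathbb{Z}^2} w\!\left(\frac{|\mathbf{n}|}{\sqrt{Y}}\right),
\]
where $w(u) = (1-u^2)^{1/2}_{+}$ is supported on $[0,1]$. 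The quantity $\frac{2\pi}{3}Y$ is exactly $Y\int_{\mathbb{R}^2} w(|\mathbf{u}|)\,d\mathbf{u} = Y\cdot 2\pi\int_0^1 u(1-u^2)^{1/2}\,du = \frac{2\pi}{3}Y$, i.e.\ the main term is the expected value predicted by volume. So $R(Y)$ is precisely the error in a smoothed Gauss circle problem.

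Next I would apply Poisson summation to $\sum_{\mathbf{n}} w(|\mathbf{n}|/\sqrt Y)$. The Fourier transform of the radial function $\mathbf{u}\mapsto w(|\mathbf{u}|)$ in $\mathbb{R}^2$ is a Hankel transform of order $0$, and the key classical fact is that $\int_0^1 (1-u^2)^{1/2} J_0(2\pi \xi u)\, u\, du$ evaluates to a Bessel expression of the form $c\, \xi^{-3/2} J_{3/2}(2\pi\xi)$ (up to normalization), and $J_{3/2}$ is elementary: $J_{3/2}(z) = \sqrt{2/(\pi z)}\big(\sin z/z - \cos z\big)$. After scaling, the $\mathbf{m}\neq 0$ contribution to Poisson summation becomes a sum over $\mathbf{m}\in\mathbb{Z}^2$ of terms proportional to $Y\cdot (\sqrt Y |\mathbf{m}|)^{-3} \big(\sin(2\pi\sqrt Y|\mathbf{m}|) - 2\pi\sqrt Y |\mathbf{m}|\cos(2\pi \sqrt Y |\mathbf{m}|)\big)$. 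Collecting by $m = |\mathbf{m}|^2$ (so the multiplicity is $r_2(m)$), the dominant $\cos$-term produces exactly
\[
-\frac{1}{2\pi}\sum_{m\ge 1}\frac{r_2(m)}{m}\cos\big(2\pi\sqrt{mY}\big),
\]
matching the claimed main term, while the $\sin$-term contributes $\ll \sum_m r_2(m) m^{-3/2} Y^{-1/2} \ll Y^{-1/2}$, absorbed into the error.

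The main obstacle is that the full Hankel/Poisson series over all $\mathbf{m}\neq 0$ does not converge absolutely (the terms decay like $r_2(m)/m$, not summably), so I cannot simply truncate at $M\asymp Y$ for free; I need to show the tail $\sum_{m>M} \frac{r_2(m)}{m}\cos(2\pi\sqrt{mY})$ is $O_\epsilon(Y^{-1/2+\epsilon})$. This is where the oscillation of the cosine must be exploited: partial summation against the known bound for $\sum_{m\le T} r_2(m) e(2\pi\sqrt{mY})$-type exponential sums (or, equivalently, a van der Corput / stationary-phase estimate for $\sum_{m\le T} r_2(m)\cos(2\pi\sqrt{mY})$), combined with $\sum_{m\le T} r_2(m) \ll T$, should give cancellation of the right size in the range $m > M \asymp Y$. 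An alternative, cleaner route is to insert a smooth truncation at scale $M$ directly into the test function before Poisson summation — i.e.\ replace $w$ by $w$ times a mollified cutoff in frequency — so that the dual sum is genuinely finite and the truncation error is controlled by the smoothness of $w$ away from $u=1$ together with a separate treatment of the mild singularity of $(1-u^2)^{1/2}$ at the endpoint (its derivative blows up like $(1-u)^{-1/2}$, which is still integrable and costs only an extra $Y^{-1/2+\epsilon}$). Since the statement only asserts the truncated identity up to $O_\epsilon(Y^{-1/2+\epsilon})$, either route suffices; I would carry out the smooth-truncation version, handle the endpoint singularity by a dyadic decomposition near $u=1$, and verify that each piece contributes within the allowed error. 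The error term $Y^{-1/2+\epsilon}$ (rather than a cleaner power) reflects exactly this endpoint analysis and the divisor-type bound $r_2(m)\ll m^\epsilon$.
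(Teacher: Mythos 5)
Your Poisson/Hankel-transform approach is conceptually sound and genuinely different from the paper's proof, which instead uses a Mellin–Barnes integral, the functional equation of the Epstein zeta function $Z(s)$, and contour shifting. The two are of course dual to one another: the Mellin-transform argument with a finite integration height $T=\pi\sqrt{MY}$ is precisely a disguised, pre-truncated Poisson summation. The advantage the paper's route buys is exactly at the point you flag as your main obstacle: in the Mellin picture, the truncated series over $m\leq M$ appears \emph{directly} (from estimating the contour integral $\text{\Large J}_m$ separately for $m\leq M$ and $m>M$), so one never has to estimate the tail of the conditionally convergent Vorono\"{\i} series. In your picture, you must either smooth-truncate before Poisson (your route (b)) or bound the raw tail (your route (a)).

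The gap in the proposal lies in both of your suggested patches being left as sketches, and route (a) in particular is shakier than you present it. You invoke ``the known bound for $\sum_{m\leq T}r_2(m)e(2\pi\sqrt{mY})$-type exponential sums,'' but such sums do not come with a ready-made uniform estimate: for $m\asymp Y$ the phase spacings $\sqrt{(m+1)Y}-\sqrt{mY}$ are $\asymp 1$, so no cancellation is cheap, and for $m\gg Y$ the spacings shrink and you are back to analysing an exponential sum whose estimation is essentially the circle problem itself. Getting the tail down to $O_\epsilon(Y^{-1/2+\epsilon})$ is the whole content of the lemma, not a side estimate. Route (b) (mollified cutoff plus dyadic decomposition near $u=1$) is the viable direction and is morally what the Mellin argument automates, but you have not carried out the endpoint analysis; in particular you would need to verify that the $(1-u)^{-1/2}$ singularity of $w'$ costs exactly a factor compatible with the claimed $Y^{-1/2+\epsilon}$, and that the smoothing window can be taken narrow enough to truncate the dual sum at $M\asymp Y$ without introducing a larger error. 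Until one of these is executed, the proof has a genuine hole at its most delicate step; I would suggest either completing the mollification carefully, or switching to the Mellin/functional-equation framework, where the truncation at $T\asymp\sqrt{MY}$ and the split $m\lessgtr M$ handle this for free.
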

\begin{proof}
Let $Y>0$ be large, $T\asymp Y$ a parameter to be specified later, and set $\delta=\frac{1}{\log{Y}}$. Write $\upphi$ for the continuous function on $\mathbb{R}_{>0}$ defined by $\upphi(t)=(1-t)^{1/2}$ if $t\in(0,1]$, and $\upphi(t)=0$ otherwise. We have
\begin{equation}\label{eq:2.3}
\begin{split}
&\sum_{0\,\leq\,m\,\leq\,Y}r_{{\scriptscriptstyle2}}(m)\bigg(1-\frac{m}{Y}\bigg)^{1/2}=1+\sum_{m=1}^{\infty}r_{{\scriptscriptstyle2}}(m)\upphi\bigg(\frac{m}{Y}\bigg)=1+\frac{1}{2\pi i}\int\limits_{1+\delta-i\infty}^{1+\delta+i\infty}Z(s)\check{\upphi}(s)Y^{s}\textit{d}s\\
&=1+\frac{1}{2\pi i}\int\limits_{1+\delta-iT}^{1+\delta+iT}Z(s)\check{\upphi}(s)Y^{s}\textit{d}s+O\bigg(YT^{-3/2}\sum_{m=1}^{\infty}\frac{r_{{\scriptscriptstyle2}}(m)}{m^{1+\delta}}\bigg(1+\text{min}\bigg\{T,\frac{1}{|\log{\frac{Y}{m}}|}\bigg\}\bigg)\bigg)\\
&=1+\frac{1}{2\pi i}\int\limits_{1+\delta-iY}^{1+\delta+iY}Z(s)\check{\upphi}(s)Y^{s}\textit{d}s+O_{\epsilon}\big(Y^{-1/2+\epsilon}\big)\,,
\end{split}
\end{equation}
where $\check{\upphi}(s)=\frac{\Gamma(s)\Gamma(3/2)}{\Gamma(s+3/2)}$ is the Mellin transform of $\upphi$, and $Z(s)=\sum_{m=1}^{\infty}r_{{\scriptscriptstyle2}}(m)m^{-s}$ with $\Re(s)>1$. In estimating \eqref{eq:2.3}, we have made use of Stirling's asymptotic formula for the Gamma function (see \cite{iwaniec2004analytic}, A.4 (5.113))
\begin{equation}\label{eq:2.4}
\Gamma(\sigma+it)=\sqrt{2\pi}(it)^{\sigma-\frac{1}{2}}e^{-\frac{\pi}{2}|t|}\bigg(\frac{|t|}{e}\bigg)^{it}\bigg(1+O\bigg(\frac{1}{|t|}\bigg)\bigg)\,,
\end{equation}
valid uniformly for $\alpha<\sigma<\beta$ with any fixed $\alpha,\beta\in\mathbb{R}$, provided $|t|$ is large enough in terms of $\alpha$ and $\beta$.\\
The Zeta function $Z(s)$, initially defined for $\Re(s)>1$, admits an analytic continuation to the entire complex plane, except at $s=1$ where it has a simple pole with residue $\pi$, and satisfies the functional equation \cite{epstein1903}
\begin{equation}\label{eq:2.5}
\pi^{-s}\Gamma(s)Z(s)=\pi^{-(1-s)}\Gamma(1-s)Z(1-s)\,.
\end{equation}
Now, $s(s-1)Z(s)\check{\upphi}(s)Y^{s}$ is regular in the strip $-\delta\leq\Re(s)\leq 1+\delta$, and by Stirling's asymptotic formula \eqref{eq:2.4} together with the functional equation \eqref{eq:2.5}, we obtain the bounds
\begin{equation}\label{eq:2.6}
\begin{split}
&\big|s(s-1)Z(s)\check{\upphi}(s)Y^{s}\big|\ll Y(\log{Y})\big(1+|s|\big)^{1/2}\quad\,\,\,\,;\quad\Re(s)=1+\delta\\
&\big|s(s-1)Z(s)\check{\upphi}(s)Y^{s}\big|\ll (\log{Y})\big(1+|s|\big)^{3/2+2\delta}\quad;\quad\Re(s)=-\delta\,.
\end{split}
\end{equation}
On recalling that $T\asymp Y$, it follows from the Phragm{\'e}n-{L}indel{\"o}f principle that
\begin{equation}\label{eq:2.7}
\begin{split}
\big|Z(s)\check{\upphi}(s)Y^{s}\big|&\ll(\log{Y}) T^{\delta-1/2}\bigg(\frac{Y}{T}\bigg)^{\sigma}\\
&\ll Y^{-1/2}\log{Y}\quad;\qquad-\delta\leq\sigma=\Re(s)\leq1+\delta\,\,,\,\,|\Im(s)|=T\,.
\end{split}
\end{equation}
Moving the line of integration to $\Re(s)=-\delta$, and using \eqref{eq:2.7}, we have by the theorem of residues
\begin{equation}\label{eq:2.8}
\begin{split}
\frac{1}{2\pi i}\int\limits_{1+\delta-iY}^{1+\delta+iY}Z(s)\check{\upphi}(s)Y^{s}\textit{d}s&=\bigg\{\underset{s=1}{\text{Res}}+\underset{s=0}{\text{Res}}\bigg\}Z(s)\check{\upphi}(s)Y^{s}+\frac{1}{2\pi i}\int\limits_{-\delta-iY}^{-\delta+iY}Z(s)\check{\upphi}(s)Y^{s}\textit{d}s\\
&\,\,\,\,\,\,+O\big(Y^{-1/2}\log{Y}\big)\\
&=\frac{2\pi}{3}Y-1+\frac{1}{2\pi i}\int\limits_{-\delta-iY}^{-\delta+iY}Z(s)\check{\upphi}(s)Y^{s}\textit{d}s+O\big(Y^{-1/2}\log{Y}\big)\,.
\end{split}
\end{equation}
Inserting \eqref{eq:2.8} into the the RHS of \eqref{eq:2.3}, and applying the functional equation \eqref{eq:2.5}, we arrive at
\begin{equation}\label{eq:2.9}
R(Y)=\frac{1}{2\sqrt{\pi}}\sum_{m=1}^{\infty}\frac{r_{{\scriptscriptstyle2}}(m)}{m}\text{\Large{J}}_{m}+O_{\epsilon}\big(Y^{-1/2+\epsilon}\big)\,,
\end{equation}
where
\begin{equation}\label{eq:2.10}
\text{\Large{J}}_{m}=\frac{1}{2\pi i}\int\limits_{-\delta-iT}^{-\delta+iT}\frac{\Gamma(1-s)\big(\pi^{2}mY\big)^{s}}{\Gamma(s+3/2)}\textit{d}s\,.
\end{equation}
Now, let $M>0$ be a real number which satisfies $M\asymp Y$. We then specify the parameter $T$ by making the choice $T=\pi\sqrt{MY}$. Clearly, we have $T\asymp Y$. We are going to estimate $\text{\Large{J}}_{m}$ separately for $m\leq M$ and $m>M$.\\
Suppose first that $m>M$. We have 
\begin{equation}\label{eq:2.11}
\begin{split}
\bigg|\frac{\Gamma(1-s)\big(\pi^{2}mY\big)^{s}}{\Gamma(s+3/2)}\bigg|&\ll T^{-1/2}\big(mT^{-2}\pi^{2}Y\big)^{\sigma}\\
&=T^{-1/2}\bigg(\frac{m}{M}\bigg)^{\sigma}\\
&\ll Y^{-1/2}m^{-\delta}\quad;\qquad-\frac{1}{4}\leq\sigma=\Re(s)\leq-\delta\,\,,\,\,|\Im(s)|=T\,.
\end{split}
\end{equation}
Moving the line of integration to $\Re(s)=-\frac{1}{4}$, and using \eqref{eq:2.11}, we obtain
\begin{equation}\label{eq:2.12}
\text{\Large{J}}_{m}=\frac{1}{2\pi i}\int\limits_{-\frac{1}{4}-iT}^{-\frac{1}{4}+iT}\frac{\Gamma(1-s)\big(\pi^{2}mY\big)^{s}}{\Gamma(s+3/2)}\textit{d}s+O\big(Y^{-1/2}m^{-\delta}\big)\,.
\end{equation}
By Stirling's asymptotic formula \eqref{eq:2.4} it follows that
\begin{equation}\label{eq:2.13}
\big|\text{\Large{J}}_{m}\big|\ll\big(mY\big)^{-1/4}\bigg\{1+\bigg|\int\limits_{1}^{T}\exp{\big(if_{{\scriptscriptstyle m}}(t)\big)}\textit{d}t\bigg|\,\bigg\}+Y^{-1/2}m^{-\delta}\,,
\end{equation}
where $f_{{\scriptscriptstyle m}}(t)=-2t\log{t}+2t +t\log{\big(\pi^{2}mY\big)}$. Trivial integration and integration by parts give
\begin{equation}\label{eq:2.14}
\bigg|\int\limits_{1}^{T}\exp{\big(if_{{\scriptscriptstyle m}}(t)\big)}\textit{d}t\bigg|\ll\text{min}\bigg\{T,\frac{1}{\log{\frac{m}{M}}}\bigg\}\,.
\end{equation}
Inserting \eqref{eq:2.14} into \eqref{eq:2.13}, and then summing over all $m>M$, we obtain
\begin{equation}\label{eq:2.15}
\begin{split}
\sum_{m>M}\frac{r_{{\scriptscriptstyle2}}(m)}{m}\big|\text{\Large{J}}_{m}\big|&\ll Y^{-1/4}\sum_{m>M}\frac{r_{{\scriptscriptstyle2}}(m)}{m^{5/4}}\bigg(1+\text{min}\bigg\{T,\frac{1}{\log{\frac{m}{M}}}\bigg\}\bigg)+Y^{-1/2}\log{Y}\\
&\ll_{\epsilon} Y^{-1/2+\epsilon}\,.
\end{split}
\end{equation}
Inserting \eqref{eq:2.15} into \eqref{eq:2.9}, we arrive at
\begin{equation}\label{eq:2.16}
R(Y)=\frac{1}{2\sqrt{\pi}}\sum_{m\leq M}\frac{r_{{\scriptscriptstyle2}}(m)}{m}\text{\Large{J}}_{m}+O_{\epsilon}\big(Y^{-1/2+\epsilon}\big)\,.
\end{equation}
It remains to estimate $\text{\Large{J}}_{m}$ for $m\leq M$. We have 
\begin{equation}\label{eq:2.17}
\begin{split}
\bigg|\frac{\Gamma(1-s)\big(\pi^{2}mY\big)^{s}}{\Gamma(s+3/2)}\bigg|&\ll T^{-1/2}\big(mT^{-2}\pi^{2}Y\big)^{\sigma}\\
&=T^{-1/2}\bigg(\frac{m}{M}\bigg)^{\sigma}\\
&\ll Y^{-1/2}\quad;\qquad-\delta\leq\sigma=\Re(s)\leq1-\delta\,\,,\,\,|\Im(s)|=T\,.
\end{split}
\end{equation}
Moving the line of integration to $\Re(s)=1-\delta$, and using \eqref{eq:2.17}, we obtain
\begin{equation}\label{eq:2.18}
\text{\Large{J}}_{m}=\frac{1}{2\pi i}\int\limits_{1-\delta-iT}^{1-\delta+iT}\frac{\Gamma(1-s)\big(\pi^{2}mY\big)^{s}}{\Gamma(s+3/2)}\textit{d}s+O\big(Y^{-1/2}\big)\,.
\end{equation}
Extending the integral all the way to $\pm\infty$, by Stirling's asymptotic formula \eqref{eq:2.4} we have
\begin{equation}\label{eq:2.19}
\begin{split}
\frac{1}{2\pi i}&\int\limits_{1-\delta-iT}^{1-\delta+iT}\frac{\Gamma(1-s)\big(\pi^{2}mY\big)^{s}}{\Gamma(s+3/2)}\textit{d}s\\
&=\frac{1}{2\pi i}\int\limits_{1-\delta-i\infty}^{1-\delta+i\infty}\frac{\Gamma(1-s)\big(\pi^{2}mY\big)^{s}}{\Gamma(s+3/2)}\textit{d}s+O\bigg(Y^{2}\bigg|\int\limits_{T}^{\infty}t^{-5/2+2\delta}\exp{\big(if_{{\scriptscriptstyle m}}(t)\big)}\textit{d}t\bigg|+Y^{-1/2}\bigg)\,,
\end{split}
\end{equation}
where $f_{{\scriptscriptstyle m}}(t)$ is defined as before. Trivial integration and integration by parts give
\begin{equation}\label{eq:2.20}
\bigg|\int\limits_{T}^{\infty}t^{-5/2+2\delta}\exp{\big(if_{{\scriptscriptstyle m}}(t)\big)}\textit{d}t\bigg|\ll T^{-5/2}\text{min}\bigg\{T,\frac{1}{\log{\frac{M}{m}}}\bigg\}\,.
\end{equation}
Inserting \eqref{eq:2.19} into the RHS of \eqref{eq:2.18}, we obtain by \eqref{eq:2.20}
\begin{equation}\label{eq:2.21}
\text{\Large{J}}_{m}=\frac{1}{2\pi i}\int\limits_{1-\delta-i\infty}^{1-\delta+i\infty}\frac{\Gamma(1-s)\big(\pi^{2}mY\big)^{s}}{\Gamma(s+3/2)}\textit{d}s+O\bigg(Y^{-1/2}\bigg(1+\text{min}\bigg\{T,\frac{1}{\log{\frac{m}{M}}}\bigg\}\bigg)\bigg)
\end{equation}
Moving the line of integration in \eqref{eq:2.21} to $\Re(s)=N+1/2$ with $N\geq1$ an integer, and then letting $N\to\infty$, we have by the theorem of residues
\begin{equation}\label{eq:2.22}
\frac{1}{2\pi i}\int\limits_{1-\delta-i\infty}^{1-\delta+i\infty}\frac{\Gamma(1-s)\big(\pi^{2}mY\big)^{s}}{\Gamma(s+3/2)}\textit{d}s=\big(\pi\sqrt{mY}\,\big)^{1/2}\mathcal{J}_{{\scriptscriptstyle3/2}}\big(2\pi\sqrt{mY}\,\big)\,,
\end{equation}
where for $\nu>0$, the Bessel function $\mathcal{J}_{{\scriptscriptstyle\nu}}$ of order $\nu$ is defined by
\begin{equation*}
\mathcal{J}_{{\scriptscriptstyle3/2}}(y)=\sum_{k=0}^{\infty}\frac{(-1)^{k}}{k!\Gamma(k+1+\nu)}\bigg(\frac{y}{2}\bigg)^{\nu+2k}\,.
\end{equation*}
We have the following asymptotic estimate for the Bessel function (see \cite{iwaniec2002spectral}, B.4 (B.35)). For fixed $\nu>0$,
\begin{equation}\label{eq:2.23}
\mathcal{J}_{\nu}(y)=\bigg(\frac{2}{\pi y}\bigg)^{1/2}\cos{\bigg(y-\frac{1}{2}\nu\pi-\frac{1}{4}\pi\bigg)}+O\bigg(\frac{1}{y^{3/2}}\bigg)\,,\text{ as } y\to\infty\,.
\end{equation}
Inserting \eqref{eq:2.22} into the RHS of \eqref{eq:2.21}, we have by \eqref{eq:2.23}
\begin{equation}\label{eq:2.24}
\text{\Large{J}}_{m}=-\frac{1}{\sqrt{\pi}}\cos{\big(2\pi\sqrt{mY}\,\big)}+O\bigg(Y^{-1/2}\bigg(1+\text{min}\bigg\{T,\frac{1}{\log{\frac{m}{M}}}\bigg\}\bigg)\bigg)\,.
\end{equation}
Summing over all $m\leq M$, we obtain
\begin{equation}\label{eq:2.25}
\begin{split}
\sum_{m\leq M}&\frac{r_{{\scriptscriptstyle2}}(m)}{m}\text{\Large{J}}_{m}\\
&=-\frac{1}{\sqrt{\pi}}\sum_{m\leq M}\frac{r_{{\scriptscriptstyle2}}(m)}{m}\cos{\big(2\pi\sqrt{mY}\,\big)}+O\bigg(Y^{-1/2}\sum_{m\leq M}\frac{r_{{\scriptscriptstyle2}}(m)}{m}\bigg(1+\text{min}\bigg\{T,\frac{1}{\log{\frac{m}{M}}}\bigg\}\bigg)\bigg)\\
&=-\frac{1}{\sqrt{\pi}}\sum_{m\leq M}\frac{r_{{\scriptscriptstyle2}}(m)}{m}\cos{\big(2\pi\sqrt{mY}\,\big)}+O_{\epsilon}\big(Y^{-1/2+\epsilon}\big)\,.
\end{split}
\end{equation}
Finally, inserting \eqref{eq:2.25} into the RHS of \eqref{eq:2.16}, we arrive at
\begin{equation}\label{eq:2.26}
R(Y)=-\frac{1}{2\pi}\sum_{m\,\leq\,M}\frac{r_{{\scriptscriptstyle 2}}(m)}{m}\cos{\big(2\pi\sqrt{mY}\,\big)}+O_{\epsilon}\big(Y^{-1/2+\epsilon}\big)\,.
\end{equation}
This concludes the proof.
\end{proof}
\noindent
We need an additional result regarding weighted integer lattice points in Euclidean circles. First, we make the following definition.
\begin{mydef}
For $Y>0$ and $n\geq1$ an integer, define
\begin{equation*}
\begin{split}
S_{{\scriptscriptstyle n}}(Y)=\frac{(-1)^{n}}{n!}f^{(n)}\big(2Y\big)\sum_{0\,\leq\,m\,\leq\,Y}r_{{\scriptscriptstyle 2}}(m)\big(Y-m\big)^{n+1/2}\,,
\end{split}
\end{equation*}
with $f(y)=\sqrt{y}$, and let
\begin{equation*}
R_{{\scriptscriptstyle n}}(Y)=S_{{\scriptscriptstyle n}}(Y)-c_{{\scriptscriptstyle n}}Y^{2}\,,
\end{equation*}
where $c_{{\scriptscriptstyle0}}=\frac{2^{3/2}\pi}{3}$, $c_{{\scriptscriptstyle1}}=-\frac{\pi}{5\sqrt{2}}$, and $c_{{\scriptscriptstyle n}}=-\pi\frac{\prod_{k=1}^{n-1}\big(1-\frac{1}{2k}\big)}{\sqrt[]{2}\,2^{n}n\big(n+\frac{3}{2}\big)}$ for $n\geq2$.
\end{mydef}
\noindent
We quote the following result (see \cite{gath2017best}, Lemma $2.1$). Here, we shall only need that part of the lemma which concerns the case $n\geq1$. The case $n=0$ will be treated by Lemma $1$ as we shall see later.
\begin{lemma}[\cite{gath2017best}]
For $n\geq1$ an integer, the error term $R_{{\scriptscriptstyle n}}(Y)$ satisfies the bound
\begin{equation}\label{eq:2.27}
\big|R_{{\scriptscriptstyle n}}(Y)\big|\ll2^{-n}Y^{1/2}\,,
\end{equation}
where the implied constant is absolute.
\end{lemma}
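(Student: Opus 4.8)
This is Lemma $2.1$ of \cite{gath2017best} restricted to $n\ge1$; here is the line of argument I would follow. The plan is to treat the weighted sum $\sum_{0\le m\le Y}r_{{\scriptscriptstyle2}}(m)(Y-m)^{n+1/2}$ by a Mellin--Vorono\"i argument of exactly the type used for Lemma $1$: express it as a contour integral involving $Z(s)$, shift the contour to peel off the polynomial main term (which should match $c_{{\scriptscriptstyle n}}Y^{2}$ after multiplication by $\tfrac{(-1)^{n}}{n!}f^{(n)}(2Y)$), and then recognise the remainder as an explicit Bessel series whose size can be controlled uniformly in $n$.

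First I would introduce the continuous function $\upphi_{{\scriptscriptstyle n}}$ with $\upphi_{{\scriptscriptstyle n}}(t)=(1-t)^{n+1/2}$ for $t\in(0,1]$ and $\upphi_{{\scriptscriptstyle n}}(t)=0$ otherwise, whose Mellin transform is the Beta integral $\check\upphi_{{\scriptscriptstyle n}}(s)=\Gamma(s)\Gamma(n+3/2)/\Gamma(s+n+3/2)$, decaying like $|s|^{-(n+3/2)}$ on vertical lines. Isolating the term $m=0$, for which $r_{{\scriptscriptstyle2}}(0)=1$, one gets for $c>1$
\[
\sum_{0\le m\le Y}r_{{\scriptscriptstyle2}}(m)(Y-m)^{n+1/2}=Y^{n+1/2}+\frac{Y^{n+1/2}}{2\pi i}\int\limits_{c-i\infty}^{c+i\infty}Z(s)\check\upphi_{{\scriptscriptstyle n}}(s)Y^{s}\,\textit{d}s,
\]
the interchange of sum and integral being legitimate by absolute convergence. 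I would then move the line of integration to $\Re(s)=-1/2$ (the horizontal pieces vanishing in the limit by Stirling \eqref{eq:2.4}, since $n\ge1$), crossing the simple pole of $Z$ at $s=1$, with residue $\pi\check\upphi_{{\scriptscriptstyle n}}(1)=\pi/(n+3/2)$, and the simple pole of $\Gamma(s)$ at $s=0$, with residue $Z(0)=-1$. The first residue contributes $\tfrac{\pi}{n+3/2}Y^{n+3/2}$ and the second contributes $-Y^{n+1/2}$, which exactly cancels the isolated $m=0$ term. Multiplying through by $\tfrac{(-1)^{n}}{n!}f^{(n)}(2Y)=(-1)^{n}\binom{1/2}{n}(2Y)^{1/2-n}$ and simplifying with the identities $\Gamma(z+1)=z\Gamma(z)$ and the explicit form of $\binom{1/2}{n}$, one checks that the polynomial main term is precisely $c_{{\scriptscriptstyle n}}Y^{2}$ --- the three clauses defining $c_{{\scriptscriptstyle n}}$ for $n=0$, $n=1$, $n\ge2$ being just convenient rewritings of $(-1)^{n}\binom{1/2}{n}2^{1/2-n}\pi/(n+3/2)$. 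In particular the factor $2^{-n}$ appearing in $R_{{\scriptscriptstyle n}}(Y)$ comes entirely from the $(2Y)^{1/2-n}$ produced by the $n$-th derivative of $f(y)=\sqrt{y}$.

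What then remains is $R_{{\scriptscriptstyle n}}(Y)=(-1)^{n}\binom{1/2}{n}(2Y)^{1/2-n}Y^{n+1/2}\cdot\frac{1}{2\pi i}\int_{(-1/2)}Z(s)\check\upphi_{{\scriptscriptstyle n}}(s)Y^{s}\,\textit{d}s$. Invoking the functional equation \eqref{eq:2.5} and then moving the line to the right and letting it tend to $+\infty$ --- as in the steps \eqref{eq:2.21}--\eqref{eq:2.23} of the proof of Lemma $1$ --- evaluates the integral, giving
\[
R_{{\scriptscriptstyle n}}(Y)=(-1)^{n}\binom{1/2}{n}(2Y)^{1/2-n}\frac{\Gamma(n+3/2)}{\pi^{n+1/2}}Y^{(n+3/2)/2}\sum_{m\ge1}\frac{r_{{\scriptscriptstyle2}}(m)}{m^{(n+3/2)/2}}\mathcal{J}_{{\scriptscriptstyle n+3/2}}\big(2\pi\sqrt{mY}\big),
\]
the series converging absolutely for $n\ge1$. (One can reach the same series more elementarily by iterated Vorono\"i summation against $\sum_{m\le t}r_{{\scriptscriptstyle2}}(m)=\pi t+O(t^{1/3})$.) To finish I would bound the series together with its prefactor. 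Since $|\binom{1/2}{n}|\asymp n^{-3/2}$ and $\Gamma(n+3/2)\asymp n!\,\sqrt{n}$, the constant multiplying the series is $\asymp 2^{-n}\pi^{-n}(n-1)!$ (up to powers of $Y$), and one splits the series at $m\asymp n^{2}/Y$: for the small $m$ one uses the uniform bound $|\mathcal{J}_{{\scriptscriptstyle\nu}}(y)|\le(y/2)^{\nu}/\Gamma(\nu+1)$, immediate from the integral representation $\mathcal{J}_{{\scriptscriptstyle\nu}}(y)=\tfrac{(y/2)^{\nu}}{\sqrt{\pi}\,\Gamma(\nu+1/2)}\int_{-1}^{1}e^{iyt}(1-t^{2})^{\nu-1/2}\textit{d}t$, while for the large $m$ one uses $|\mathcal{J}_{{\scriptscriptstyle\nu}}(y)|\ll y^{-1/3}$ together with $|\mathcal{J}_{{\scriptscriptstyle\nu}}(y)|\ll y^{-1/2}$ for $y\gg\nu$; the super-exponential factors $(n-1)!$, $\pi^{-n}$, the powers of $Y$ and the Bessel decay then balance to leave $|R_{{\scriptscriptstyle n}}(Y)|\ll2^{-n}Y^{1/2}$ with an absolute implied constant.

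The main obstacle is precisely this last uniformity in $n$: one has to carry the factor $\Gamma(n+3/2)/\Gamma(s+n+3/2)$ faithfully through the two contour shifts and apply uniform asymptotics for $\mathcal{J}_{{\scriptscriptstyle\nu}}$ across the transition $y\asymp\nu=n+3/2$, so that the competing super-exponential growth in $n$ cancels down to the single factor $2^{-n}$. By contrast, the extraction of the main term and its identification with $c_{{\scriptscriptstyle n}}Y^{2}$, while a little fiddly with the Gamma identities, is entirely routine.
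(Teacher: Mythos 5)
The paper does not actually prove Lemma $2$; it quotes Lemma $2.1$ of \cite{gath2017best} verbatim and, in the accompanying remark, merely indicates that ``the proof of Lemma $2$ goes along the same line as the proof of Lemma $1$, where in fact the proof is much simpler.'' So your proposal cannot be matched against an in-paper proof, only against that remark --- and it is consistent with it: you run the same Mellin inversion, residue extraction, functional equation and Bessel-series apparatus as in Lemma $1$. Your residue computation is correct, and the identification of the main term is right: the residue at $s=1$ is $\pi\check\upphi_{{\scriptscriptstyle n}}(1)Y=\frac{\pi}{n+3/2}Y$, the residue at $s=0$ is $Z(0)=-1$ and cancels the isolated $m=0$ term, and multiplying by $\frac{(-1)^{n}}{n!}f^{(n)}(2Y)=(-1)^{n}\binom{1/2}{n}(2Y)^{1/2-n}$ gives $(-1)^{n}\binom{1/2}{n}\frac{2^{1/2-n}\pi}{n+3/2}Y^{2}$, which one checks (using $(-1)^{n}\binom{1/2}{n}=-\frac{1}{2n}\prod_{k=1}^{n-1}(1-\frac{1}{2k})$) agrees with all three clauses of the definition of $c_{{\scriptscriptstyle n}}$. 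The likely sense in which the referenced proof is ``much simpler'' than that of Lemma $1$ is also present in your sketch, if implicitly: for $n\geq1$ the Mellin transform $\Gamma(s)\Gamma(n+3/2)/\Gamma(s+n+3/2)$ decays at least like $|s|^{-5/2}$, so the Bessel series you obtain converges absolutely and no truncated-contour/stationary-phase argument is needed --- in particular, the conditional-convergence difficulties that force the elaborate treatment in Lemma $1$ disappear.

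The one substantive concern is the one you yourself flag: uniformity in $n$. Your proposed bound requires the super-exponential prefactor $\binom{1/2}{n}\Gamma(n+3/2)(2\pi)^{-n}\asymp(n-1)!(2\pi)^{-n}$ to be beaten down to $2^{-n}$ by cancellations coming from uniform Bessel asymptotics in all three regimes $y\ll\nu$, $y\asymp\nu$, $y\gg\nu$ with $\nu=n+3/2$, and also requires the horizontal truncation errors on the contour shift to be controlled uniformly in $n$ (for $n=1$ the integrand decays only like $T^{-1/2}$ on the horizontal pieces, so it is a borderline but adequate case). You acknowledge this but do not carry it out, and it is precisely the place where an error could hide; a careful proof would need to make the split at $m\asymp n^{2}/Y$ quantitative and track the $n$-dependence of every implied constant. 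With that caveat, your sketch is a plausible blind reconstruction of the argument, in line with what the paper's remark says the proof looks like.
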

\noindent
\textbf{Remark.} The proof of Lemma $2$ goes along the same line as the proof of Lemma $1$, where in fact the proof is much simpler in this case. Moreover, one can show that the upper bound estimate \eqref{eq:2.27} is sharp for $n=1$. For $n\geq2$, the estimates are no longer sharp, but they will more then suffice for our needs. 
\subsection{A decomposition identity for $N_{1}(x)$ and proof of Proposition $1$} We have everything we need for the proof of Proposition $1$. Before presenting the proof, we need the following decomposition identity for $N_{1}(x)$ which we prove in Lemma $3$ below.
\begin{lemma}
Let $x>0$. We have
\begin{equation}\label{eq:2.28}
\begin{split}
N_{1}(x)&=2\sum_{n=0}^{\infty}S_{{\scriptscriptstyle n}}\big(x^{2}\big)-2T\big(x^{2}\big)\,,
\end{split}
\end{equation}
where $T\big(x^{2}\big)$ is defined as in Proposition $1$. 
\end{lemma}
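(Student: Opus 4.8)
The plan is to count the lattice points $\mathbf{z}=(z_1,z_2,z_3)\in\mathbb{Z}^3$ with $|\mathbf{z}|_{Cyg}\le x$ by first fixing the pair $(z_1,z_2)$ and then counting the admissible values of $z_3$. Writing $m=z_1^2+z_2^2$, the constraint $|\mathbf{z}|_{Cyg}\le x$ reads $m^2+z_3^2\le x^4$, i.e. $|z_3|\le\big(x^4-m^2\big)^{1/2}$, which requires $0\le m\le x^2$. For a real parameter $w\ge 0$ the number of integers $k$ with $|k|\le w$ is $2[w]+1=2w-2\psi(w)$. Summing over all $(z_1,z_2)$ with $z_1^2+z_2^2=m$ and then over $m$ gives
\begin{equation*}
N_1(x)=\sum_{0\le m\le x^2}r_2(m)\Big(2\big(x^4-m^2\big)^{1/2}-2\psi\big((x^4-m^2)^{1/2}\big)\Big)=2\sum_{0\le m\le x^2}r_2(m)\big(x^4-m^2\big)^{1/2}-2T\big(x^2\big),
\end{equation*}
so the $-2T(x^2)$ term falls out immediately and what remains is to identify $\sum_{0\le m\le Y}r_2(m)(Y^2-m^2)^{1/2}$ (with $Y=x^2$) with $\sum_{n\ge0}S_n(Y)$.

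For that identification I would use the Taylor expansion of $f(y)=\sqrt{y}$ about the point $y=2Y$. Factor $Y^2-m^2=(Y-m)(Y+m)=(Y-m)\big(2Y-(Y-m)\big)$, so with $t=Y-m\in[0,Y]$ we have $(Y^2-m^2)^{1/2}=\big(t\,(2Y-t)\big)^{1/2}=t^{1/2}f(2Y-t)$. Expanding $f(2Y-t)=\sum_{n\ge0}\frac{(-t)^n}{n!}f^{(n)}(2Y)$ (the Taylor series of $\sqrt{\cdot}$ about $2Y$, which converges for $0\le t\le Y<2Y$ since $|t|\le Y<2Y$ is within the radius of convergence) yields termwise
\begin{equation*}
\big(Y^2-m^2\big)^{1/2}=\sum_{n=0}^{\infty}\frac{(-1)^n}{n!}f^{(n)}(2Y)\,(Y-m)^{n+1/2},
\end{equation*}
and summing against $r_2(m)$ over $0\le m\le Y$ gives exactly $\sum_{n\ge0}S_n(Y)$. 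Substituting $Y=x^2$ and combining with the previous display completes the proof.

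The one genuine technical point is the interchange of the two summations $\sum_{m}$ and $\sum_{n}$, which requires justifying absolute convergence uniformly in the relevant range — the worst case is $m$ near $0$, where $t=Y-m$ is as large as $Y$ and the ratio $t/(2Y)$ approaches $1/2$, so the Taylor series converges only geometrically (like $2^{-n}$ up to polynomial factors) rather than rapidly; since $r_2(m)(Y-m)^{n+1/2}\ll Y^{n+1/2+\epsilon}$ and $|f^{(n)}(2Y)|\asymp (2Y)^{1/2-n}\prod_{k<n}|1/2-k|/n!$-type bounds make $\frac{1}{n!}|f^{(n)}(2Y)|(Y-m)^{n+1/2}\ll Y^{1/2+\epsilon}2^{-n}$, the double series converges absolutely and the rearrangement is legitimate. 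This is precisely the same estimate underlying the bound $|R_n(Y)|\ll 2^{-n}Y^{1/2}$ recorded in Lemma 2, so no new work is needed beyond invoking it; everything else is the elementary lattice-point count described above.
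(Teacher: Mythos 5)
Your proposal is correct and follows essentially the same route as the paper: fix $(z_1,z_2)$, count $z_3$ via $2[w]+1=2w-2\psi(w)$ to peel off $-2T(x^2)$, then write $x^4-m^2 = (x^2-m)\big(2x^2-(x^2-m)\big)$ and Taylor-expand $\sqrt{\,\cdot\,}$ about $2x^2$ to produce $\sum_n S_n(x^2)$. Your added remark on the geometric $2^{-n}$ rate justifying the $m$--$n$ interchange is a small extra precaution the paper leaves implicit (it is exactly the mechanism behind the Lemma~2 bound $|R_n(Y)|\ll 2^{-n}Y^{1/2}$), but the argument is the same.
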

\begin{proof}
The first step is to execute the lattice point count as follows. By the definition of the Cygan-Kor{\'a}nyi norm, we have 
\begin{equation}\label{eq:2.29}
\begin{split}
N_{1}(x)&=\sum_{m^{2}+n^{2}\leq\,x^{4}}r_{{\scriptscriptstyle2}}(m)\\
&=2\sum_{0\,\leq\,m\,\leq\,x^{2}}r_{{\scriptscriptstyle 2}}(m)\big(x^{4}-m^{2}\big)^{1/2}-2\sum_{0\,\leq\,m\,\leq\,x^{2}}r_{{\scriptscriptstyle2}}(m)\psi\Big(\big(x^{4}-m^{2}\big)^{1/2}\Big)\\
&=2\sum_{0\,\leq\,m\,\leq\,x^{2}}r_{{\scriptscriptstyle 2}}(m)\big(x^{4}-m^{2}\big)^{1/2}-2T\big(x^{2}\big)\,.
\end{split}
\end{equation}
Next, we decompose the first sum using the following procedure. For $0\leq m\leq x^{2}$ an integer, we use Taylor expansion to write
\begin{eqnarray*}
\big(x^{4}-m^{2}\big)^{1/2}=\sum_{n=0}^{\infty}\frac{(-1)^{n}}{n!}f^{(n)}\big(2x^{2}\big)\big(x^{2}-m\big)^{n+1/2}\,,
\end{eqnarray*}
with $f(y)=\sqrt{y}$. Multiplying the above identity by $r_{{\scriptscriptstyle 2}}(m)$, and then summing over the range $0\leq m\leq x^{2}$, we have
\begin{equation}\label{eq:2.30}
\begin{split}
\sum_{0\,\leq\,m\,\leq\,x^{2}}r_{{\scriptscriptstyle 2}}(m)\big(x^{4}-m^{2}\big)^{1/2}&=\sum_{n=0}^{\infty}\frac{(-1)^{n}}{n!}f^{(n)}\big(2x^{2}\big)\sum_{0\,\leq\,m\,\leq\,x^{2}}r_{{\scriptscriptstyle 2}}(m)\big(x^{2}-m\big)^{n+1/2}\\
&=\sum_{n=0}^{\infty}S_{{\scriptscriptstyle n}}\big(x^{2}\big)\,.
\end{split}
\end{equation}
Inserting \eqref{eq:2.30} into the RHS of \eqref{eq:2.29}, we obtain
\begin{equation}\label{eq:2.31}
\begin{split}
N_{1}(x)&=2\sum_{n=0}^{\infty}S_{{\scriptscriptstyle n}}\big(x^{2}\big)-2T\big(x^{2}\big)\,.
\end{split}
\end{equation}
This concludes the proof.
\end{proof}
\noindent
We now turn to the proof of Proposition $1$.
\begin{proof}(Proposition 1). Let $X>0$ be large, and suppose that $X<x<2X$. By the upper bound estimate \eqref{eq:2.27} in Lemma $2$, it follows from Lemma $3$ that
\begin{equation}\label{eq:2.32}
\begin{split}
N_{1}(x)&=2\sum_{n=0}^{\infty}S_{{\scriptscriptstyle n}}\big(x^{2}\big)-2T\big(x^{2}\big)\\
&=2\sum_{n=0}^{\infty}\Big\{c_{{\scriptscriptstyle n}}x^{4}+R_{{\scriptscriptstyle n}}\big(x^{2}\big)\Big\}-2T\big(x^{2}\big)\\
&=2cx^{4}+2R_{{\scriptscriptstyle0}}\big(x^{2}\big)+2\sum_{n=1}^{\infty}R_{{\scriptscriptstyle n}}\big(x^{2}\big)-2T\big(x^{2}\big)\\
&=2cx^{4}+2R_{{\scriptscriptstyle0}}\big(x^{2}\big)-2T\big(x^{2}\big)+O(x)\,,
\end{split}
\end{equation}
with $c=\sum_{n=0}^{\infty}c_{{\scriptscriptstyle n}}$, where the infinite sum clearly converges absolutely. By the definition of $R_{{\scriptscriptstyle0}}(Y)$, it is easily verified that
\begin{equation}\label{eq:2.33}
\begin{split}
R_{{\scriptscriptstyle0}}(Y)&=2^{1/2}Y\Bigg\{\,\sum_{0\,\leq\,m\,\leq\,Y}r_{{\scriptscriptstyle 2}}(m)\bigg(1-\frac{m}{Y}\bigg)^{1/2}-\frac{2\pi}{3}Y\Bigg\}\\
&=2^{1/2}YR(Y)\,,
\end{split}
\end{equation}
Inserting \eqref{eq:2.33} into the RHS of \eqref{eq:2.32}, we find that
\begin{equation}\label{eq:2.34}
N_{1}(x)=2cx^{4}+2^{3/2}x^{2}R\big(x^{2}\big)-2T\big(x^{2}\big)+O(x)\,.
\end{equation}
We claim that $2c=\textit{vol}\big(\mathcal{B}\big)$. To see this, first note that by \eqref{eq:2.2} in Lemma $1$ we have the bound $|R\big(x^{2}\big)|\ll\log{x}$, and since $|T\big(x^{2}\big)|\ll x^{2}$, we obtain by \eqref{eq:2.34} that $N_{1}(x)=2cx^{4}+O(x^{2}\log{x})$. Since $N_{1}(x)\sim\textit{vol}\big(\mathcal{B}\big)x^{4}$ as $x\to\infty$, we conclude that $2c=\textit{vol}\big(\mathcal{B}\big)$.\\
Subtracting $\textit{vol}\big(\mathcal{B}\big)x^{4}$ from both sides of \eqref{eq:2.34}, and then dividing throughout by $x^{2}$, we have by \eqref{eq:2.2} in Lemma $1$ upon choosing $M=X^{2}$
\begin{equation}\label{eq:2.35}
\mathcal{E}_{1}(x)/x^{2}=-\frac{\sqrt{2}}{\pi}\sum_{m\,\leq\,X^{2}}\frac{r_{{\scriptscriptstyle 2}}(m)}{m}\cos{\big(2\pi\sqrt{m}x\big)}-2x^{-2}T\big(x^{2}\big)+O_{\epsilon}\big(X^{-1+\epsilon}\big)\,.
\end{equation}
This concludes the proof of Proposition $1$.
\end{proof}
\section{Almost periodicity}
\noindent
In this section we show that the suitably normalized error term $\mathcal{E}_{1}(x)/x^{2}$ can be approximated, in a suitable sense, by means of certain oscillating series. From this point onward, we shall use the notation $\widehat{\mathcal{E}}_{1}(x)=\mathcal{E}_{1}(x)/x^{2}$. The main result we shall set out to prove is the following.
\begin{prop}
We have
\begin{equation}\label{eq:3.1}
\lim_{M\to\infty}\limsup_{X\to\infty}\frac{1}{X}\int\limits_{X}^{2X}\Big|\widehat{\mathcal{E}}_{1}(x)-\sum_{m\,\leq\,M}\upphi_{{\scriptscriptstyle 1,m}}\big(\sqrt{m}x\big)\Big|\textit{d}x=0\,,
\end{equation}
where $\upphi_{{\scriptscriptstyle 1,1}}(t), \upphi_{{\scriptscriptstyle 1,2}}(t),\ldots$ are real-valued continuous functions, periodic of period $1$, given by
\begin{equation*}
\upphi_{{\scriptscriptstyle1,m}}(t)=-\frac{\sqrt{2}}{\pi}\frac{\mu^{2}(m)}{m}\sum_{k=1}^{\infty}\frac{r_{{\scriptscriptstyle2}}\big(mk^{2}\big)}{k^{2}}\cos{(2\pi kt)}\,.
\end{equation*}
\end{prop}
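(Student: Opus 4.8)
The plan is to pass from the truncated Voronoï expansion of Proposition $1$ to the infinite oscillating series $\sum_m \upphi_{1,m}(\sqrt m x)$ by (i) reorganising the single sum over $m$ in \eqref{eq:2.1} according to the squarefree kernel of $m$, (ii) extending the truncation from $m \le X^2$ to $\infty$ at the cost of an $L^1$-error that vanishes as $M\to\infty$, and (iii) showing the contribution of $x^{-2}T(x^2)$ is negligible in $L^1$-mean. First I would write $m = nk^2$ with $n$ squarefree; since $r_2$ is not multiplicative in a completely naive way, the correct grouping uses $\mu^2(n) r_2(nk^2)$, and collecting the terms with a fixed squarefree $n$ and variable $k$ gives exactly
\[
-\frac{\sqrt 2}{\pi}\sum_{m\le X^2}\frac{r_2(m)}{m}\cos(2\pi\sqrt m\,x)
= \sum_{n\le X^2}\mu^2(n)\Big(-\frac{\sqrt 2}{\pi}\frac{1}{n}\sum_{k\le \sqrt{X^2/n}}\frac{r_2(nk^2)}{k^2}\cos(2\pi k\sqrt n\,x)\Big),
\]
which is a truncated version of $\sum_n \upphi_{1,n}(\sqrt n x)$. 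So the job is to control the two discrepancies: the tail $k > \sqrt{X^2/n}$ inside each $\upphi_{1,n}$, and the tail $n > M$.

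The tail in $k$ is handled by the crude bound $r_2(nk^2)\ll_\epsilon (nk^2)^\epsilon$, giving $\sum_{k>K}r_2(nk^2)/k^2 \ll_\epsilon n^\epsilon K^{-1+\epsilon}$; since $K = \sqrt{X^2/n} \ge \sqrt{X^2/M}$ for $n \le M$, this is $\ll_\epsilon X^{-1+\epsilon} M^{1/2+\epsilon}$ uniformly, hence negligible as $X\to\infty$ for fixed $M$. For the tail in $n$, the natural route is a mean-square (or just mean) estimate: by expanding the square (or by the almost-orthogonality of the functions $t\mapsto \cos(2\pi k\sqrt n\, t)$ over long $x$-intervals, using that $\sqrt n_1,\sqrt n_2,\dots$ together with their integer multiples are $\mathbb{Q}$-linearly independent when the $n_i$ are distinct squarefree — or more simply, distinct as real frequencies) one gets
\[
\limsup_{X\to\infty}\frac1X\int_X^{2X}\Big|\sum_{M<n\le X^2}\upphi_{1,n}(\sqrt n x)\Big|^2\textit{d}x \ll \sum_{n>M}\frac{1}{n^2}\Big(\sum_{k=1}^\infty\frac{r_2(nk^2)}{k^2}\Big)^2 \ll_\epsilon \sum_{n>M}n^{-2+\epsilon},
\]
which tends to $0$ as $M\to\infty$; Cauchy–Schwarz then converts this to the desired $L^1$-statement. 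The frequencies appearing are $k\sqrt n$ with $n$ squarefree, and two such coincide only when the squarefree parts match and the multiples match, so the diagonal in the expansion is exactly $\sum_n n^{-2}(\sum_k r_2(nk^2)/k^2)^2$ and the off-diagonal contributes $o(1)$ as $X\to\infty$ by the standard $\frac1X\int_X^{2X}\cos(2\pi\theta x)\,dx \ll (\theta X)^{-1}$ estimate together with a separation bound for distinct frequencies.

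The remaining piece is $2x^{-2}T(x^2)$. Here I would invoke the assertion already flagged in the remark after Proposition $1$, namely $|x^{-2}T(x^2)| \ll x^{-\theta}$ for some $\theta>0$ — in fact even the weaker statement that its $L^1$-mean over $[X,2X]$ tends to $0$ suffices, and that in turn follows from a mean-square bound for $T(x^2)$ obtained by a further Voronoï-type treatment of the sum $\sum_m r_2(m)\psi((Y^2-m^2)^{1/2})$ exactly as in the companion paper \cite{gath2017best}. Assembling the three estimates inside a triangle inequality, for each fixed $M$ the $\limsup_{X\to\infty}$ of the averaged $L^1$-distance between $\widehat{\mathcal E}_1(x)$ and $\sum_{n\le M}\upphi_{1,n}(\sqrt n x)$ is bounded by the tail quantity $C_\epsilon M^{-1/2+\epsilon}$ (from the $n$-tail), and letting $M\to\infty$ gives \eqref{eq:3.1}. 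The main obstacle is the justification of the mean-square / almost-orthogonality estimate for the $n$-tail: one must make precise that the real frequencies $\{k\sqrt n : k\ge 1,\ n \text{ squarefree}\}$ have no accidental coincidences and are suitably separated, and that the bound $\sum_{n>M} n^{-2+\epsilon}$ for the diagonal is genuinely uniform in $X$; everything else is routine divisor-bound bookkeeping.
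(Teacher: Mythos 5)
Your decomposition and overall strategy match the paper's: rewrite the truncated Vorono\"{i} sum by squarefree kernel, bound the $k$-tail for $m\le M$ trivially (giving $M^{1/2}X^{-1+\epsilon}$), bound the $n$-tail in mean square by orthogonality, and show $x^{-2}T(x^{2})$ is negligible in $L^{1}$-mean over $[X,2X]$. But the two steps you yourself flag as the delicate ones are left as assertions, and in both cases the paper supplies genuine work that your sketch does not. For the $n$-tail off-diagonal, the ``standard estimate $\frac{1}{X}\int\cos(2\pi\theta x)\,\textit{d}x\ll(\theta X)^{-1}$ plus separation'' is not automatically adequate: the frequencies $\sqrt{m}$ with $m\le X^{2}$ have minimal gap $\asymp m^{-1/2}$, so the resulting sum $\frac{1}{X}\sum_{m\neq n}\frac{\mathbf{a}(m)\mathbf{a}(n)}{|\sqrt{m}-\sqrt{n}|}$ has a heavy near-diagonal contribution, and turning it into $\sum_{m}\mathbf{a}^{2}(m)\sqrt{m}\ll M^{-1/2}\log 2M$ is precisely the content of Montgomery--Vaughan's Hilbert inequality, which the paper invokes at \eqref{eq:3.14}; a naive term-by-term bound does not close this, and you would essentially have to re-derive the inequality. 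For the $T$-term, the remark's pointwise bound $|x^{-2}T(x^{2})|\ll x^{-\theta}$ is neither proved in the paper nor used in its Proposition~$2$; what the paper actually proves and uses is Lemma~$4$, the mean-square estimate $\frac{1}{X}\int_{X}^{2X}T^{2}(x^{2})\,\textit{d}x\ll X^{2}(\log X)^{4}$, obtained by applying Vaaler's trigonometric approximation to $\psi$, Cauchy--Schwarz, and a direct computation of the exponential-sum kernel in \eqref{eq:3.6}--\eqref{eq:3.8} --- not a further Vorono\"{i}-type argument as you suggest. A further small inaccuracy: the genuine diagonal after expanding the cosines is $\sum_{n>M}n^{-2}\sum_{k}r_{{\scriptscriptstyle2}}^{2}(nk^{2})/k^{4}$, not the square $\bigl(\sum_{k}r_{{\scriptscriptstyle2}}(nk^{2})/k^{2}\bigr)^{2}$ you wrote; the cross terms $k_{1}\neq k_{2}$ with the same $n$ are off-diagonal frequencies and must be absorbed into the Hilbert-inequality bound. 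The scaffold is right, but the two estimates that carry the proof are the ones left open.
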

\noindent
The proof of Proposition $2$ will be given in $\S3.2$. Our first task will be to deal with the remainder term $x^{-2}T\big(x^{2}\big)$ appearing in the approximate expression \eqref{eq:2.1}.
\subsection{Bounding the remainder term}
This subsection is devoted to proving the following lemma.
\begin{lemma}
We have
\begin{equation}\label{eq:3.2}
\frac{1}{X}\int\limits_{X}^{2X}T^{2}\big(x^{2}\big)\textit{d}x\ll X^{2}\big(\log{X}\big)^{4}\,.
\end{equation}
\end{lemma}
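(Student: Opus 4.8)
The plan is to estimate the mean square of $T(x^2)$ by opening the square, writing
\[
\frac{1}{X}\int_X^{2X}T^2(x^2)\,dx = \frac{1}{X}\int_X^{2X}\Bigg(\sum_{0\le m\le x^2} r_{\scriptscriptstyle 2}(m)\psi\big((x^4-m^2)^{1/2}\big)\Bigg)^{\!2}dx,
\]
and then expanding $\psi$ in its truncated Fourier series. Recall $\psi(t)=t-[t]-1/2$ has the expansion $\psi(t)=-\sum_{1\le |h|\le H}\frac{1}{2\pi i h}e^{2\pi i ht}+O\!\big(\min\{1,(H\|t\|)^{-1}\}\big)$; with $Y=x^2$ the relevant argument is $(Y^2-m^2)^{1/2}$, which for $m\le Y$ ranges over $[0,Y]$. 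Choosing $H$ a suitable power of $X$ (say $H\asymp X^{A}$ for a fixed large $A$), the error from truncating the Fourier series contributes, after summation over the $O(Y)$ values of $m$ and integration in $x$, an amount that is negligible — here one uses that for each $m$ the points $x$ where $\|(x^4-m^2)^{1/2}\|$ is small form a sparse set, handled by a standard Lemma of the ``$\int_X^{2X}\min\{1,(H\|g(x)\|)^{-1}\}\,dx$'' type together with the monotonicity and derivative lower bounds of $x\mapsto(x^4-m^2)^{1/2}$.

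Thus the main term becomes, up to negligible errors,
\[
\frac{1}{X}\int_X^{2X}\;\sum_{1\le |h_1|,|h_2|\le H}\;\frac{1}{(2\pi)^2 h_1 h_2}\sum_{0\le m_1\le x^2}\sum_{0\le m_2\le x^2} r_{\scriptscriptstyle 2}(m_1)r_{\scriptscriptstyle 2}(m_2)\, e^{2\pi i\big(h_1(x^4-m_1^2)^{1/2}+h_2(x^4-m_2^2)^{1/2}\big)}\,dx.
\]
I would split into the diagonal-type contribution, where the phase $h_1(x^4-m_1^2)^{1/2}+h_2(x^4-m_2^2)^{1/2}$ has small $x$-derivative on a substantial portion of $[X,2X]$, and the off-diagonal contribution, which is bounded by integration by parts (van der Corput / first-derivative estimate) since the $x$-derivative of the phase is bounded below by a positive power of $X$ times $\max(|h_1|,|h_2|)$ away from the near-diagonal. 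The phase derivative is $\frac{d}{dx}(x^4-m^2)^{1/2}=\frac{2x^3}{(x^4-m^2)^{1/2}}$, which is $\gg x$ uniformly and is monotone; so the only way the combined phase can have small derivative is if $h_1\approx -h_2$ and $m_1\approx m_2$ in an appropriate sense. The genuinely diagonal terms $h_2=-h_1$, $m_1=m_2$ give
\[
\frac{1}{(2\pi)^2}\sum_{1\le |h|\le H}\frac{1}{h^2}\sum_{0\le m\le X^2} r_{\scriptscriptstyle 2}^2(m)\;\asymp\;\sum_{m\le X^2} r_{\scriptscriptstyle 2}^2(m)\;\asymp\;X^2\log X,
\]
using $\sum_{|h|\ge1}h^{-2}<\infty$ and the classical estimate $\sum_{m\le N}r_{\scriptscriptstyle 2}^2(m)\asymp N\log N$; this already accounts for a factor $\log X$, and the remaining near-diagonal terms (with $m_1\ne m_2$ but close, or with the integration-by-parts boundary terms) are where the additional powers of $\log X$ in the bound $X^2(\log X)^4$ come from, via divisor-type bounds on $\sum_{m\le X^2}\sum_{0<|m-m'|\le\Delta}r_{\scriptscriptstyle 2}(m)r_{\scriptscriptstyle 2}(m')$ together with $\sum_{|h|\le H}|h|^{-1}\ll\log H\ll\log X$.

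The main obstacle I expect is the careful treatment of the near-diagonal range: after integration by parts in $x$ one gets boundary contributions of size roughly $\frac{1}{X}\cdot\frac{1}{|h_1 h_2|}\cdot\frac{1}{|\text{phase derivative}|}$ summed against $r_{\scriptscriptstyle 2}(m_1)r_{\scriptscriptstyle 2}(m_2)$, and when $h_1=-h_2=h$ and $m_1,m_2$ are close the phase derivative $\frac{2x^3}{(x^4-m_1^2)^{1/2}}-\frac{2x^3}{(x^4-m_2^2)^{1/2}}$ can be as small as $\asymp \frac{|m_1^2-m_2^2|}{x^3}$, so one must sum $\frac{x^3}{h^2|m_1^2-m_2^2|}r_{\scriptscriptstyle 2}(m_1)r_{\scriptscriptstyle 2}(m_2)$ over $m_1\ne m_2$ and this needs to be cut off at $|m_1-m_2|\ge 1$ and controlled by a second-derivative (van der Corput) estimate when $m_1,m_2$ are genuinely apart but not too far. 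Balancing the first- and second-derivative estimates, together with the elementary bound $\sum_{m\le N}r_{\scriptscriptstyle 2}(m)/\sqrt{m}\ll\sqrt N\log N$ and $r_{\scriptscriptstyle 2}(m)\ll_\epsilon m^\epsilon$, should yield exactly the claimed $X^2(\log X)^4$, the four logarithms arising as: one from $\sum r_{\scriptscriptstyle 2}^2(m)$ on the diagonal, one from $\sum_{|h|\le H}|h|^{-1}$, and two more from the double sum over the near-diagonal $m$'s. I would organize the write-up so that the diagonal term is isolated first (giving the dominant $X^2\log X$), and then show every other contribution is $O(X^2(\log X)^4)$.
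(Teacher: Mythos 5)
Your overall strategy---reduce $T(x^2)$ to exponential sums via a trigonometric approximation of $\psi$, open the square, and apply first-derivative estimates to the off-diagonal---is the same as the paper's. But the paper organizes the argument more efficiently, and your sketch as written has a real unaddressed issue. Two structural differences: first, the paper uses Vaaler's lemma (quoted immediately after Lemma 4) rather than the classical truncated Fourier expansion with a $\min\{1,(H\|t\|)^{-1}\}$ error; Vaaler's one-sided majorant makes the correction term itself a nonnegative trigonometric polynomial of the same shape as the main term, so no separate ``$\int_X^{2X}\min\{1,(H\|g(x)\|)^{-1}\}\,dx$'' lemma is needed and there are no awkward cross terms between the main sum and the error when squaring. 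Second, and more importantly, the paper applies Cauchy--Schwarz on the $h$-sum \emph{before} integrating, getting $T^2(x^2)\ll(\log X)\sum_{h\le X}h^{-1}|S_h(x^2)|^2+X^2$. This completely decouples the frequencies: one only ever has to estimate $\frac{1}{X}\int_X^{2X}|S_h(x^2)|^2\,dx$ for a single $h$, where after the substitution $x\mapsto x^{1/4}$ the only resonances are $m=n$, and Hilbert's inequality / the first-derivative estimate handle the rest cleanly, yielding $X^2\log X+X^2h^{-1}(\log X)^3$. The outer $h$-sum then produces exactly $X^2(\log X)^4$. No second-derivative estimates appear anywhere, so that part of your plan is superfluous.

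The gap in your version is the claim that the phase $h_1(x^4-m_1^2)^{1/2}+h_2(x^4-m_2^2)^{1/2}$ ``has small $x$-derivative only when $h_1\approx-h_2$ and $m_1\approx m_2$.'' That is not true. Since $g'_m(x)=2x^3/(x^4-m^2)^{1/2}$ sweeps through the entire ray $[2x,\infty)$ as $m$ ranges over $[0,x^2)$, for \emph{any} pair of opposite-sign frequencies $h_1>0>h_2$ with $|h_1|\ne|h_2|$ one can choose $(m_1,m_2)$---with one of them close to $x^2$---so that $h_1g'_{m_1}(x)+h_2g'_{m_2}(x)=0$ at some $x\in[X,2X]$. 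These cross-frequency resonances are genuinely present in your direct expansion of the square and your write-up does not explain how to control them (the $r_2$ weight near $m\approx x^2$ is small on average, so they are presumably manageable, but this needs an argument). The paper's Cauchy--Schwarz step eliminates the entire issue at the cost of a single factor of $\log X$, which is why the paper's four logarithms arise as: one from Cauchy--Schwarz, one from $\sum_{h\le X}h^{-1}$, one from $\sum_{m\le X^2}r_2^2(m)\asymp X^2\log X$ on the diagonal, and (on the dominant off-diagonal branch) three from the Hilbert-type sum $\sum_{m\ne n}r_2(m)r_2(n)/|m^2-n^2|\ll(\log X)^3$ together with $\sum_h h^{-2}=O(1)$; your accounting of the logarithms does not match this and is a further sign the details have not been worked through.
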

\noindent
Before commencing with the proof, we need the following result on trigonometric approximation for the $\psi$ function (see \cite{vaaler1985some}).
\begin{VL}[\cite{vaaler1985some}]
Let $H\geq1$. Then there exist trigonometrical polynomials
\begin{equation*}
\begin{split}
&\textit{(1)}\qquad\psi_{{\scriptscriptstyle H}}(\omega)=\sum_{1\,\leq\,h\,\leq\,H}\nu(h)\sin{(2\pi h\omega)}\\
&\textit{(2)}\qquad\psi^{\ast}_{{\scriptscriptstyle H}}(\omega)=\sum_{1\,\leq\,h\,\leq\,H}\nu^{\ast}(h)\cos{(2\pi h\omega)}\,,
\end{split}
\end{equation*}
with real coefficients satisfying $|\nu(h)|,\,|\nu^{\ast}(h)|\ll 1/h$, such that
\begin{equation*}
\big|\psi(\omega)-\psi_{{\scriptscriptstyle H}}(\omega)\big|\leq\psi^{\ast}_{{\scriptscriptstyle H}}(\omega)+\frac{1}{2[H]+2}\,.
\end{equation*}
\end{VL}
\noindent 
We now turn to the proof of Lemma $4$.
\begin{proof}(Lemma 4).
Let $X>0$ be large. By Vaaler's Lemma with $H=X$, we have for $x$ in the range $X<x<2X$
\begin{equation}\label{eq:3.3}
\big|T\big(x^{2}\big)\big|\ll\sum_{1\,\leq\,h\,\leq\,X}\frac{1}{h}\bigg|\sum_{0\,\leq\,m\,\leq\,x^{2}}r_{{\scriptscriptstyle2}}(m)\exp{\Big(2\pi ih\big(x^{4}-m^{2}\big)^{1/2}\Big)}\bigg|+X\,.
\end{equation}
Applying Cauchy–Schwarz inequality we obtain
\begin{equation}\label{eq:3.4}
T^{2}\big(x^{2}\big)\ll(\log{X})\sum_{1\,\leq\,h\,\leq\,X}\frac{1}{h}\big|S_{h}\big(x^{2}\big)\big|^{2}+X^{2}\,,
\end{equation}
where for $Y>0$ and $h\geq1$ an integer, $S_{h}(Y)$ is given by
\begin{equation*}
S_{h}(Y)=\sum_{0\,\leq\,m\,\leq\,Y}r_{{\scriptscriptstyle2}}(m)\exp{\Big(2\pi ih\big(Y^{2}-m^{2}\big)^{1/2}\Big)}\,.
\end{equation*}
Fix an integer $1\leq h\leq X$. Making a change of variable, we have
\begin{equation}\label{eq:3.5}
\begin{split}
\frac{1}{X}\int\limits_{X}^{2X}\big|S_{h}\big(x^{2}\big)\big|^{2}\textit{d}x&\ll\frac{1}{X^{4}}\int\limits_{X^{4}}^{16X^{4}}\big|S_{h}\big(\sqrt{x}\,\big)\big|^{2}\textit{d}x\\
&=\sum_{0\,\leq\,m,\,n\,\leq\,4X^{2}}r_{{\scriptscriptstyle2}}(m)r_{{\scriptscriptstyle2}}(n)\text{\Large{I}}_{h}(m,n)\,,
\end{split}
\end{equation}
where $\text{\Large{I}}_{h}(m,n)$ is given by
\begin{equation}\label{eq:3.6}
\text{\Large{I}}_{h}(m,n)=\frac{1}{X^{4}}\int\limits_{\text{max}\{m^{2},\,n^{2},\,X^{4}\}}^{16X^{4}}\exp{\Big(2\pi ih\Big\{\big(x-m^{2}\big)^{1/2}-\big(x-n^{2}\big)^{1/2}\Big\}\Big)}\textit{d}x\,.
\end{equation}
We have the estimate
\begin{equation}\label{eq:3.7}
\big|\text{\Large{I}}_{h}(m,n)\big|\ll\left\{
        \begin{array}{ll}
            1 & ;\,m=n 
            \\\\
            \frac{X^{2}}{h|m^{2}-n^{2}|} & ;\, m\neq n\,.
        \end{array}
    \right.
\end{equation}
Inserting \eqref{eq:3.7} into the RHS of \eqref{eq:3.5}, we obtain
\begin{equation}\label{eq:3.8}
\begin{split}
\frac{1}{X}\int\limits_{X}^{2X}\big|S_{h}\big(x^{2}\big)\big|^{2}\textit{d}x&\ll\sum_{0\,\leq\,m\,\leq\,4X^{2}}r^{2}_{{\scriptscriptstyle2}}(m)+X^{2}h^{-1}\sum_{0\,\leq\,m\neq n\,\leq\,4X^{2}}\frac{r_{{\scriptscriptstyle2}}(m)r_{{\scriptscriptstyle2}}(n)}{\big|m^{2}-n^{2}\big|}\\
&\ll X^{2}\log{X}+X^{2}h^{-1}\big(\log{X}\big)^{3}\,.
\end{split}
\end{equation}
Integrating both sides of \eqref{eq:3.4} and using \eqref{eq:3.8}, we find that
\begin{equation}\label{eq:3.9}
\frac{1}{X}\int\limits_{X}^{2X}T^{2}\big(x^{2}\big)\textit{d}x\ll X^{2}\big(\log{X}\big)^{4}\,.
\end{equation}
This concludes the proof.
\end{proof}
\noindent
We end this subsection by quoting the following result (see \cite{montgomery1974hilbert}) which will be needed in subsequent sections of the paper.
\begin{HI}[\cite{montgomery1974hilbert}]
Let $\big(\alpha(\lambda)\big)_{\lambda\in\Lambda}$ and $\big(\beta(\lambda)\big)_{\lambda\in\Lambda}$ be two sequences of complex numbers indexed by a finite set $\Lambda$ of real numbers. Then
\begin{equation*}
\bigg|\,\underset{\lambda\neq\nu}{\sum_{\lambda,\nu\in\Lambda}}\frac{\alpha(\lambda)\overline{\beta(\nu)}}{\lambda-\nu}\bigg|\ll\bigg(\sum_{\lambda\in\Lambda}|\alpha(\lambda)|^{2}\delta_{\lambda}^{-1}\bigg)^{1/2}\bigg(\sum_{\lambda\in\Lambda}|\beta(\lambda)|^{2}\delta_{\lambda}^{-1}\bigg)^{1/2}
\end{equation*}
where $\delta_{\lambda}=\underset{\nu\neq \lambda}{\underset{\nu\in\Lambda}{\textit{min}}}\,|\lambda-\nu|$, and the implied constant is absolute.
\end{HI}
\subsection{Proof of Proposition $2$}We now turn to the proof of Proposition $2$.
\begin{proof}(Proposition $2$).
Fix an integer $M\geq1$, and let $X>M^{1/2}$ be large. In the range $X<x<2X$, we have by Proposition $1$
\begin{equation}\label{eq:3.10}
\begin{split}
\Big|\widehat{\mathcal{E}}_{1}(x)&-\sum_{m\,\leq\,M}\upphi_{{\scriptscriptstyle 1,m}}\big(\sqrt{m}x\big)\Big|\\
&\ll_{\epsilon}\big|W_{{\scriptscriptstyle M,X^{2}}}(x)\big|+x^{-2}\big|T\big(x^{2}\big)\big|+\sum_{m\,\leq\,M}\frac{1}{m}\sum_{k>X/\sqrt{m}}\frac{r_{{\scriptscriptstyle2}}\big(mk^{2}\big)}{k^{2}}+X^{-1+\epsilon}\\
&\ll_{\epsilon}\big|W_{{\scriptscriptstyle M,X^{2}}}(x)\big|+x^{-2}\big|T\big(x^{2}\big)\big|+M^{1/2}X^{-1+\epsilon}\,,
\end{split}
\end{equation}
where $W_{{\scriptscriptstyle M,X^{2}}}(x)$ is given by
\begin{equation}\label{eq:3.11}
W_{{\scriptscriptstyle M,X^{2}}}(x)=\sum_{m\,\leq\,X^{2}}\mathbf{a}(m)\exp{\big(2\pi i\sqrt{m}x\big)}\,,
\end{equation}
and
\begin{equation*}
\mathbf{a}(m)=\left\{
        \begin{array}{ll}
            r_{{\scriptscriptstyle2}}(m)/m & ;\,m=\ell k^{2}\text{ with } \ell>M\text{ square-free}
            \\\\
            0 & ;\,\text{otherwise}\,.
        \end{array}
    \right.
\end{equation*}
Integrating both sides of \eqref{eq:3.10}, we have by  Lemma $4$ and Cauchy–Schwarz inequality
\begin{equation}\label{eq:3.12}
\begin{split}
\frac{1}{X}\int\limits_{X}^{2X}\Big|\widehat{\mathcal{E}}_{1}(x)-\sum_{m\,\leq\,M}\upphi_{{\scriptscriptstyle 1,m}}\big(\sqrt{m}x\big)\Big|\textit{d}x&\ll_{\epsilon}\frac{1}{X}\int\limits_{X}^{2X}\big|W_{{\scriptscriptstyle M,X^{2}}}(x)\big|\textit{d}x+X^{-1}\big(\log{X}\big)^{2}+M^{1/2}X^{-1+\epsilon}\\
&\ll_{\epsilon}\frac{1}{X}\int\limits_{X}^{2X}\big|W_{{\scriptscriptstyle M,X^{2}}}(x)\big|\textit{d}x+M^{1/2}X^{-1+\epsilon}
\end{split}
\end{equation}
It remains to estimate the first term appearing on the RHS of \eqref{eq:3.12}. We have
\begin{equation}\label{eq:3.13}
\begin{split}
\frac{1}{X}\int\limits_{X}^{2X}\big|W_{{\scriptscriptstyle M,X^{2}}}&(x)\big|^{2}\textit{d}x\\
&=\sum_{m\,\leq\,X^{2}}\mathbf{a}^{2}(m)+\frac{1}{2\pi iX}\bigg\{\sum_{m\neq n\,\leq\,X^{2}}\frac{\mathbf{a}_{{\scriptscriptstyle2X}}(m)\overline{\mathbf{a}_{{\scriptscriptstyle2X}}(n)}}{\sqrt{m}-\sqrt{n}}-\sum_{m\neq n\,\leq\,X^{2}}\frac{\mathbf{a}_{{\scriptscriptstyle X}}(m)\overline{\mathbf{a}_{{\scriptscriptstyle X}}(n)}}{\sqrt{m}-\sqrt{n}}\bigg\}\,,
\end{split}
\end{equation}
where for $\gamma>0$ and $m\geq1$ an integer, we define $\mathbf{a}_{{\scriptscriptstyle\gamma}}(m)=\mathbf{a}(m)\exp{\big(2\pi i\sqrt{m}\gamma\big)}$. We first estimate the off-diagonal terms. By Hilbert's inequality, we have for $\gamma=X, 2X$
\begin{equation}\label{eq:3.14}
\begin{split}
\bigg|\,\,\sum_{m\neq n\,\leq\,X^{2}}\frac{\mathbf{a}_{{\scriptscriptstyle\gamma}}(m)\overline{\mathbf{a}_{{\scriptscriptstyle\gamma}}(n)}}{\sqrt{m}-\sqrt{n}}\,\,\bigg|&\ll\sum_{m\,\leq\,X^{2}}\mathbf{a}^{2}(m)m^{1/2}\\
&\ll\sum_{m\,>M}r^{2}_{{\scriptscriptstyle2}}(m)m^{-3/2}\ll M^{-1/2}\log{2M}\,.
\end{split}
\end{equation}
Inserting \eqref{eq:3.14} into the RHS of \eqref{eq:3.13} and recalling that $X>M^{1/2}$, it follows that
\begin{equation}\label{eq:3.15}
\begin{split}
\frac{1}{X}\int\limits_{X}^{2X}\big|W_{{\scriptscriptstyle M,X^{2}}}(x)\big|^{2}\textit{d}x&\ll\sum_{m\,>M}r^{2}_{{\scriptscriptstyle2}}(m)m^{-2}+M^{-1}\log{2M}\\
&\ll M^{-1}\log{2M}\,.
\end{split}
\end{equation}
Inserting \eqref{eq:3.15} into the RHS of \eqref{eq:3.12}, applying Cauchy–Schwarz inequality and then taking $\limsup$, we arrive at
\begin{equation}\label{eq:3.16}
\begin{split}
\limsup_{X\to\infty}\frac{1}{X}\int\limits_{X}^{2X}\Big|\widehat{\mathcal{E}}_{1}(x)-\sum_{m\,\leq\,M}\upphi_{{\scriptscriptstyle 1,m}}\big(\sqrt{m}x\big)\Big|\textit{d}x\ll  M^{-1/2}\big(\log{2M}\big)^{1/2}
\end{split}
\end{equation}
Finally, letting $M\to\infty$ in \eqref{eq:3.16} concludes the proof.
\end{proof}
\section{The probability density}
\noindent
Having proved Proposition $2$ in the last section, in this section we turn to the construction of the probability density $\mathcal{P}_{1}(\alpha)$. We begin by making the following definition.
\begin{mydef}
For $\alpha\in\mathbb{C}$ and $M\geq1$ an integer, define 
\begin{equation*}
\mathfrak{M}_{X}(\alpha;M)=\frac{1}{X}\int\limits_{X}^{2X}\exp{\bigg(2\pi i\alpha\sum_{m\,\leq\,M}\upphi_{{\scriptscriptstyle 1,m}}\big(\sqrt{m}x\big)\bigg)}\textit{d}x\,,
\end{equation*}
and let
\begin{equation*}
\mathfrak{M}(\alpha;M)=\prod_{m\,\leq\,M}\Phi_{{\scriptscriptstyle1,m}}(\alpha)\quad;\quad\Phi_{{\scriptscriptstyle1,m}}(\alpha)=\int\limits_{0}^{1}\exp{\big(2\pi i\alpha\upphi_{{\scriptscriptstyle 1,m}}(t)\big)}\textit{d}t\,,
\end{equation*}
\end{mydef}
\noindent
We quote the following result (see \cite{heath1992distribution}, Lemma $2$) which will be needed in subsequent sections of the paper.
\begin{lemma}[\cite{heath1992distribution}]
Suppose that $\mathbf{b}_{{\scriptscriptstyle 1}}(t), \mathbf{b}_{{\scriptscriptstyle 2}}(t),\ldots, \mathbf{b}_{{\scriptscriptstyle k}}(t)$ are continuous functions from $\mathbb{R}$ to $\mathbb{C}$, periodic of period $1$, and that $\gamma_{{\scriptscriptstyle 1}}, \gamma_{{\scriptscriptstyle 2}},\ldots, \gamma_{{\scriptscriptstyle k}}$ are positive real numbers which are linearly independent over $\mathbb{Q}$. Then 
\begin{equation}\label{eq:4.1}
\lim_{X\to\infty}\frac{1}{X}\int\limits_{X}^{2X}\prod_{i\,\leq\,k}\mathbf{b}_{{\scriptscriptstyle i}}(\gamma_{{\scriptscriptstyle i}}x)\textit{d}x=\prod_{i\,\leq\,k}\int\limits_{0}^{1}\mathbf{b}_{{\scriptscriptstyle i}}(t)\textit{d}t\,.
\end{equation}
\end{lemma}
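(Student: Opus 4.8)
The statement is a continuous (``long average'') version of the Weyl equidistribution theorem for the linear flow $x\mapsto(\gamma_{1}x,\ldots,\gamma_{k}x)$ on the torus $\mathbb{T}^{k}=(\mathbb{R}/\mathbb{Z})^{k}$, and the plan is to prove it in two stages: first for characters, where it reduces to an elementary estimate, and then for general continuous periodic functions by Fej\'er approximation. Write $G(t_{1},\ldots,t_{k})=\prod_{i\leq k}\mathbf{b}_{i}(t_{i})$, a continuous (possibly complex-valued) function on $\mathbb{T}^{k}$; then the integrand in \eqref{eq:4.1} is $G(\gamma_{1}x,\ldots,\gamma_{k}x)$, and by Fubini the claimed limit is exactly $\int_{\mathbb{T}^{k}}G\,d\mathbf{t}=\prod_{i\leq k}\int_{0}^{1}\mathbf{b}_{i}(t)\,dt$, where $d\mathbf{t}$ denotes Haar measure. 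So it suffices to show $\frac{1}{X}\int_{X}^{2X}H(\gamma_{1}x,\ldots,\gamma_{k}x)\,dx\to\int_{\mathbb{T}^{k}}H\,d\mathbf{t}$ as $X\to\infty$ for every continuous $H$ on $\mathbb{T}^{k}$, and in particular for $H=G$.

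First I would verify this when $H$ is a character $H(\mathbf{t})=e^{2\pi i(n_{1}t_{1}+\cdots+n_{k}t_{k})}$, $\mathbf{n}=(n_{1},\ldots,n_{k})\in\mathbb{Z}^{k}$. For $\mathbf{n}=\mathbf{0}$ the average is identically $1=\int_{\mathbb{T}^{k}}H\,d\mathbf{t}$. For $\mathbf{n}\neq\mathbf{0}$, set $\theta=n_{1}\gamma_{1}+\cdots+n_{k}\gamma_{k}$, which is nonzero precisely because the $\gamma_{i}$ are linearly independent over $\mathbb{Q}$; then
\[
\Big|\frac{1}{X}\int_{X}^{2X}e^{2\pi i\theta x}\,dx\Big|=\frac{\big|e^{4\pi i\theta X}-e^{2\pi i\theta X}\big|}{2\pi|\theta|X}\leq\frac{1}{\pi|\theta|X}\longrightarrow 0=\int_{\mathbb{T}^{k}}H\,d\mathbf{t}\,.
\]
By linearity the convergence extends to every trigonometric polynomial on $\mathbb{T}^{k}$, and in particular to every finite product $H(\mathbf{t})=\prod_{i\leq k}P_{i}(t_{i})$ of one-variable trigonometric polynomials, for which it reads $\lim_{X\to\infty}\frac{1}{X}\int_{X}^{2X}\prod_{i\leq k}P_{i}(\gamma_{i}x)\,dx=\prod_{i\leq k}\int_{0}^{1}P_{i}(t)\,dt$.

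To pass to general $\mathbf{b}_{i}$, fix $\varepsilon>0$ and put $B=1+\max_{i}\|\mathbf{b}_{i}\|_{\infty}$. By Fej\'er's theorem each $\mathbf{b}_{i}$, being continuous and $1$-periodic, is approximated by a trigonometric polynomial $P_{i}$ with $\|\mathbf{b}_{i}-P_{i}\|_{\infty}<\varepsilon$, hence $\|P_{i}\|_{\infty}<B$. A telescoping (hybrid) estimate, writing $\prod a_{i}-\prod b_{i}=\sum_{j}(a_{1}\cdots a_{j-1})(a_{j}-b_{j})(b_{j+1}\cdots b_{k})$, then yields both
\[
\Big|\prod_{i\leq k}\mathbf{b}_{i}(\gamma_{i}x)-\prod_{i\leq k}P_{i}(\gamma_{i}x)\Big|\leq k\varepsilon B^{k-1}\quad(\forall x)\qquad\text{and}\qquad\Big|\prod_{i\leq k}\int_{0}^{1}\mathbf{b}_{i}(t)\,dt-\prod_{i\leq k}\int_{0}^{1}P_{i}(t)\,dt\Big|\leq k\varepsilon B^{k-1}\,.
\]
Combined with the previous step, this shows that $\limsup_{X\to\infty}$ and $\liminf_{X\to\infty}$ of $\frac{1}{X}\int_{X}^{2X}\prod_{i\leq k}\mathbf{b}_{i}(\gamma_{i}x)\,dx$ both lie within $2k\varepsilon B^{k-1}$ of $\prod_{i\leq k}\int_{0}^{1}\mathbf{b}_{i}(t)\,dt$; letting $\varepsilon\to0$ completes the argument.

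There is no substantial obstacle here: the single arithmetic ingredient — that no nontrivial integer combination of the $\gamma_{i}$ vanishes — is used only to force $\theta\neq0$ in the character estimate, and that is exactly where the $\mathbb{Q}$-linear independence hypothesis enters; everything else is soft analysis. The only point meriting a little care is to organize the approximation so that one replaces the product $\prod_{i\leq k}\mathbf{b}_{i}$ by a product of one-variable trigonometric polynomials with the constants kept under control (the hybrid estimate above), rather than approximating $G$ on $\mathbb{T}^{k}$ by a single multivariate trigonometric polynomial; the latter route is equally valid but slightly less transparent given the product form of the integrand in \eqref{eq:4.1}.
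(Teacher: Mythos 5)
Your proof is correct. The paper does not prove Lemma 5 at all — it simply quotes the statement from Heath-Brown's 1992 paper (his Lemma 2), so there is no proof in the paper to compare against. Your argument is the standard Weyl-equidistribution proof: the $\mathbb{Q}$-linear independence of the $\gamma_i$ guarantees $\theta=n_1\gamma_1+\cdots+n_k\gamma_k\neq0$ for any nonzero integer vector $\mathbf{n}$, so each nontrivial character average $\frac{1}{X}\int_X^{2X}e^{2\pi i\theta x}\,dx$ is $O\big(1/(|\theta|X)\big)$, and the extension from characters to general continuous periodic functions goes through Fej\'er approximation together with the hybrid telescoping bound, which you carry out cleanly. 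This is in fact essentially the argument Heath-Brown gives; one small note is that your reduction to products of \emph{one-variable} trigonometric polynomials, rather than approximating $G$ by a single multivariate polynomial on $\mathbb{T}^k$, is a tidy way to exploit the product structure, and it is worth observing that the two routes are equivalent since the tensor products $P_1(t_1)\cdots P_k(t_k)$ already span a dense subalgebra of $C(\mathbb{T}^k)$ by Stone--Weierstrass.
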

\noindent
We now state the main result of this section.
\begin{prop}
\text{ }\\
(I) We have
\begin{equation}\label{eq:4.2}
\lim_{X\to\infty}\mathfrak{M}_{X}(\alpha;M)=\mathfrak{M}(\alpha;M)\,.
\end{equation}
More over, if we let
\begin{equation*}
\Phi_{{\scriptscriptstyle1}}(\alpha)=\prod_{m=1}^{\infty}\Phi_{{\scriptscriptstyle1,m}}(\alpha)\,,
\end{equation*}
then $\Phi_{{\scriptscriptstyle1}}(\alpha)$ defines an entire function of $\alpha$, where the infinite product converges absolutely and uniformly on any compact subset of the plane. For large $|\alpha|$, $\alpha=\sigma+i\tau$, $\Phi_{{\scriptscriptstyle1}}(\alpha)$ satisfies the bound
\begin{equation}\label{eq:4.3}
\big|\Phi_{{\scriptscriptstyle1}}(\alpha)\big|\leq\exp{\bigg(-\frac{\pi^{2}}{2}\big(C_{{\scriptscriptstyle1}}^{-1}\sigma^{2}-C_{{\scriptscriptstyle1}}\tau^{2}\big)|\alpha|^{-1/\theta(|\alpha|)}\log{|\alpha|}+C_{{\scriptscriptstyle1}}|\tau|\log{|\alpha|}\bigg)}\,,
\end{equation}
where $C_{{\scriptscriptstyle1}}>1$ is an absolute constant, and $\theta(x)=1-c/\log\log{x}$ with $c>0$ an absolute constant.\\
%In addition, for any non-negative integer $j\geq0$ and any $\sigma\in\mathbb{R}$, $|\sigma|$ sufficiently large in terms of $j$, it holds
%
%
%\begin{equation}\label{eq:4.4}
%\big|\Phi_{{\scriptscriptstyle1}}^{(j)}(\sigma)\b%ig|\leq\exp{\bigg(-K_{{\scriptscriptstyle1}}|\sig%ma|^{2-1/\theta(|\sigma|)}\log{|\sigma|}\bigg)}\q%uad;\quad K_{{\scriptscriptstyle1}}=\frac{\pi^{2}%}{2^{6}C_{{\scriptscriptstyle1}}}\,.
%\end{equation}
%
%
(II) For $x\in\mathbb{R}$, let $\mathcal{P}_{1}(x)=\widehat{\Phi}_{{\scriptscriptstyle1}}(x)$ be the Fourier transform of $\Phi_{{\scriptscriptstyle1}}$. Then $\mathcal{P}_{1}(x)$ defines a probability density which satisfies for any non-negative integer $j\geq0$ and any $x\in\mathbb{R}$, $|x|$ sufficiently large in terms of $j$, the bound
\begin{equation}\label{eq:4.4}
\big|\mathcal{P}^{(j)}_{1}(x)\big|\leq\exp{\bigg(-\frac{\pi}{2}|x|\exp{\big(\rho|x|\big)}\bigg)}\quad;\quad\rho=\frac{\pi}{5C_{{\scriptscriptstyle1}}}\,.
\end{equation}
\end{prop}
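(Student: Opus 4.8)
The plan is to establish Proposition 4 in two stages, corresponding to parts (I) and (II), where the crucial input is the very rapid decay of the coefficients of $\upphi_{1,m}$ together with the structure of the individual characteristic functions $\Phi_{1,m}(\alpha)$.

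\textbf{Part (I).} First I would prove the limit \eqref{eq:4.2} by invoking Lemma 5. The exponents $\sqrt{m}$ for $m$ squarefree (or more precisely, a maximal $\mathbb{Q}$-linearly independent subset of $\{\sqrt{m}:m\le M\}$) are linearly independent over $\mathbb{Q}$; since $\upphi_{1,m}(t)$ vanishes identically unless $m$ is of the form $\ell k^2$, one groups the terms $\upphi_{1,m}(\sqrt m x)$ according to the squarefree kernel, writing $\sum_{m\le M}\upphi_{1,m}(\sqrt m x)$ as a sum over squarefree $\ell$ of a single periodic function evaluated at $\sqrt{\ell}\,x$, and then applies \eqref{eq:4.1} to the product expansion of the exponential. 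This yields $\lim_X \mathfrak M_X(\alpha;M)=\prod_{m\le M}\Phi_{1,m}(\alpha)=\mathfrak M(\alpha;M)$. Next, for the entire-function claim and the bound \eqref{eq:4.3}, I would analyze each factor $\Phi_{1,m}(\alpha)=\int_0^1 \exp(2\pi i\alpha\upphi_{1,m}(t))\,dt$. Since $|\upphi_{1,m}(t)|\ll \mu^2(m)m^{-1}\sum_k r_2(mk^2)k^{-2}\ll_\epsilon m^{-1+\epsilon}$ uniformly in $t$, each $\Phi_{1,m}$ is entire with $|\Phi_{1,m}(\alpha)-1|\ll_\epsilon |\alpha|\,m^{-1+\epsilon}e^{2\pi|\tau|\,m^{-1+\epsilon}}$, so the infinite product $\Phi_1(\alpha)=\prod_m\Phi_{1,m}(\alpha)$ converges absolutely and locally uniformly, defining an entire function. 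To get the decay bound \eqref{eq:4.3} one needs a lower bound on the oscillation of $\upphi_{1,m}$; writing $\upphi_{1,m}(t)=-\frac{\sqrt 2}{\pi}\frac{\mu^2(m)}{m}\sum_k \frac{r_2(mk^2)}{k^2}\cos(2\pi kt)$, the leading $k=1$ term has amplitude $\asymp r_2(m)/m$, and by a second-moment computation $\int_0^1\upphi_{1,m}(t)^2\,dt\asymp m^{-2}\sum_k r_2(mk^2)^2 k^{-4}$. Using $\mathrm{Re}(2\pi i\alpha\upphi_{1,m}(t))=-2\pi\tau\upphi_{1,m}(t)$ one bounds $|\Phi_{1,m}(\alpha)|\le e^{2\pi|\tau|\,\|\upphi_{1,m}\|_\infty}\cdot\big|\int_0^1 e^{2\pi i\sigma\upphi_{1,m}(t)}\,dt\big|$ (after an elementary inequality separating real and imaginary parts of the exponent), and then exploits that $\int_0^1 e^{2\pi i\sigma\phi(t)}dt$ is genuinely small when $\sigma\|\phi\|$ is large — quantitatively, for $|\sigma|$ moderately large compared with $m$ one gets a factor like $\exp(-c\sigma^2\|\upphi_{1,m}\|_2^2)$ from the lower bound $1-|\int e^{i\theta}|\gg \|\theta\|_2^2$ for small amplitudes. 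Summing $-\log|\Phi_{1,m}(\alpha)|$ over the range $m\le |\alpha|^{\theta(|\alpha|)}$ (where the amplitudes $\sim |\alpha|/m$ are still $O(1)$), using $\sum_{m\le T}r_2(m)^2/m^2\asymp (\log T)^2$ and similar average estimates, produces the main term $\frac{\pi^2}{2}C_1^{-1}\sigma^2|\alpha|^{-1/\theta(|\alpha|)}\log|\alpha|$, while the crude upper envelope $e^{2\pi|\tau|\|\upphi_{1,m}\|_\infty}$ summed over the same range contributes the $C_1|\tau|\log|\alpha|$ and $C_1\tau^2|\alpha|^{-1/\theta}\log|\alpha|$ corrections.

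\textbf{Part (II).} Having \eqref{eq:4.3}, I would note that on the real axis $\tau=0$ it reads $|\Phi_1(\sigma)|\le \exp(-\frac{\pi^2}{2}C_1^{-1}\sigma^{2-1/\theta(|\sigma|)}\log|\sigma|)$, which decays faster than any polynomial, so $\Phi_1\in L^1(\mathbb R)$ and its Fourier transform $\mathcal P_1(x)=\widehat{\Phi_1}(x)=\int_{-\infty}^\infty \Phi_1(\sigma)e^{-2\pi i x\sigma}\,d\sigma$ is a bounded continuous function; since each $\Phi_{1,m}$ is the characteristic function of the bounded random variable $\upphi_{1,m}(X_m)$ and the series $\sum_m\upphi_{1,m}(X_m)$ converges a.s.\ (the $\upphi_{1,m}$ being mean-zero with summable variances), $\Phi_1$ is the characteristic function of the limit random variable, so $\mathcal P_1$ is a probability density (non-negativity and total mass one follow from Lévy's theorem / Bochner). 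For the derivative bounds \eqref{eq:4.4}, I would differentiate under the integral: $\mathcal P_1^{(j)}(x)=(-2\pi i)^j\int_{-\infty}^\infty \sigma^j\Phi_1(\sigma)e^{-2\pi i x\sigma}\,d\sigma$, and then shift the contour into the complex plane. Because $\Phi_1$ is entire and \eqref{eq:4.3} controls it on horizontal lines, for $x>0$ I move to $\mathrm{Im}(\alpha)=-\tau_0$ with $\tau_0>0$ chosen optimally: the shifted integrand is bounded by $|\alpha|^j\exp(2\pi x\tau_0)\exp(-\frac{\pi^2}{2}C_1^{-1}\sigma^2|\alpha|^{-1/\theta}\log|\alpha|+\frac{\pi^2}{2}C_1\tau_0^2|\alpha|^{-1/\theta}\log|\alpha|+C_1\tau_0\log|\alpha|)$; the dominant interplay is between the gain $e^{2\pi x\tau_0}$ and the loss $e^{C_1\tau_0\log|\alpha|}=|\alpha|^{C_1\tau_0}$, so choosing $\tau_0$ of size $\asymp x$ (balanced so that $e^{2\pi x\tau_0}$ is overwhelmed by the Gaussian-type factor only on the tail) and optimizing yields a bound of the shape $\exp(-\frac{\pi}{2}x e^{\rho x})$ with $\rho=\pi/(5C_1)$; the case $x<0$ is symmetric by moving the contour upward. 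The polynomial prefactor $|\alpha|^j$ and the constant in front are absorbed once $|x|$ is large in terms of $j$.

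\textbf{Main obstacle.} The routine parts are the application of Lemma 5 and the standard Fourier/probability bookkeeping; the genuine work is the quantitative lower bound on $1-|\Phi_{1,m}(\alpha)|$ — i.e.\ showing that each single-frequency-dominated oscillatory integral $\int_0^1 e^{2\pi i\alpha\upphi_{1,m}(t)}\,dt$ is small in exactly the regime $m\ll|\alpha|$ — uniformly enough that summing $-\log|\Phi_{1,m}|$ over $m\le|\alpha|^{\theta(|\alpha|)}$ reproduces the precise exponent $|\alpha|^{-1/\theta(|\alpha|)}\log|\alpha|$ with the stated constant $\frac{\pi^2}{2C_1}$. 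This requires carefully tracking the average size of $r_2(mk^2)$ over squarefree $m$ (controlling both the main term and showing the higher $k$ do not destroy the $k=1$ contribution), and then in Part (II) the delicate optimization of the contour shift $\tau_0$ against the competing exponential factors to land exactly on $\exp(\rho|x|)$ rather than a weaker $\exp(\rho|x|^{1-\epsilon})$.
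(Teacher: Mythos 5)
Your outline correctly identifies the two ingredients (Lemma~5 for \eqref{eq:4.2}; contour shift for \eqref{eq:4.4}), and your heuristic in the final paragraph about where the work lies is sound, but the proposal has two genuine gaps, one in each part.

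\textbf{Gap in Part (I): the convergence argument for the infinite product fails.} You claim $|\Phi_{1,m}(\alpha)-1|\ll_\epsilon |\alpha|\,m^{-1+\epsilon}e^{2\pi|\tau|m^{-1+\epsilon}}$ gives absolute and locally uniform convergence of $\prod_m\Phi_{1,m}(\alpha)$. But $\sum_m m^{-1+\epsilon}$ diverges for every $\epsilon>0$, so a first-order estimate of this kind does \emph{not} yield convergence. The indispensable step — which the paper uses and you omit — is the mean-zero property $\int_0^1\upphi_{1,m}(t)\,dt=0$ (which holds because the Fourier series of $\upphi_{1,m}$ has no constant term). This kills the linear term in the Taylor expansion of $\exp(2\pi i\alpha\upphi_{1,m}(t))$ and upgrades the estimate to the \emph{quadratic} bound $\Phi_{1,m}(\alpha)=1+O\big(|\alpha|\,r_2(m)/m\big)^2$ for $m\gg|\alpha|^2$. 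Since $\sum_m r_2^2(m)/m^2<\infty$, this sums, and the product converges. Without the vanishing of the first moment, the product has no reason to converge. (Relatedly, your statement that $\sum_{m\le T}r_2(m)^2/m^2\asymp(\log T)^2$ is false; that sum converges, with tail $\asymp T^{-1}\log T$, and it is the tail bound, applied at $T=\ell\asymp|\alpha|^{1/\theta(|\alpha|)}$, that produces the exponent in \eqref{eq:4.3}. A minor further slip: $\upphi_{1,m}\equiv 0$ unless $m$ is \emph{square-free} — the $\mu^2(m)$ factor — not ``unless $m=\ell k^2$''; no grouping by square-free kernel is needed to apply Lemma~5.)

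\textbf{Gap in Part (II): the contour shift is of the wrong size.} You propose shifting to $\mathrm{Im}(\alpha)=-\tau_0$ with $\tau_0\asymp x$. But a shift of that size produces at best Gaussian decay $\exp(-cx^2)$: the exponential gain $e^{-2\pi x\tau_0}$ is $e^{-cx^2}$, and the loss from $|\Phi_1|$ on the shifted line, $\exp(C_1\tau_0\log|\alpha|)\sim\exp(C_1 x\log x)$, is of lower order — so nothing forces you beyond Gaussian decay, and nothing produces the factor $e^{\rho|x|}$ in the exponent. The stated bound $\exp(-\tfrac{\pi}{2}|x|e^{\rho|x|})$ requires $\tau_0$ to be \emph{exponentially large} in $|x|$: the paper chooses $\tau=-\mathrm{sgn}(x)\,e^{\rho|x|}$ with $\rho=\pi/(5C_1)$, and the estimate after bounding the integral reads $|\mathcal{P}_1^{(j)}(x)|\le(4\pi C_1|\tau|)^{j+1}\exp\big(-2\pi|\tau|\{|x|-\tfrac{5C_1}{2\pi}\log|\tau|\}\big)$, so the optimization is $\log|\tau|<\tfrac{2\pi}{5C_1}|x|$, i.e.\ $|\tau|$ can be taken as large as $e^{\rho|x|}$. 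Your parenthetical remark writes down the correct final shape of the bound, but your stated choice of $\tau_0$ does not lead to it; the essential observation that the shift must be exponential in $|x|$ is missing.

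Otherwise the plan — applying Lemma~5 with the square-free $\sqrt m$ (noting the other $\upphi_{1,m}$ vanish identically), splitting the product $\prod_m\Phi_{1,m}(\alpha)$ at a cutoff $m\asymp|\alpha|^{1/\theta(|\alpha|)}$ with a trivial bound below and a second-order expansion above, using $\sum_{m\le Y}\mu^2(m)r_2^2(m)\asymp Y\log Y$, and a contour shift with Cauchy's theorem for the derivative bounds — matches the paper.
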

\begin{proof}
We begin with the proof of $\textit{(I)}$. Let $\alpha\in\mathbb{C}$ and $M\geq1$ an integer. We are going apply Lemma $5$ with $\mathbf{b}_{{\scriptscriptstyle m}}(t)=\exp{\big(2\pi i\alpha\upphi_{{\scriptscriptstyle 1,m}}(t)\big)}$, and frequencies $\gamma_{{\scriptscriptstyle m}}=\sqrt{m}$. The elements of the set $\mathscr{B}=\{\sqrt{m}:|\mu(m)|=1\}$ are are linearly independent over $\mathbb{Q}$, and since $\upphi_{{\scriptscriptstyle 1,m}}(t)\equiv0$ whenever $\sqrt{m}\notin\mathscr{B}$ (i.e. whenever $m$ is not square-free), we see that the conditions of Lemma $5$ are satisfied, and thus \eqref{eq:4.2} holds.\\
Now, let us show that $\Phi_{{\scriptscriptstyle1}}(\alpha)$ defines an entire function of $\alpha$. First we note the following. By the definition of $\upphi_{{\scriptscriptstyle 1,m}}(t)$, we have
\begin{equation}\label{eq:4.5}
\begin{split}
\int\limits_{0}^{1}\upphi_{{\scriptscriptstyle 1,m}}(t)\textit{d}t&=-\frac{\sqrt{2}}{\pi}\frac{\mu^{2}(m)}{m}\sum_{k=1}^{\infty}\frac{r_{{\scriptscriptstyle2}}\big(mk^{2}\big)}{k^{2}}\int\limits_{0}^{1}\cos{(2\pi kt)}\textit{d}t\\
&=0\,,
\end{split}
\end{equation}
and we also have the uniform bound
\begin{equation}\label{eq:4.6}
\begin{split}
\big|\upphi_{{\scriptscriptstyle 1,m}}(t)\big|&\leq\frac{\sqrt{2}}{\pi}\frac{\mu^{2}(m)}{m}\sum_{k=1}^{\infty}\frac{r_{{\scriptscriptstyle2}}\big(mk^{2}\big)}{k^{2}}\\
&\leq2^{5/2}\pi^{-1}\mu^{2}(m)\frac{r_{{\scriptscriptstyle2}}(m)}{m}\sum_{k=1}^{\infty}\frac{r_{{\scriptscriptstyle2}}\big(k^{2}\big)}{k^{2}}\leq\mathfrak{a}\,\mu^{2}(m)\frac{r_{{\scriptscriptstyle2}}(m)}{m}\,,
\end{split}
\end{equation}
for some absolute constant $\mathfrak{a}>0$. By \eqref{eq:4.5} and \eqref{eq:4.6} we obtain
\begin{equation}\label{eq:4.7}
\begin{split}
\Phi_{{\scriptscriptstyle1,m}}(\alpha)&=\int\limits_{0}^{1}\exp{\big(2\pi i\alpha\upphi_{{\scriptscriptstyle 1,m}}(t)\big)}\textit{d}t\\
&=1+O\bigg(|\alpha|\frac{r_{{\scriptscriptstyle2}}(m)}{m}\bigg)^{2}\,,
\end{split}
\end{equation}
whenever, say, $m\geq|\alpha|^{2}$. Since $\sum_{m=1}^{\infty}r^{2}_{{\scriptscriptstyle2}}(m)/m^{2}<\infty$, it follows from \eqref{eq:4.7} that the infinite product $\prod_{m=1}^{\infty}\Phi_{{\scriptscriptstyle1,m}}(\alpha)$ converges absolutely and uniformly on any compact subset of the plane, and so $\Phi_{{\scriptscriptstyle1}}(\alpha)$ defines an entire function of $\alpha$.\\
We now estimate $\Phi_{{\scriptscriptstyle1}}(\alpha)$ for large $|\alpha|$, $\alpha=\sigma+i\tau$. Let $c>0$ be an absolute constant such that (see \cite{Wigert})
\begin{equation}\label{eq:4.8}
r_{{\scriptscriptstyle2}}(m)\leq m^{c/\log\log{m}}\quad:\quad m>2\,,
\end{equation}
and for real $x>\textit{e}$ we write $\theta(x)=1-c/\log\log{x}$. Let $\epsilon>0$ be a small absolute constant which will be specified later, and set
\begin{equation*}
\ell=\ell(\alpha)=\Big[\Big(\epsilon^{-1}|\alpha|\Big)^{1/\theta(|\alpha|)}\Big]+1\,.
\end{equation*}
We are going to estimate the infinite product $\prod_{m=1}^{\infty}\Phi_{{\scriptscriptstyle1,m}}(\alpha)$ separately for $m<\ell$ and $m\geq\ell$. In what follows, we assume that $|\alpha|$ is sufficiently large in terms of $c$ and $\epsilon$. The product over $m<\ell$ is estimated trivially by using the upper bound \eqref{eq:4.6}, leading to
\begin{equation}\label{eq:4.9}
\begin{split}
\Big|\prod_{m<\ell}\Phi_{{\scriptscriptstyle1,m}}(\alpha)\Big|&\leq\exp{\bigg(2\pi\mathfrak{a}|\tau|\sum_{m<\ell}\mu^{2}(m)\frac{r_{{\scriptscriptstyle2}}(m)}{m}\bigg)}\\
&\leq\exp{\Big(\mathfrak{b}|\tau|\log{|\alpha|}\Big)}\,,
\end{split}
\end{equation}
for some absolute constant $\mathfrak{b}>0$.\\
Suppose now that $m\geq\ell$. By \eqref{eq:4.8} we have
\begin{equation}\label{eq:4.10}
\frac{r_{{\scriptscriptstyle2}}(m)}{m}|\alpha|\leq m^{-\theta(m)}|\alpha|\leq\big(\epsilon^{-1}|\alpha|\big)^{-\frac{\theta(m)}{\theta(|\alpha|)}}|\alpha|\leq\epsilon
\end{equation}
It follows from \eqref{eq:4.5}, \eqref{eq:4.6} and \eqref{eq:4.10} that
\begin{equation}\label{eq:4.11}
\Phi_{{\scriptscriptstyle1,m}}(\alpha)=1-\frac{(2\pi\alpha)^{2}}{2}\int\limits_{0}^{1}\upphi^{2}_{{\scriptscriptstyle 1,m}}(t)\textit{d}t+\mathcal{R}_{{\scriptscriptstyle m}}(\alpha)\,,
\end{equation}
where the remainder term $\mathcal{R}_{{\scriptscriptstyle m}}(\alpha)$ satisfies the bound
\begin{equation}\label{eq:4.12}
\big|\mathcal{R}_{{\scriptscriptstyle m}}(\alpha)\big|\leq\bigg\{\frac{2}{3}\pi\mathfrak{a}\epsilon\exp{\big(2\pi\mathfrak{a}\epsilon\big)}\bigg\}\frac{(2\pi|\alpha|)^{2}}{2}\int\limits_{0}^{1}\upphi^{2}_{{\scriptscriptstyle 1,m}}(t)\textit{d}t\,.
\end{equation}
At this point we specify $\epsilon$, by choosing $0<\epsilon<\frac{1}{64}$ such that
\begin{equation*}
2\pi\mathfrak{a}\epsilon^{1/2}\exp{\big(2\pi\mathfrak{a}\epsilon\big)}\leq1\,.
\end{equation*}
With this choice of $\epsilon$, we have
\begin{equation}\label{eq:4.13}
\big|\mathcal{R}_{{\scriptscriptstyle m}}(\alpha)\big|\leq\epsilon^{1/2}\frac{(2\pi|\alpha|)^{2}}{2}\int\limits_{0}^{1}\upphi^{2}_{{\scriptscriptstyle 1,m}}(t)\textit{d}t\,,
\end{equation}
and we also note that $\big|\Phi_{{\scriptscriptstyle1,m}}(\alpha)-1\big|\leq\epsilon<\frac{1}{2}$. Thus, on rewriting \eqref{eq:4.11} in the form
\begin{equation}\label{eq:4.14}
\Phi_{{\scriptscriptstyle1,m}}(\alpha)=\exp{\Bigg(-\frac{(2\pi\alpha)^{2}}{2}\int\limits_{0}^{1}\upphi^{2}_{{\scriptscriptstyle 1,m}}(t)\textit{d}t+\widetilde{\mathcal{R}}_{{\scriptscriptstyle m}}(\alpha)\Bigg)}\,,
\end{equation}
it follows from \eqref{eq:4.13} that the remainder term $\widetilde{\mathcal{R}}_{{\scriptscriptstyle m}}(\alpha)$ satisfies the bound
\begin{equation}\label{eq:4.15}
\big|\widetilde{\mathcal{R}}_{{\scriptscriptstyle m}}(\alpha)\big|\leq\epsilon^{1/2}(2\pi|\alpha|)^{2}\int\limits_{0}^{1}\upphi^{2}_{{\scriptscriptstyle 1,m}}(t)\textit{d}t\,.
\end{equation}
From \eqref{eq:4.14} and \eqref{eq:4.15} we obtain
\begin{equation}\label{eq:4.16}
\prod_{m\geq\ell}\big|\Phi_{{\scriptscriptstyle1,m}}(\alpha)\big|\leq\exp{\Bigg(-\frac{\pi^{2}}{2}\big(3\sigma^{2}-5\tau^{2}\big)\sum_{m\,\geq\,\ell}\,\int\limits_{0}^{1}\upphi^{2}_{{\scriptscriptstyle 1,m}}(t)\textit{d}t\Bigg)}\,.
\end{equation}
Now, by \eqref{eq:4.6} we have
\begin{equation}\label{eq:4.17}
\sum_{m\,\geq\,\ell}\,\int\limits_{0}^{1}\upphi^{2}_{{\scriptscriptstyle 1,m}}(t)\textit{d}t\leq\mathfrak{a}^{2}\sum_{m\,\geq\,\ell}\mu^{2}(m)\frac{r^{2}_{{\scriptscriptstyle2}}(m)}{m^{2}}\,,
\end{equation}
and since
\begin{equation}\label{eq:4.18}
\begin{split}
\int\limits_{0}^{1}\upphi^{2}_{{\scriptscriptstyle 1,m}}(t)\textit{d}t&=\pi^{-2}\frac{\mu^{2}(m)}{m^{2}}\sum_{k=1}^{\infty}\frac{r^{2}_{{\scriptscriptstyle2}}\big(mk^{2}\big)}{k^{4}}\\
&\geq\pi^{-2}\mu^{2}(m)\frac{r^{2}_{{\scriptscriptstyle2}}(m)}{m^{2}}\,,
\end{split}
\end{equation}
we also have the lower bound
\begin{equation}\label{eq:4.19}
\sum_{m\,\geq\,\ell}\,\int\limits_{0}^{1}\upphi^{2}_{{\scriptscriptstyle 1,m}}(t)\textit{d}t\geq\pi^{-2}\sum_{m\,\geq\,\ell}\mu^{2}(m)\frac{r^{2}_{{\scriptscriptstyle2}}(m)}{m^{2}}\,.
\end{equation}
Since
\begin{equation}\label{eq:4.20}
\sum_{m\,\leq\,Y}\mu^{2}(m)r^{2}_{{\scriptscriptstyle2}}(m)\sim\mathfrak{h}Y\log{Y}\,,
\end{equation}
as $Y\to\infty$ where $\mathfrak{h}>0$ is some constant, we obtain by partial summation
\begin{equation}\label{eq:4.21}
\big(3\sigma^{2}-5\tau^{2}\big)\sum_{m\,\geq\,\ell}\,\int\limits_{0}^{1}\upphi^{2}_{{\scriptscriptstyle 1,m}}(t)\textit{d}t\geq\big(A\sigma^{2}-B\tau^{2}\big)|\alpha|^{-1/\theta(|\alpha|)}\log{|\alpha|}\,,
\end{equation}
for some absolute constants $A,B>0$. Inserting \eqref{eq:4.21} into the RHS of \eqref{eq:4.16} we arrive at
\begin{equation}\label{eq:4.22}
\prod_{m\geq\ell}\big|\Phi_{{\scriptscriptstyle1,m}}(\alpha)\big|\leq\exp{\bigg(-\frac{\pi^{2}}{2}\big(A\sigma^{2}-B\tau^{2}\big)|\alpha|^{-1/\theta(|\alpha|)}\log{|\alpha|}\bigg)}\,.
\end{equation}
Finally, setting $C_{{\scriptscriptstyle1}}=1+\text{max}\{A, A^{-1}, B, \mathfrak{b}\}$, we deduce from \eqref{eq:4.9} and \eqref{eq:4.22}
\begin{equation}\label{eq:4.23}
\big|\Phi_{{\scriptscriptstyle1}}(\alpha)\big|\leq\exp{\bigg(-\frac{\pi^{2}}{2}\big(C_{{\scriptscriptstyle1}}^{-1}\sigma^{2}-C_{{\scriptscriptstyle1}}\tau^{2}\big)|\alpha|^{-1/\theta(|\alpha|)}\log{|\alpha|}+C_{{\scriptscriptstyle1}}|\tau|\log{|\alpha|}\bigg)}\,.
\end{equation}
%
%
%The estimate \eqref{eq:4.4} for the derivatives of $\Phi_{{\scriptscriptstyle1}}(\sigma)$ on the real line is now straightforward. For $\sigma\in\mathbb{R}$, $|\sigma|$ large, it follows from \eqref{eq:4.24} that
%
%
%\begin{equation}\label{eq:4.25}
%\begin{split}
%\underset{|\omega-\sigma|=1}{\text{max}}\,\big|\P%hi_{{\scriptscriptstyle1}}(\omega)\big|\leq\exp{\%bigg(-\frac{\pi^{2}}{2^{5}C_{{\scriptscriptstyle1%}}}|\sigma|^{2-1/\theta(|\sigma|)}\log{|\sigma|}\%bigg)}\,,
%\end{split}
%\end{equation}
%
%
%By Cauchy’s integral formula for the $j$-th derivative, we obtain
%
%
%\begin{equation}\label{eq:4.26}
%\begin{split}
%\big|\Phi_{{\scriptscriptstyle1}}^{(j)}(\sigma)\b%ig|&=\frac{j!}{2\pi}\Bigg|\,\,\int\limits_{|\omeg%a-\sigma|=1}\frac{\Phi_{{\scriptscriptstyle1}}(\o%mega)}{(\omega-\sigma)^{j+1}}\textit{d}\omega\Big%g|\\
%&\leq j!\,\underset{|\omega-\sigma|=1}{\text{max}%}\,\big|\Phi_{{\scriptscriptstyle1}}(\omega)\big|%\leq\exp{\bigg(-\frac{\pi^{2}}{2^{6}C_{{\scriptsc%riptstyle1}}}|\sigma|^{2-1/\theta(|\sigma|)}\log{%|\sigma|}\bigg)}\,,
%\end{split}
%\end{equation}
%
%
This completes the proof of $\textit{(I)}$.\\
%provided that $|\sigma|$ is sufficiently large in terms of $j$. This completes the proof of $\textit{(I)}$.\\\\
We now turn to the proof of $\textit{(II)}$. By the definition of the Fourier transform, we have for real $x$
\begin{equation}\label{eq:4.24}
\mathcal{P}_{1}(x)=\int\limits_{-\infty}^{\infty}\Phi_{{\scriptscriptstyle1}}(\sigma)\exp{\big(-2\pi ix\sigma\big)}\textit{d}\sigma\,,
\end{equation}
%
%
%By \eqref{eq:4.23} we have for real $\sigma$, $|\sigma|$ large, the bound
%
%
%\begin{equation}\label{eq:4.25}
%\big|\Phi_{{\scriptscriptstyle1}}(\sigma)\big|\le%q\exp{\bigg(-\frac{\pi^{2}}{2C_{{\scriptscriptsty%le1}}}|\sigma|^{2-1/\theta(|\sigma|)}\log{|\sigma%|}\bigg)}
%\end{equation}
%
%
and it follows from the decay estimates \eqref{eq:4.23} that $\mathcal{P}_{1}(x)$ is of class $C^{\infty}$. Let us estimate $|\mathcal{P}^{(j)}_{1}(x)|$ for real $x$, $|x|$ sufficiently large in terms $j$. Let $\tau=\tau_{x}$, $|\tau|$ large, be a real number depending $x$ to be determined later, which satisfies $\textit{sgn}(\tau)=-\text{sgn}(x)$. By Cauchy's theorem and the decay estimate \eqref{eq:4.23}, we have
\begin{equation}\label{eq:4.25}
\begin{split}
\mathcal{P}^{(j)}_{1}(x)&=
\big(-2\pi i\big)^{j}\int\limits_{-\infty}^{\infty}\sigma^{j}\Phi_{{\scriptscriptstyle1}}(\sigma)\exp{\big(-2\pi ix\sigma\big)}\textit{d}\sigma\\
&=\big(-2\pi i\big)^{j}\exp{\big(-2\pi |x||\tau|\big)}\int\limits_{-\infty}^{\infty}\big(\sigma+i\tau\big)^{j}\Phi_{{\scriptscriptstyle1}}(\sigma+i\tau)\exp{\big(-2\pi ix\sigma\big)}\textit{d}\sigma\,.
\end{split}
\end{equation}
It follows that
\begin{equation}\label{eq:4.26}
\begin{split}
&\big|\mathcal{P}^{(j)}_{1}(x)\big|\leq\big(2\pi C_{{\scriptscriptstyle1}}|\tau|\big)^{j+1}\exp{\big(-2\pi |x||\tau|\big)}\int\limits_{-\infty}^{\infty}\big(\sigma^{2}+1\big)^{j/2}\big|\Phi_{{\scriptscriptstyle1}}\big(C_{{\scriptscriptstyle1}}\tau\sigma+i\tau\big)\big|\textit{d}\sigma\,.
\end{split}
\end{equation}
We decompose the range of integration in \eqref{eq:4.26} as follows
\begin{equation}\label{eq:4.27}
\begin{split}
\int\limits_{-\infty}^{\infty}\big(\sigma^{2}+1\big)^{j/2}\big|\Phi_{{\scriptscriptstyle1}}\big(C_{{\scriptscriptstyle1}}\tau\sigma+i\tau\big)\big|\textit{d}\sigma&=\int\limits_{|\sigma|\leq\sqrt{2}}\ldots\textit{d}\sigma+\int\limits_{|\sigma|>\sqrt{2}}\ldots\textit{d}\sigma\\
&=\text{\Large{L}}_{1}+\text{\Large{L}}_{2}\,.
\end{split}
\end{equation}
In what follows, we assume that $|\tau|$ is sufficiently large in terms of $j$. In the range $|\sigma|\leq\sqrt{2}$, we have by \eqref{eq:4.23}
\begin{equation}\label{eq:4.28}
\big|\Phi_{{\scriptscriptstyle1}}\big(C_{{\scriptscriptstyle1}}\tau\sigma+i\tau\big)\big|\leq\exp{\big(4C_{{\scriptscriptstyle1}}|\tau|\log{|\tau|}\big)}\,.
\end{equation}
From \eqref{eq:4.28}, it follows that $\text{\Large{L}}_{1}$ satisfies the bound 
\begin{equation}\label{eq:4.29}
\text{\Large{L}}_{1}\leq\exp{\big(5C_{{\scriptscriptstyle1}}|\tau|\log{|\tau|}\big)}\,.
\end{equation}
Referring to \eqref{eq:4.23} once again, we have in the range $|\sigma|>\sqrt{2}$
\begin{equation}\label{eq:4.30}
\big|\Phi_{{\scriptscriptstyle1}}\big(C_{{\scriptscriptstyle1}}\tau\sigma+i\tau\big)\big|\leq\exp{\bigg(-C_{{\scriptscriptstyle1}}|\tau|\bigg\{\,\bigg|\bigg(\frac{\pi^{2}}{16C^{2}_{{\scriptscriptstyle1}}}\bigg)^{\frac{\theta(|\tau|)}{2\theta(|\tau|)-1}}|\tau|^{\frac{\theta(|\tau|)-1}{2\theta(|\tau|)-1}}\sigma\bigg|^{\frac{2\theta(|\tau|)-1}{\theta(|\tau|)}}-1\bigg\}\log{|C_{{\scriptscriptstyle1}}\tau\sigma+i\tau|}\bigg)}
\end{equation}
From \eqref{eq:4.30}, it follows that
\begin{equation}\label{eq:4.31}
\text{\Large{L}}_{2}\leq2^{j/2}\Tilde{\tau}^{j+1}\int\limits_{-\infty}^{\infty}|\sigma|^{j}\exp\bigg(-C_{{\scriptscriptstyle1}}|\tau|\Big\{|\sigma|^{\frac{2\theta(|\tau|)-1}{\theta(|\tau|)}}-1\Big\}\log{|C_{{\scriptscriptstyle1}}\tau\Tilde{\tau}\sigma+i\tau|}\bigg)\textit{d}\sigma\,,
\end{equation}
where $\Tilde{\tau}$ is given by
\begin{equation*}
\Tilde{\tau}=\bigg(\frac{16C^{2}_{{\scriptscriptstyle1}}}{\pi^{2}}\bigg)^{\frac{\theta(|\tau|)}{2\theta(|\tau|)-1}}|\tau|^{\frac{1-\theta(|\tau|)}{2\theta(|\tau|)-1}}\,.
\end{equation*}
We decompose the range of integration in \eqref{eq:4.31} as follows
\begin{equation}\label{eq:4.32}
\begin{split}
\int\limits_{-\infty}^{\infty}|\sigma|^{j}\exp\bigg(-C_{{\scriptscriptstyle1}}|\tau|\Big\{|\sigma|^{\frac{2\theta(|\tau|)-1}{\theta(|\tau|)}}-1\Big\}\log{|C_{{\scriptscriptstyle1}}\tau\Tilde{\tau}\sigma+i\tau|}\bigg)\textit{d}\sigma&=\int\limits_{|\sigma|\leq u}\ldots\textit{d}\sigma+\int\limits_{|\sigma|>u}\ldots\textit{d}\sigma\\
&=\text{\Large{L}}_{3}+\text{\Large{L}}_{4}\,,
\end{split}
\end{equation}
where $u=2^{\frac{\theta(|\tau|)}{2\theta(|\tau|)-1}}$. In the range $|\sigma|\leq u$ we estimate trivially, obtaining
\begin{equation}\label{eq:4.33}
\text{\Large{L}}_{3}\leq\exp{\big(3C_{{\scriptscriptstyle1}}|\tau|\log{|\tau|}\big)}\,.
\end{equation}
In the range $|\sigma|>u$ we have $|\sigma|^{\frac{2\theta(|\tau|)-1}{\theta(|\tau|)}}-1\geq\frac{1}{2}|\sigma|^{\frac{2\theta(|\tau|)-1}{\theta(|\tau|)}}\geq\frac{1}{2}|\sigma|^{1/2}$. It follows that 
\begin{equation}\label{eq:4.34}
\text{\Large{L}}_{4}\leq\int\limits_{-\infty}^{\infty}|\sigma|^{j}\exp\big(-|\sigma|^{1/2}\,\big)\textit{d}\sigma\,.
\end{equation}
Combining \eqref{eq:4.33} and \eqref{eq:4.34}, we see that  $\text{\Large{L}}_{2}$ satisfies the bound
\begin{equation}\label{eq:4.35}
\text{\Large{L}}_{2}\leq2^{j/2+1}\Tilde{\tau}^{j+1}\exp{\big(3C_{{\scriptscriptstyle1}}|\tau|\log{|\tau|}\big)}\,.
\end{equation}
From \eqref{eq:4.29} and \eqref{eq:4.35} we find that $\text{\Large{L}}_{1}$ dominates. By \eqref{eq:4.26} and \eqref{eq:4.27} we arrive at
\begin{equation}\label{eq:4.36}
\begin{split}
&\big|\mathcal{P}^{(j)}_{1}(x)\big|\leq\big(4\pi C_{{\scriptscriptstyle1}}|\tau|\big)^{j+1}\exp{\bigg(-2\pi|\tau|\bigg\{|x|-\frac{5C_{{\scriptscriptstyle1}}}{2\pi}\log{|\tau|}\bigg\}\bigg)\,}
\end{split}
\end{equation}
Finally, we specify $\tau$. We choose
\begin{equation*}
\tau=-\text{sgn}(x)\exp{\big(\rho|x|\big)}\quad;\quad\rho=\frac{\pi}{5C_{{\scriptscriptstyle1}}}\,.
\end{equation*}
With this choice, we have the bound (recall that $|x|$ is assumed to be large in terms of $j$)
\begin{equation}\label{eq:4.37}
\begin{split}
\big|\mathcal{P}^{(j)}_{1}(x)\big|&\leq\big(4\pi C_{{\scriptscriptstyle1}}\big)^{j+1}\exp{\bigg(-\pi|x|\exp{\big(\rho|x|\big)}+(j+1)\rho|x|\bigg)}\\
&\leq\exp{\bigg(-\frac{\pi}{2}|x|\exp{\big(\rho|x|\big)}\bigg)}\,.
\end{split}
\end{equation}
It remains to show that $\mathcal{P}_{1}(x)$ defines a probability density. This will be a consequence of the proof of Theorem 1. This concludes the proof of Proposition $3$.
\end{proof}
\section{Power moment estimates}
\noindent
Having constructed the probability density $\mathcal{P}_{1}(\alpha)$ in the previous section, our final task, before turning to the proof of the main results of this paper, is to establish the existence of all moments of the normalized error term $\widehat{\mathcal{E}}_{1}(x)$. The main result we shall set out to prove is the following.
\begin{prop}
Let $j\geq1$ be an integer. Then the $j$-th power moment of $\widehat{\mathcal{E}}_{1}(x)$ is given by
\begin{equation}\label{eq:5.1}
\lim\limits_{X\to\infty}\frac{1}{X}\int\limits_{ X}^{2X}\widehat{\mathcal{E}}^{j}_{1}(x)\textit{d}x=\sum_{s=1}^{j}\underset{\,\,\ell_{1},\,\ldots\,,\ell_{s}\geq1}{\sum_{\ell_{1}+\cdots+\ell_{s}=j}}\,\,\frac{j!}{\ell_{1}!\cdots\ell_{s}!}\underset{\,\,\,m_{s}>\cdots>m_{1}}{\sum_{m_{1},\,\ldots\,,m_{s}=1}^{\infty}}\prod_{i=1}^{s}\Xi(m_{i},\ell_{i})\,,
\end{equation}
where the series on the RHS of \eqref{eq:5.1} converges absolutely, and for integers $m,\ell\geq1$, the term $\Xi(m,\ell)$ is given as in \eqref{eq:1.4}. %The remainder term $\mathrm{R}_{j}(X)$ satisfies the bound
%
%
%\begin{equation}\label{eq:5.2}
%\big|\mathrm{R}_{j}(X)\big|\ll\left\{
%        \begin{array}{ll}
%            X^{-1+\epsilon} & ;\,j=1 
%            \\\\
%            \textbf{need to insert bound} & %;\,j=2\\\\
%            \textbf{need to insert bound} & %;\,j\geq3\,. 
%        \end{array}
%    \right.
%\end{equation}
%
%
\end{prop}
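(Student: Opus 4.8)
The plan is to compute the limiting moments $\lim_{X\to\infty}\frac{1}{X}\int_X^{2X}\widehat{\mathcal{E}}_1^j(x)\,dx$ by first replacing $\widehat{\mathcal{E}}_1(x)$ by its finite almost-periodic approximation $\Psi_M(x)=\sum_{m\le M}\upphi_{1,m}(\sqrt{m}x)$, then evaluating the moments of $\Psi_M$ via Lemma $5$ (equidistribution), and finally letting $M\to\infty$. First I would use Proposition $2$ together with the uniform bound $\big|\widehat{\mathcal{E}}_1(x)\big|\ll x^\theta$ for some small $\theta$ (or rather the bound coming from Proposition $1$ and the remark after it, which gives $\big|\widehat{\mathcal{E}}_1(x)\big|\ll_\epsilon X^\epsilon$ on $[X,2X]$ after absorbing the $T(x^2)$ term, which is $\ll x^{-\delta}$) to show that for each fixed $j$,
\begin{equation*}
\lim_{M\to\infty}\limsup_{X\to\infty}\Big|\frac{1}{X}\int_X^{2X}\widehat{\mathcal{E}}_1^j(x)\,dx-\frac{1}{X}\int_X^{2X}\Psi_M^j(x)\,dx\Big|=0\,.
\end{equation*}
This follows from the elementary inequality $|a^j-b^j|\le j|a-b|(|a|^{j-1}+|b|^{j-1})$, Hölder's inequality, the $L^1$ convergence in \eqref{eq:3.1}, and uniform $L^{2(j-1)}$-type bounds for $\widehat{\mathcal{E}}_1$ and $\Psi_M$ obtained from Hilbert's inequality as in the proof of Proposition $2$.

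Next I would expand $\Psi_M^j(x)=\big(\sum_{m\le M}\upphi_{1,m}(\sqrt{m}x)\big)^j$ by the multinomial theorem, grouping terms according to the set $\{m_1<\cdots<m_s\}$ of distinct indices appearing and their multiplicities $\ell_1,\ldots,\ell_s$ with $\ell_1+\cdots+\ell_s=j$, producing the combinatorial factor $\frac{j!}{\ell_1!\cdots\ell_s!}$. For each such term the integrand is a product $\prod_{i=1}^s\upphi_{1,m_i}^{\ell_i}(\sqrt{m_i}x)$; since the $\upphi_{1,m}$ are continuous, periodic of period $1$, and vanish unless $m$ is square-free, and since $\{\sqrt{m}:\mu^2(m)=1\}$ is linearly independent over $\mathbb{Q}$, Lemma $5$ applies and gives
\begin{equation*}
\lim_{X\to\infty}\frac{1}{X}\int_X^{2X}\prod_{i=1}^s\upphi_{1,m_i}^{\ell_i}(\sqrt{m_i}x)\,dx=\prod_{i=1}^s\int_0^1\upphi_{1,m_i}^{\ell_i}(t)\,dt\,.
\end{equation*}
It then remains to identify $\int_0^1\upphi_{1,m}^\ell(t)\,dt$ with $\Xi(m,\ell)$ from \eqref{eq:1.4}: substituting the Fourier expansion $\upphi_{1,m}(t)=-\frac{\sqrt{2}}{\pi}\frac{\mu^2(m)}{m}\sum_{k\ge1}\frac{r_2(mk^2)}{k^2}\cos(2\pi kt)$, expanding the $\ell$-th power, writing each cosine as $\frac12(e^{2\pi ikt}+e^{-2\pi ikt})$, and using $\int_0^1 e^{2\pi i(e_1k_1+\cdots+e_\ell k_\ell)t}\,dt=\mathbf{1}[e_1k_1+\cdots+e_\ell k_\ell=0]$ yields exactly the sign, the factor $(1/\sqrt{2}\pi)^\ell$, the factor $\mu^2(m)/m^\ell$, and the constrained sum over $e_i=\pm1$, $k_i\ge1$ of $\prod r_2(mk_i^2)/k_i^2$; for $\ell=1$ the single-cosine integral vanishes, giving $\Xi(m,1)=0$.

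The main obstacle is justifying the interchange of the limit $M\to\infty$ with the (now $M$-indexed) infinite sum on the right, i.e. proving absolute convergence of the series in \eqref{eq:5.1} uniformly enough to pass to the limit. For this I would bound $|\Xi(m,\ell)|$ by dropping the constraint and using $\sum_{k\ge1}r_2(mk^2)/k^2\ll r_2(m)\sum_k r_2(k^2)/k^2\ll r_2(m)$, giving $|\Xi(m,\ell)|\ll C^\ell\mu^2(m)r_2^\ell(m)/m^\ell$ with an absolute $C$; since $r_2(m)\ll m^\epsilon$, the $i=1$ factor alone (with $\ell_1\ge1$, hence the worst case $\ell_1=1$ forcing $\Xi(m_1,1)=0$, so effectively all $\ell_i\ge1$ but at least the sum must have $s<j$ unless every $\ell_i\ge2$ — more precisely, terms with any $\ell_i=1$ vanish) contributes $\sum_{m}r_2^2(m)/m^2<\infty$ per index, and the nested ordering $m_1<\cdots<m_s$ makes the multi-sum a sum of products of convergent series; combined with the fixed finite range $1\le s\le j$ and finitely many compositions $(\ell_1,\ldots,\ell_s)$, absolute convergence follows. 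The same majorant, applied with the tail $m_i>M$ for at least one $i$, shows the $M$-truncated sum converges to the full sum as $M\to\infty$, completing the proof. A secondary technical point, the uniform-in-$X$ higher-moment bounds for $\widehat{\mathcal{E}}_1$ needed in the first step, is handled exactly as the second-moment estimate in \S3 via Vaaler's lemma and Hilbert's inequality, extended to $2(j-1)$-th moments by expanding and bounding mixed exponential sums.
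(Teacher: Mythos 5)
Your strategy---approximate $\widehat{\mathcal{E}}_1$ in $L^1$ by $\Psi_M(x)=\sum_{m\le M}\upphi_{1,m}(\sqrt{m}x)$ via Proposition~2, compute the limiting moments of $\Psi_M$ via Lemma~5, identify $\int_0^1\upphi_{1,m}^\ell(t)\,\textit{d}t$ with $\Xi(m,\ell)$, and let $M\to\infty$---is a genuinely different route from the paper's. The paper bypasses Proposition~2 and Lemma~5 entirely in this proof: it works directly with the Vorono\"i cosine expansion from Proposition~1, truncates the range of $m$ to $m\le Y$ with $Y=X^{2^{2-j}}$, controls the off-diagonal contribution via the Besicovitch/Wenguang spacing lemma \eqref{eq:5.16} (if $\sum_i\textit{e}_i\sqrt{m_i}\ne0$ with $m_i\le Y$ then $|\sum_i\textit{e}_i\sqrt{m_i}|\gg Y^{1/2-2^{j-2}}$), and only then invokes linear independence of $\{\sqrt{m}:\mu^2(m)=1\}$ to recognize the surviving diagonal as the RHS of \eqref{eq:5.1}.

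The gap in your proposal is the first step, which you relegate to a ``secondary technical point.'' Passing from $\frac1X\int_X^{2X}\Psi_M^j\,\textit{d}x$ to $\frac1X\int_X^{2X}\widehat{\mathcal{E}}_1^j\,\textit{d}x$ via H\"older requires a uniform-in-$X$ bound on $\frac1X\int_X^{2X}|\widehat{\mathcal{E}}_1|^{p}\,\textit{d}x$ for some $p>2$, and for $j\ge3$ this does \emph{not} follow from the $L^2$ machinery of $\S3$. Hilbert's inequality is a bilinear bound with no $2k$-linear analogue; if you expand the $2k$-th power of $\sum_{m\le X^2}\frac{r_2(m)}{m}\cos(2\pi\sqrt m x)$ and apply the spacing bound over the full range $m\le X^2$, the off-diagonal contribution is of size $X^{-1}(X^2)^{2^{2k-2}-1/2}(\log X)^{O(1)}=X^{2^{2k-1}-2}(\log X)^{O(1)}$, which is enormous for $k\ge2$. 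The truncation $Y=X^{2^{2-j}}$ in the paper exists precisely to tame this term; it cannot be skipped. Nor does a binomial argument against the $L^1$/$L^2$ closeness of Proposition~2 work: for fixed $M$ the pointwise bound from Proposition~1 gives only $\|\widehat{\mathcal{E}}_1-\Psi_M\|_{L^\infty[X,2X]}\ll\log X$, so the crude estimate
\begin{equation*}
\frac1X\int_X^{2X}|\widehat{\mathcal{E}}_1-\Psi_M|^i\,\textit{d}x\le\|\widehat{\mathcal{E}}_1-\Psi_M\|_\infty^{\,i-2}\cdot\frac1X\int_X^{2X}|\widehat{\mathcal{E}}_1-\Psi_M|^2\,\textit{d}x
\end{equation*}
diverges as $X\to\infty$ for every $i\ge3$, and one cannot conclude $\frac1X\int(\widehat{\mathcal{E}}_1^j-\Psi_M^j)\to0$ for fixed $M$. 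An a priori higher-moment bound for $\widehat{\mathcal{E}}_1$ is essentially the content of the Proposition you are trying to prove (indeed the paper uses Proposition~4 to supply exactly such a bound in the proof of Theorem~1, equation~\eqref{eq:6.13}), so as written the argument is circular at this point. The remaining pieces of your proposal---the application of Lemma~5 to products $\prod_i\upphi_{1,m_i}^{\ell_i}(\sqrt{m_i}x)$ over distinct square-free $m_i$, the Fourier computation identifying $\int_0^1\upphi_{1,m}^\ell$ with $\Xi(m,\ell)$, and the absolute-convergence majorant---are correct, but filling the moment-transfer gap pushes you back to the paper's direct argument in any case.
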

\noindent
\textbf{Remark.} As we shall see later on, the RHS of \eqref{eq:5.1} is simply $\int_{-\infty}^{\infty}\alpha^{j}\mathcal{P}_{1}(\alpha)\textit{d}\alpha$. Also, our proof of Proposition $4$ in fact yields \eqref{eq:5.1} in a quantitative form, namely, we obtain $\frac{1}{X}\int_{ X}^{2X}\widehat{\mathcal{E}}^{j}_{1}(x)\textit{d}x=\int_{-\infty}^{\infty}\alpha^{j}\mathcal{P}_{1}(\alpha)\textit{d}\alpha+\mathrm{R}_{j}(X)$ with an explicit decay estimate for the remainder term $\mathrm{R}_{j}(X)$. However, as our sole focus here is on establishing the existence of the limit given in the LHS of \eqref{eq:5.1}, Proposition $4$ will suffice for our needs.
%The proof of Proposition $4$ will be given in $\S?$. We shall first need to establish several preparatory lemmas which will be needed in the course of the proof.
%For instance, we have $|\mathrm{R}_{1}(X)|\ll X^{-1+\epsilon}$, $|\mathrm{R}_{2}(X)|\ll X^{-?}$ and $|\mathrm{R}_{j}(X)|\ll X^{-?}$ for $j\geq3$. For $j=1,2$ one can show that the estimates for the remainder terms are essentially sharp. For $j\geq3$ we have made no particular effort in establishing 
%
%
\begin{proof}
We split the proof into three cases, depending on whether $j=1$, $j=2$ or $j\geq3$.
\textbf{Case 1.} $j=1$. Since by definition, $\Xi(m,1)=0$ for any integer $m\geq1$, we need to show that $\frac{1}{X}\int_{X}^{2X}\widehat{\mathcal{E}}_{1}(x)\textit{d}x\to0$ as $X\to\infty$. By Proposition $1$ and Lemma $4$, we have
\begin{equation}\label{eq:5.2}
\frac{1}{X}\int\limits_{ X}^{2X}\widehat{\mathcal{E}}_{1}(x)\textit{d}x=-\frac{\sqrt{2}}{\pi}\sum_{m\,\leq\,X^{2}}\frac{r_{{\scriptscriptstyle 2}}(m)}{m}\frac{1}{X}\int\limits_{ X}^{2X}\cos{\big(2\pi\sqrt{m}x\big)}\textit{d}x+O_{\epsilon}\big(X^{-1+\epsilon}\big)\,.
\end{equation}
It follows that
\begin{equation}\label{eq:5.3}
\begin{split}
\bigg|\frac{1}{X}\int\limits_{ X}^{2X}\widehat{\mathcal{E}}_{1}(x)\textit{d}x\bigg|&\ll_{\epsilon}\frac{1}{X}\sum_{m=1}^{\infty}\frac{r_{{\scriptscriptstyle 2}}(m)}{m^{3/2}}+X^{-1+\epsilon}\\
&\ll_{\epsilon} X^{-1+\epsilon}\,.
\end{split}
\end{equation}
This proves \eqref{eq:5.1} in the case where $j=1$.\\
\textbf{Case 2.} $j=2$. By Proposition $1$ and Lemma $4$, we have
\begin{equation}\label{eq:5.4}
\frac{1}{X}\int\limits_{ X}^{2X}\widehat{\mathcal{E}}^{2}_{1}(x)\textit{d}x=\frac{2}{\pi^{2}}\sum_{m,\,n\,\leq\,X^{2}}\frac{r_{{\scriptscriptstyle 2}}(m)}{m}\frac{r_{{\scriptscriptstyle 2}}(n)}{n}\frac{1}{X}\int\limits_{ X}^{2X}\cos{\big(2\pi\sqrt{m}x\big)}\cos{\big(2\pi\sqrt{n}x\big)}\textit{d}x+O_{\epsilon}\big(X^{-1+\epsilon}\big)\,,
\end{equation}
Now, we have
\begin{equation}\label{eq:5.5}
\begin{split}
\frac{1}{X}\int\limits_{ X}^{2X}\cos{\big(2\pi\sqrt{m}x\big)}\cos{\big(2\pi\sqrt{n}x\big)}\textit{d}x&=\frac{1}{2}\mathds{1}_{m=n}+\frac{1}{4\pi X}\mathds{1}_{m\neq n}\frac{\sin{\big(2\pi(\sqrt{m}-\sqrt{n}\,)x\big)}}{\sqrt{m}-\sqrt{n}}\Bigg|^{x=2X}_{x=X}\\
&\,\,\,\,\,\,+O\bigg(\frac{1}{X(mn)^{1/4}}\bigg)\\
&=\frac{1}{2}\mathds{1}_{m=n}+\frac{1}{4\pi X}\mathds{1}_{m\neq n}\text{\Large{K}}_{(\sqrt{m},\sqrt{n}\,)}+O\bigg(\frac{1}{X(mn)^{1/4}}\bigg)\,,
\end{split}
\end{equation}
say. Inserting \eqref{eq:5.5} into the RHS of \eqref{eq:5.4}, we obtain
\begin{equation}\label{eq:5.6}
\frac{1}{X}\int\limits_{ X}^{2X}\widehat{\mathcal{E}}^{2}_{1}(x)\textit{d}x=\frac{1}{\pi^{2}}\sum_{m\,\leq\,X^{2}}\frac{r^{2}_{{\scriptscriptstyle 2}}(m)}{m^{2}}+\frac{1}{2\pi^{3}X}\sum_{m\neq n\,\leq\,X^{2}}\frac{r_{{\scriptscriptstyle 2}}(m)}{m}\frac{r_{{\scriptscriptstyle 2}}(n)}{n}\text{\Large{K}}_{(\sqrt{m},\sqrt{n}\,)}+O_{\epsilon}\big(X^{-1+\epsilon}\big)\,,
\end{equation}
To estimate the off-diagonal terms, we use the identity $\sin{t}=\frac{1}{2\pi i}\big(\exp{(it)}-\exp{(-it})\big)$ and then apply Hilbert's inequality, obtaining
\begin{equation}\label{eq:5.7}
\bigg|\sum_{m\neq n\,\leq\,X^{2}}\frac{r_{{\scriptscriptstyle 2}}(m)}{m}\frac{r_{{\scriptscriptstyle 2}}(n)}{n}\text{\Large{K}}_{(\sqrt{m},\sqrt{n}\,)}\bigg|\ll\sum_{m=1}^{\infty}r^{2}_{{\scriptscriptstyle2}}(m)m^{-3/2}\,.
\end{equation}
Inserting \eqref{eq:5.7} into the RHS of \eqref{eq:5.6}, we arrive at
\begin{equation}\label{eq:5.8}
\begin{split}
\frac{1}{X}\int\limits_{ X}^{2X}\widehat{\mathcal{E}}^{2}_{1}(x)\textit{d}x&=\frac{1}{\pi^{2}}\sum_{m\,\leq\,X^{2}}\frac{r^{2}_{{\scriptscriptstyle 2}}(m)}{m^{2}}+O_{\epsilon}\big(X^{-1+\epsilon}\big)\\
&=\frac{1}{\pi^{2}}\sum_{m=1}^{\infty}\frac{r^{2}_{{\scriptscriptstyle 2}}(m)}{m^{2}}+O_{\epsilon}\big(X^{-1+\epsilon}\big)\,.
\end{split}
\end{equation}
Recalling that $\Xi(m,1)=0$, it follows that the RHS of \eqref{eq:5.1} in the case where $j=2$ is given by
\begin{equation}\label{eq:5.9}
\begin{split}
\sum_{m=1}^{\infty}\Xi(m,2)&=\frac{1}{2\pi^{2}}\sum_{m=1}^{\infty}\frac{\mu^{2}(m)}{m^{2}}\sum_{\textit{e}_{1},\textit{e}_{2}=\pm1}\underset{\textit{e}_{1}k_{1}+\textit{e}_{2}k_{2}=0}{\sum_{k_{1}, k_{2}=1}^{\infty}}\prod_{i=1}^{2}\frac{r_{{\scriptscriptstyle2}}\big(mk^{2}_{i}\big)}{k_{i}^{2}}\\
&=\frac{1}{\pi^{2}}\sum_{m,k=1}^{\infty}\mu^{2}(m)\frac{r^{2}_{{\scriptscriptstyle2}}\big(mk^{2}\big)}{(mk^{2})^{2}}=\frac{1}{\pi^{2}}\sum_{m=1}^{\infty}\frac{r^{2}_{{\scriptscriptstyle 2}}(m)}{m^{2}}\,.
\end{split}
\end{equation}
This proves \eqref{eq:5.1} in the case where $j=2$.\\
\textbf{Case 3.} $j\geq3$. In what follows, all implied constants in the Big $O$ notation are allowed to depend on $j$. By Proposition $1$ and Lemma $4$, together with the trivial bound $|x^{-2}T\big(x^{2}\big)|\ll1$, we have
\begin{equation}\label{eq:5.10}
\frac{1}{X}\int\limits_{ X}^{2X}\widehat{\mathcal{E}}^{j}_{1}(x)\textit{d}x=\bigg(-\frac{\sqrt{2}}{\pi}\bigg)^{j}\frac{1}{X}\int\limits_{ X}^{2X}\bigg(\,\sum_{m\,\leq\,X^{2}}\frac{r_{{\scriptscriptstyle 2}}(m)}{m}\cos{\big(2\pi\sqrt{m}x\big)}\bigg)^{j}\textit{d}x+O_{\epsilon}\big(X^{-1+\epsilon}\big)\,.
\end{equation}
Let $1\leq Y\leq X^{2}$ be a large parameter to be determined later. In the range $X<x<2X$, we have
\begin{equation}\label{eq:5.11}
\begin{split}
&\Bigg(\,\sum_{m\,\leq\,X^{2}}\frac{r_{{\scriptscriptstyle 2}}(m)}{m}\cos{\big(2\pi\sqrt{m}x\big)}\Bigg)^{j}\\
&=\Bigg(\,\sum_{m\,\leq\,Y}\frac{r_{{\scriptscriptstyle 2}}(m)}{m}\cos{\big(2\pi\sqrt{m}x\big)}\Bigg)^{j}+O\bigg((\log{X})^{j-1}\bigg|\sum_{Y\,<\,m\,\leq\,X^{2}}\frac{r_{{\scriptscriptstyle 2}}(m)}{m}\exp{\big(2\pi i\sqrt{m}x\big)}\bigg|\,\bigg)\,.
\end{split}
\end{equation}
We first estimate the mean-square of the second summand appearing on the RHS of \eqref{eq:5.11}. We have
\begin{equation}\label{eq:5.12}
\begin{split}
&\frac{1}{X}\int\limits_{ X}^{2X}\bigg|\sum_{Y\,<\,m\,\leq\,X^{2}}\frac{r_{{\scriptscriptstyle 2}}(m)}{m}\exp{\big(2\pi i\sqrt{m}x\big)}\bigg|^{2}\textit{d}x\\
&=\sum_{Y\,<\,m\,\leq\,X^{2}}\frac{r^{2}_{{\scriptscriptstyle 2}}(m)}{m^{2}}+\frac{1}{2\pi iX}\sum_{Y\,<\,m\neq n\,\leq\,X^{2}}\frac{r_{{\scriptscriptstyle 2}}(m)}{m}\frac{r_{{\scriptscriptstyle 2}}(n)}{n}\frac{\exp{\big(2\pi i(\sqrt{m}-\sqrt{n}x)\big)}}{\sqrt{m}-\sqrt{n}}\bigg|^{x=2X}_{x=X}\,.
\end{split}
\end{equation}
Applying Hilbert's inequality, we find that
\begin{equation}\label{eq:5.13}
\begin{split}
\Bigg|\,\sum_{Y\,<\,m\neq n\,\leq\,X^{2}}\frac{r_{{\scriptscriptstyle 2}}(m)}{m}\frac{r_{{\scriptscriptstyle 2}}(n)}{n}\frac{\exp{\big(2\pi i(\sqrt{m}-\sqrt{n}x)\big)}}{\sqrt{m}-\sqrt{n}}\bigg|^{x=2X}_{x=X}\,\Bigg|&\ll\sum_{m\,>Y}r^{2}_{{\scriptscriptstyle2}}(m)m^{-3/2}\\
&\ll Y^{-1/2}\log{Y}\,.
\end{split}
\end{equation}
Inserting \eqref{eq:5.13} into the RHS of \eqref{eq:5.12}, we obtain
\begin{equation}\label{eq:5.14}
\begin{split}
\frac{1}{X}\int\limits_{ X}^{2X}\bigg|\sum_{Y\,<\,m\,\leq\,X^{2}}\frac{r_{{\scriptscriptstyle 2}}(m)}{m}\exp{\big(2\pi i\sqrt{m}x\big)}\bigg|^{2}\textit{d}x&=\sum_{Y\,<\,m\,\leq\,X^{2}}\frac{r^{2}_{{\scriptscriptstyle 2}}(m)}{m^{2}}+O\big(X^{-1}Y^{-1/2}\log{Y}\big)\\
&\ll Y^{-1}\log{Y}+X^{-1}Y^{-1/2}\log{Y}\\
&\ll Y^{-1}\log{Y}\,.
\end{split}
\end{equation}
Integrating both sides of \eqref{eq:5.11}, we have by \eqref{eq:5.10}, \eqref{eq:5.14} and Cauchy–Schwarz inequality
\begin{equation}\label{eq:5.15}
\begin{split}
\frac{1}{X}\int\limits_{ X}^{2X}\widehat{\mathcal{E}}^{j}_{1}(x)\textit{d}x&=\bigg(-\frac{\sqrt{2}}{\pi}\bigg)^{j}\frac{1}{X}\int\limits_{ X}^{2X}\bigg(\,\sum_{m\,\leq\,Y}\frac{r_{{\scriptscriptstyle 2}}(m)}{m}\cos{\big(2\pi\sqrt{m}x\big)}\bigg)^{j}\textit{d}x+O_{\epsilon}\big(X^{\epsilon}Y^{-1/2}\big)\\
&=\bigg(\frac{-1}{\sqrt{2}\pi}\bigg)^{j}\sum_{\textit{e}_{1},\ldots,\textit{e}_{j}=\pm1}\sum_{m_{1},\ldots,m_{j}\leq Y}\prod_{i=1}^{j}\frac{r_{{\scriptscriptstyle 2}}(m_{i})}{m_{i}}\frac{1}{X}\int\limits_{ X}^{2X}\cos{\bigg(2\pi\Big(\sum_{i=1}^{j}\textit{e}_{i}\sqrt{m_{i}}\Big)x\bigg)}\textit{d}x\\
&\,\,\,\,\,\,+O_{\epsilon}\big(X^{\epsilon}Y^{-1/2}\big)\,.
\end{split}
\end{equation}
Before we proceed to evaluate the RHS of \eqref{eq:5.15}, we need the following result (see \cite{Wenguang}, $\S2$ Lemma $2.2$). Let $\textit{e}_{1},\ldots,\textit{e}_{j}=\pm1$, and suppose that $m_{1},\ldots,m_{j}\leq Y$ are integers. Then it holds %satisfying the condition $\sum_{i=1}^{j}\textit{e}_{i}\sqrt{m_{i}}\neq0$.
\begin{equation}\label{eq:5.16}
\sum_{i=1}^{j}\textit{e}_{i}\sqrt{m_{i}}\neq0\Longrightarrow\bigg|\sum_{i=1}^{j}\textit{e}_{i}\sqrt{m_{i}}\bigg|\gg Y^{1/2-2^{j-2}}\,,
\end{equation}
where the implied constant depends only $j$. It follows from \eqref{eq:5.16} that
\begin{equation}\label{eq:5.17}
\frac{1}{X}\int\limits_{ X}^{2X}\cos{\bigg(2\pi\Big(\,\sum_{i=1}^{j}\textit{e}_{i}\sqrt{m_{i}}\Big)x\bigg)}\textit{d}x=\left\{
        \begin{array}{ll}
            1 & ;\,\sum_{i=1}^{j}\textit{e}_{i}\sqrt{m_{i}}=0 
            \\\\
            O\big(X^{-1}Y^{2^{j-2}-1/2}\big) & ;\, \sum_{i=1}^{j}\textit{e}_{i}\sqrt{m_{i}}\neq0 \,.
        \end{array}
    \right.
\end{equation}
The estimate \eqref{eq:5.17} in the case where $\sum_{i=1}^{j}\textit{e}_{i}\sqrt{m_{i}}\neq0$ is somewhat wasteful, but nevertheless it will suffice for us. Inserting \eqref{eq:5.17} into the RHS of \eqref{eq:5.15}, we obtain
\begin{equation}\label{eq:5.18}
\begin{split}
\frac{1}{X}\int\limits_{ X}^{2X}\widehat{\mathcal{E}}^{j}_{1}(x)\textit{d}x&=\bigg(\frac{-1}{\sqrt{2}\pi}\bigg)^{j}\sum_{\textit{e}_{1},\ldots,\textit{e}_{j}=\pm1}\,\,\underset{\sum_{i=1}^{j}\textit{e}_{i}\sqrt{m_{i}}=0}{\sum_{m_{1},\ldots,m_{j}\,\leq\,Y}}\prod_{i=1}^{j}\frac{r_{{\scriptscriptstyle 2}}(m_{i})}{m_{i}}+O_{\epsilon}\Big(X^{\epsilon}Y^{-1/2}\Big\{1+X^{-1}Y^{2^{j-2}}\Big\}\Big)\,.
\end{split}
\end{equation}
It remains to estimate the first summand appearing on the RHS of \eqref{eq:5.18}.  Let $\textit{e}_{1},\ldots,\textit{e}_{j}=\pm1$. We have 
\begin{equation}\label{eq:5.19}
\begin{split}
\underset{\sum_{i=1}^{j}\textit{e}_{i}\sqrt{m_{i}}=0}{\sum_{m_{1},\ldots,m_{j}\,\leq\,Y}}\prod_{i=1}^{j}\frac{r_{{\scriptscriptstyle 2}}(m_{i})}{m_{i}}=\sum_{m_{1},\ldots,m_{j}\,\leq\,Y}\prod_{i=1}^{j}\frac{\mu^{2}(m_{i})}{m_{i}}\,\,\underset{\sum_{i=1}^{j}\textit{e}_{i}k_{i}\sqrt{m_{i}}=0}{\sum_{k_{1}\,\leq\,\sqrt{Y/m_{1}},\ldots,k_{j}\,\leq\,\sqrt{Y/m_{j}}}}\,\,\prod_{i=1}^{j}\frac{r_{{\scriptscriptstyle 2}}\big(m_{i}k^{2}_{i}\big)}{k^{2}_{i}}\,.
\end{split}
\end{equation}
Now, since the elements of the set $\mathscr{B}=\{\sqrt{m}:|\mu(m)|=1\}$ are are linearly independent over $\mathbb{Q}$, it follows that the the relation $\sum_{i=1}^{j}\textit{e}_{i}k_{i}\sqrt{m_{i}}=0$ with $m_{1},\ldots,m_{j}$ square-free, is equivalent to the relation $\sum_{i\in S_{\ell}}\textit{e}_{i}k_{i}=0$ for $1\leq\ell\leq s$, where $\biguplus_{\ell=1}^{s} S_{\ell}=\{1,,\ldots,j\}$ is a partition. Multiplying \eqref{eq:5.19} by $\big(-1/\sqrt{2}\pi\big)^{j}$, summing over all $\textit{e}_{1},\ldots,\textit{e}_{j}=\pm1$ and rearranging the terms in ascending order, it follows that
\begin{equation}\label{eq:5.20}
\begin{split}
\bigg(\frac{-1}{\sqrt{2}\pi}\bigg)^{j}\sum_{\textit{e}_{1},\ldots,\textit{e}_{j}=\pm1}&\,\,\underset{\sum_{i=1}^{j}\textit{e}_{i}\sqrt{m_{i}}=0}{\sum_{m_{1},\ldots,m_{j}\,\leq\,Y}}\prod_{i=1}^{j}\frac{r_{{\scriptscriptstyle 2}}(m_{i})}{m_{i}}\\
&=\sum_{s=1}^{j}\underset{\,\,\ell_{1},\,\ldots\,,\ell_{s}\geq1}{\sum_{\ell_{1}+\cdots+\ell_{s}=j}}\,\,\frac{j!}{\ell_{1}!\cdots\ell_{s}!}\sum_{m_{1}<\cdots<m_{s}\leq Y}\prod_{i=1}^{s}\Xi\big(m_{i},\ell_{i};\sqrt{Y/m_{i}}\,\big)\,.
\end{split}
\end{equation}
where for $y\geq1$, and integers $m,\ell\geq1$, the term $\Xi(m,\ell;y)$ is given
\begin{equation}\label{eq:5.21}
\Xi(m,\ell;y)=(-1)^{\ell}\bigg(\frac{1}{\sqrt{2}\pi}\bigg)^{\ell}\frac{\mu^{2}(m)}{m^{\ell}}\sum_{\textit{e}_{1},\ldots,\textit{e}_{\ell}=\pm1}\underset{\textit{e}_{1}k_{1}+\cdots+\textit{e}_{\ell}k_{\ell}=0}{\sum_{k_{1},\ldots,k_{\ell}\leq y}}\prod_{i=1}^{\ell}\frac{r_{{\scriptscriptstyle2}}\big(mk^{2}_{i}\big)}{k_{i}^{2}}\,.
\end{equation}
Denoting by $\tau(\cdot)$ the divisor function, we have for $m$ square-free
\begin{equation}\label{eq:5.22}
\sum_{k\,>y}\frac{r_{{\scriptscriptstyle2}}\big(mk^{2}\big)}{k^{2}}\ll r_{{\scriptscriptstyle2}}(m)\sum_{k\,>y}\frac{\tau^{2}(k)}{k^{2}}\ll r_{{\scriptscriptstyle2}}(m)y^{-1}\big(\log{2y}\big)^{3}\,.
\end{equation}
Using \eqref{eq:5.22} repeatedly, it follows that
\begin{equation}\label{eq:5.23}
\Xi(m,\ell;y)=\Xi(m,\ell)+O\bigg(\frac{\mu^{2}(m)r^{\ell}_{{\scriptscriptstyle2}}(m)}{m^{\ell}}y^{-1}\big(\log{2y}\big)^{3}\bigg)\,,
\end{equation}
where $\Xi(m,\ell)$ is given as in \eqref{eq:1.4}. Using \eqref{eq:5.23} repeatedly, and noting that $\Xi(m,1;y)=0$, we have by \eqref{eq:5.20}
\begin{equation}\label{eq:5.24}
\begin{split}
&\bigg(\frac{-1}{\sqrt{2}\pi}\bigg)^{j}\sum_{\textit{e}_{1},\ldots,\textit{e}_{j}=\pm1}\,\,\underset{\sum_{i=1}^{j}\textit{e}_{i}\sqrt{m_{i}}=0}{\sum_{m_{1},\ldots,m_{j}\,\leq\,Y}}\prod_{i=1}^{j}\frac{r_{{\scriptscriptstyle 2}}(m_{i})}{m_{i}}\\
&=\sum_{s=1}^{j}\underset{\,\,\ell_{1},\,\ldots\,,\ell_{s}\geq1}{\sum_{\ell_{1}+\cdots+\ell_{s}=j}}\,\,\frac{j!}{\ell_{1}!\cdots\ell_{s}!}\sum_{m_{1}<\cdots<m_{s}\leq Y}\prod_{i=1}^{s}\Xi(m_{i},\ell_{i})+O\big(Y^{-1/2}(\log{Y})^{3}\big)\\
&=\sum_{s=1}^{j}\underset{\,\,\ell_{1},\,\ldots\,,\ell_{s}\geq1}{\sum_{\ell_{1}+\cdots+\ell_{s}=j}}\,\,\frac{j!}{\ell_{1}!\cdots\ell_{s}!}\underset{\,\,\,m_{s}>\cdots>m_{1}}{\sum_{m_{1},\,\ldots\,,m_{s}=1}^{\infty}}\prod_{i=1}^{s}\Xi(m_{i},\ell_{i})+O\big(Y^{-1/2}(\log{Y})^{3}\big)\,,
\end{split}
\end{equation}
where the series appearing on the RHS of \eqref{eq:5.24} converges absolutely. Finally, inserting \eqref{eq:5.24} into the RHS of \eqref{eq:5.18} and making the choice $Y=X^{2^{2-j}}$, we arrive at
\begin{equation}\label{eq:5.25}
\begin{split}
\frac{1}{X}\int\limits_{ X}^{2X}\widehat{\mathcal{E}}^{j}_{1}(x)\textit{d}x&=\sum_{s=1}^{j}\underset{\,\,\ell_{1},\,\ldots\,,\ell_{s}\geq1}{\sum_{\ell_{1}+\cdots+\ell_{s}=j}}\,\,\frac{j!}{\ell_{1}!\cdots\ell_{s}!}\underset{\,\,\,m_{s}>\cdots>m_{1}}{\sum_{m_{1},\,\ldots\,,m_{s}=1}^{\infty}}\prod_{i=1}^{s}\Xi(m_{i},\ell_{i})+O_{\epsilon}\Big(X^{-\eta_{j}+\epsilon}\Big)\,,
\end{split}
\end{equation}
where $\eta_{j}=2^{1-j}$. This settles the proof in the case where $j\geq3$. The proof of Proposition $4$ is therefore complete. 
\end{proof}
\section{Proof of the main results: Theorem $1$ \& $2$}
\noindent
Collecting the results from the previous sections, we are now in a position to present the proof of the main results. We begin with the proof of Theorem 1.
%
%infinitely differentiable function with a compact support
\begin{proof}(Theorem $1$).
We shall first prove \eqref{eq:1.1} in the particular case where $\mathcal{F}\in C^{\infty}_{0}(\mathbb{R})$, that is, $\mathcal{F}$ is an infinitely differentiable function having compact support.\\
To that end, let $\mathcal{F}$ be test function as above, and note that the assumptions on $\mathcal{F}$ imply that $|\mathcal{F}(w)-\mathcal{F}(y)|\leq c_{\mathcal{F}}|w-y|$ for all $w,y$, where $c_{\mathcal{F}}>0$ is some constant which depends on $\mathcal{F}$. It follows that for any integer $M\geq1$ and any $X>0$, we have
\begin{equation}\label{eq:6.1}
\frac{1}{X}\int\limits_{ X}^{2X}\mathcal{F}\big(\widehat{\mathcal{E}}_{1}(x)\big)\textit{d}x=\frac{1}{X}\int\limits_{ X}^{2X}\mathcal{F}\Big(\sum_{m\leq M}\upphi_{{\scriptscriptstyle 1,m}}\big(\sqrt{m}x\big)\Big)\textit{d}x+\mathscr{E}_{\mathcal{F}}\big(X,M\big)\,,
\end{equation}
where $\upphi_{{\scriptscriptstyle 1,m}}(t)$ is defined as in Proposition $2$, and the remainder term $\mathscr{E}_{\mathcal{F}}\big(X,M\big)$ satisfies the bound
\begin{equation}\label{eq:6.2}
\big|\mathscr{E}_{\mathcal{F}}\big(X,M\big)\big|\leq c_{\mathcal{F}}\,\frac{1}{X}\int\limits_{X}^{2X}\Big|\widehat{\mathcal{E}}_{1}(x)-\sum_{m\leq M}\upphi_{{\scriptscriptstyle 1,m}}\big(\sqrt{m}x\big)\Big|\textit{d}x\,. 
\end{equation}
By Proposition $2$ it follows that
\begin{equation}\label{eq:6.3}
\lim_{M\to\infty}\limsup_{X\to\infty}\big|\mathscr{E}_{\mathcal{F}}\big(X,M\big)\big|=0\,.
\end{equation}
Denoting by $\widehat{\mathcal{F}}$ the Fourier transform of $\mathcal{F}$, the assumptions on the test function $\mathcal{F}$ allows us to write 
\begin{equation}\label{eq:6.4}
\frac{1}{X}\int\limits_{ X}^{2X}\mathcal{F}\Big(\sum_{m\,\leq\,M}\upphi_{{\scriptscriptstyle 1,m}}\big(\sqrt{m}x\big)\Big)\textit{d}x=\int\limits_{-\infty}^{\infty}\widehat{\mathcal{F}}(\alpha)\mathfrak{M}_{X}(\alpha;M)\textit{d}\alpha\,,
\end{equation}
where $\mathfrak{M}_{X}(\alpha;M)$ is defined at the beginning of $\S4$. Letting $X\to\infty$ in \eqref{eq:6.4}, we have by \eqref{eq:4.2} in Proposition $3$ together with an application of Lebesgue’s dominated convergence theorem
\begin{equation}\label{eq:6.5}
\lim_{X\to\infty}\frac{1}{X}\int\limits_{ X}^{2X}\mathcal{F}\Big(\sum_{m\,\leq\,M}\upphi_{{\scriptscriptstyle 1,m}}\big(\sqrt{m}x\big)\Big)\textit{d}x=\int\limits_{-\infty}^{\infty}\widehat{\mathcal{F}}(\alpha)\mathfrak{M}(\alpha;M)\textit{d}\alpha\,,
\end{equation}
where $\mathfrak{M}(\alpha;M)=\prod_{m\,\leq\,M}\Phi_{{\scriptscriptstyle1,m}}(\alpha)$, and $\Phi_{{\scriptscriptstyle1,m}}(\alpha)$ is defined at the beginning of $\S4$. Letting $M\to\infty$ in \eqref{eq:6.5}, and recalling that $\mathcal{P}_{1}(\alpha)=\widehat{\Phi}_{{\scriptscriptstyle1}}(\alpha)$ where $\Phi_{{\scriptscriptstyle1}}(\alpha)=\prod_{m=1}^{\infty}\Phi_{{\scriptscriptstyle1,m}}(\alpha)$, we have by Lebesgue’s dominated convergence theorem
%
%\int\limits_{-\infty}^{\infty}\mathcal{F}(\alpha)\mathcal{P}_{q}(\alpha)\textit{d}\alpha
\begin{equation}\label{eq:6.6}
\begin{split}
\lim_{M\to\infty}\lim_{X\to\infty}\frac{1}{X}\int\limits_{ X}^{2X}\mathcal{F}\Big(\sum_{m\,\leq\,M}\upphi_{{\scriptscriptstyle 1,m}}\big(\sqrt{m}x\big)\Big)\textit{d}x&=\int\limits_{-\infty}^{\infty}\widehat{\mathcal{F}}(\alpha)\Phi_{{\scriptscriptstyle1}}(\alpha)\textit{d}\alpha\\
&=\int\limits_{-\infty}^{\infty}\mathcal{F}(\alpha)\mathcal{P}_{1}(\alpha)\textit{d}\alpha\,,
\end{split}
\end{equation}
where in the second equality we made use of Parseval's theorem which is justified by the decay estimate \eqref{eq:4.4} for $\mathcal{P}^{(j)}_{1}(\alpha)$ with $\alpha$ real stated in Proposition $3$. It follows from \eqref{eq:6.4}, \eqref{eq:6.5} and \eqref{eq:6.6}  that
\begin{equation}\label{eq:6.7}
\begin{split}
\frac{1}{X}\int\limits_{ X}^{2X}\mathcal{F}\Big(\sum_{m\,\leq\,M}\upphi_{{\scriptscriptstyle 1,m}}\big(\sqrt{m}x\big)\Big)\textit{d}x=\int\limits_{-\infty}^{\infty}\mathcal{F}(\alpha)\mathcal{P}_{1}(\alpha)\textit{d}\alpha+\mathscr{E}^{\flat}_{\mathcal{F}}\big(X,M\big)\,,
\end{split}
\end{equation}
where the remainder term $\mathscr{E}^{\flat}_{\mathcal{F}}\big(X,M\big)$ satisfies
\begin{equation}\label{eq:6.8}
\lim_{M\to\infty}\lim_{X\to\infty}\big|\mathscr{E}^{\flat}_{\mathcal{F}}\big(X,M\big)\big|=0\,.
\end{equation}
Inserting \eqref{eq:6.7} into the RHS of \eqref{eq:6.1}, we deduce from \eqref{eq:6.3} and \eqref{eq:6.8} that
\begin{equation}\label{eq:6.9}
\begin{split}
\limsup_{X\to\infty}\bigg|\frac{1}{X}\int\limits_{ X}^{2X}\mathcal{F}\big(\widehat{\mathcal{E}}_{1}(x)\big)\textit{d}x&-\int\limits_{-\infty}^{\infty}\mathcal{F}(\alpha)\mathcal{P}_{1}(\alpha)\textit{d}\alpha\bigg|\\
&\leq\lim_{M\to\infty}\lim_{X\to\infty}\big|\mathscr{E}^{\flat}_{\mathcal{F}}\big(X,M\big)\big|+\lim_{M\to\infty}\limsup_{X\to\infty}\big|\mathscr{E}_{\mathcal{F}}\big(X,M\big)\big|=0\,.
\end{split}
\end{equation}
We conclude that
\begin{equation}\label{eq:6.10}
\lim\limits_{X\to\infty}\frac{1}{X}\int\limits_{X}^{2X}\mathcal{F}\big(\widehat{\mathcal{E}}_{1}(x)\big)\textit{d}x=\int\limits_{-\infty}^{\infty}\mathcal{F}(\alpha)\mathcal{P}_{1}(\alpha)\textit{d}\alpha\,,
\end{equation}
whenever $\mathcal{F}\in C^{\infty}_{0}(\mathbb{R})$.\\
The result \eqref{eq:6.10} extends easily to include the class $C_{0}(\mathbb{R})$ of continuous functions with compact support. To see this, fix a smooth bump function $\varphi(y)\geq0$ supported in $[-1,1]$ having total mass $1$, and for an integer $n\geq1$ let $\varphi_{n}(y)=n\varphi(ny)$. Given $\mathcal{F}\in C_{0}(\mathbb{R})$, let $\mathcal{F}_{n}=\mathcal{F}\star\varphi_{n}\in C^{\infty}_{0}(\mathbb{R})$, where $\star$ denotes the Euclidean convolution operator. We then have
\begin{equation}\label{eq:6.11}
\begin{split}
\bigg|\frac{1}{X}\int\limits_{X}^{2X}\mathcal{F}\big(&\widehat{\mathcal{E}}_{1}(x)\big)\textit{d}x-\int\limits_{-\infty}^{\infty}\mathcal{F}(\alpha)\mathcal{P}_{1}(\alpha)\textit{d}\alpha\bigg|\\
&\leq\bigg|\frac{1}{X}\int\limits_{X}^{2X}\mathcal{F}_{n}\big(\widehat{\mathcal{E}}_{1}(x)\big)\textit{d}x-\int\limits_{-\infty}^{\infty}\mathcal{F}(\alpha)\mathcal{P}_{1}(\alpha)\textit{d}\alpha\bigg|+\underset{y\in\mathbb{R}}{\textit{max}}\,\,\big|\mathcal{F}(y)-\mathcal{F}_{n}(y)\big|\,.
\end{split}
\end{equation}
Since $\underset{y\in\mathbb{R}}{\textit{max}}\,\,\big|\mathcal{F}(y)-\mathcal{F}_{n}(y)\big|\to0$ as $n\to\infty$, and
\begin{equation}\label{eq:6.12}
\begin{split}
\int\limits_{-\infty}^{\infty}\mathcal{F}(\alpha)\mathcal{P}_{1}(\alpha)\textit{d}\alpha&=\lim_{n\to\infty}\int\limits_{-\infty}^{\infty}\mathcal{F}_{n}(\alpha)\mathcal{P}_{1}(\alpha)\textit{d}\alpha\\
&=\lim_{n\to\infty}\lim\limits_{X\to\infty}\frac{1}{X}\int\limits_{X}^{2X}\mathcal{F}_{n}\big(\widehat{\mathcal{E}}_{1}(x)\big)\textit{d}x\,,
\end{split}
\end{equation}
it follows that \eqref{eq:6.10} holds whenever $\mathcal{F}\in C_{0}(\mathbb{R})$.\\
Let us now consider the general case in which $\mathcal{F}$ is a continuous function of polynomial growth, say $|\mathcal{F}(\alpha)|\ll|\alpha|^{j}$ for all sufficiently large $|\alpha|$, where $j\geq1$ is some integer. Let $\psi\in C^{\infty}_{0}(\mathbb{R})$ satisfy $0\leq\psi(y)\leq1$, $\psi(y)=1$ for $|y|\leq1$, and set $\psi_{n}(y)=\psi(y/n)$. Define $\mathcal{F}_{n}(y)=\mathcal{F}(y)\psi_{n}(y)\in C_{0}(\mathbb{R})$. For $n$ sufficiently large we have by Proposition $4$
\begin{equation}\label{eq:6.13}
\begin{split}
\frac{1}{X}\int\limits_{X}^{2X}\mathcal{F}\big(\widehat{\mathcal{E}}_{1}(x)\big)\textit{d}x&=\frac{1}{X}\int\limits_{X}^{2X}\mathcal{F}_{n}\big(\widehat{\mathcal{E}}_{1}(x)\big)\textit{d}x+O\Bigg(\frac{1}{n^{j}}\frac{1}{X}\int\limits_{ X}^{2X}\widehat{\mathcal{E}}^{2j}_{1}(x)\textit{d}x\Bigg)\\
&=\frac{1}{X}\int\limits_{X}^{2X}\mathcal{F}_{n}\big(\widehat{\mathcal{E}}_{1}(x)\big)\textit{d}x+O\bigg(\frac{1}{n^{j}}\bigg)\,,
\end{split}
\end{equation}
where the implied constant depends only on $j$ and the implicit constant appearing in the relation $|\mathcal{F}(\alpha)|\ll|\alpha|^{j}$. Since $\mathcal{F}_{n}\to\mathcal{F}$ pointwise as $n\to\infty$, it follows from the rapid decay of $\mathcal{P}_{1}(\alpha)$ that
\begin{equation}\label{eq:6.14}
\begin{split}
\int\limits_{-\infty}^{\infty}\mathcal{F}(\alpha)\mathcal{P}_{1}(\alpha)\textit{d}\alpha&=\lim_{n\to\infty}\int\limits_{-\infty}^{\infty}\mathcal{F}_{n}(\alpha)\mathcal{P}_{}(\alpha)\textit{d}\alpha\\
&=\lim_{n\to\infty}\lim\limits_{X\to\infty}\frac{1}{X}\int\limits_{X}^{2X}\mathcal{F}_{n}\big(\widehat{\mathcal{E}}_{1}(x)\big)\textit{d}x\,.
\end{split}
\end{equation}
We conclude from \eqref{eq:6.13} and \eqref{eq:6.14} that $\frac{1}{X}\int_{X}^{2X}\mathcal{F}\big(\widehat{\mathcal{E}}_{1}(x)\big)\textit{d}x\to\int_{-\infty}^{\infty}\mathcal{F}(\alpha)\mathcal{P}_{1}(\alpha)\textit{d}\alpha$ as $X\to\infty$. It follows that \eqref{eq:6.10} holds for all continuous functions of polynomial growth. The extension to include the class of (piecewise)-continuous functions of polynomial growth is now straightforward, and so \eqref{eq:1.1} is proved.\\
The decay estimates \eqref{eq:1.2} stated in Theorem $1$ have already been proved in Proposition $3$, and it remains to show that $\mathcal{P}_{1}(\alpha)$ defines a probability density. To that end, we note that the LHS of \eqref{eq:6.10} is real and non-negative whenever $\mathcal{F}$ is. Since $\mathcal{P}_{1}(\alpha)$ is continuous, by choosing a suitable test function $\mathcal{F}$ in \eqref{eq:6.10}, we conclude that $\mathcal{P}_{q}(\alpha)\geq0$ for real $\alpha$. Taking $\mathcal{F}\equiv1$ in \eqref{eq:6.10} we find $\int_{-\infty}^{\infty}\mathcal{P}_{1}(\alpha)\textit{d}\alpha=1$. The proof of Theorem 1 is therefor complete.  
\end{proof}
\noindent
The proof of Theorem $2$ is now immediate.
\begin{proof}(Theorem 2).
In the particular case where $\mathcal{F}(\alpha)=\alpha^{j}$ with $j\geq1$ an integer, we have by Theorem $1$
\begin{equation}\label{eq:6.15}
\lim\limits_{X\to\infty}\frac{1}{X}\int\limits_{ X}^{2X}\widehat{\mathcal{E}}^{j}_{1}(x)\textit{d}x=\int\limits_{-\infty}^{\infty}\alpha^{j}\mathcal{P}_{1}(\alpha)\textit{d}\alpha\,.
\end{equation}
It immediately follows from \eqref{eq:5.1} in Proposition $4$ that
\begin{equation}\label{eq:6.16}
\int\limits_{-\infty}^{\infty}\alpha^{j}\mathcal{P}_{1}(\alpha)\textit{d}\alpha=\sum_{s=1}^{j}\underset{\,\,\ell_{1},\,\ldots\,,\ell_{s}\geq1}{\sum_{\ell_{1}+\cdots+\ell_{s}=j}}\,\,\frac{j!}{\ell_{1}!\cdots\ell_{s}!}\underset{\,\,\,m_{s}>\cdots>m_{1}}{\sum_{m_{1},\,\ldots\,,m_{s}=1}^{\infty}}\prod_{i=1}^{s}\Xi(m_{i},\ell_{i})\,,
\end{equation}
where the series on the RHS of \eqref{eq:6.16} converges absolutely, and for integers $m,\ell\geq1$, the term $\Xi(m,\ell)$ is given as in \eqref{eq:1.4}.\\
By definition $\Xi(m,1)=0$, and so it follows from \eqref{eq:6.16} that $\int_{-\infty}^{\infty}\alpha\mathcal{P}_{1}(\alpha)\textit{d}\alpha=0$. Suppose now that $j\geq3$ is an integer which satisfies $j\equiv1\,(2)$, and consider a summand
\begin{equation}\label{eq:6.17}
\underset{\,\,\,m_{s}>\cdots>m_{1}}{\sum_{m_{1},\,\ldots\,,m_{s}=1}^{\infty}}\prod_{i=1}^{s}\Xi(m_{i},\ell_{i})\,,
\end{equation}
with $\ell_{1}+\cdots+\ell_{s}=j$. We may clearly assume that $\ell_{1},\ldots,\ell_{s}\geq2$, for otherwise \eqref{eq:6.17} vanishes. As $j$ is odd, it follows that $|\{1\leq i\leq s: \ell_{i}\equiv1\,(2)\}|$ is also odd. Since $\Xi(m,\ell)<0$ for $\ell>1$ odd, and $\Xi(m,\ell)>0$ for $\ell$ even, it follows that \eqref{eq:6.17} is strictly negative. We conclude that $\int_{-\infty}^{\infty}\alpha^{j}\mathcal{P}_{1}(\alpha)\textit{d}\alpha<0$. The proof of Theorem $2$ is therefore complete. 
%Since $\Xi(m,1)=0$, it follows that for $1\leq s\leq j$, the summation over the variables $\ell_{1}+\cdots+\ell_{s}=j$ \eqref{eq:6.16} is restricted to $\ell_{1},\ldots,\ell_{s}\geq2$, and since $j$ is odd,  
%Note that for $\ell=1$ the sum in \eqref{eq:1.4} is void, so by definition $\Xi(m,1)=0$. In particular, it follows that $\int_{-\infty}^{\infty}\alpha\mathcal{P}_{1}(\alpha)\textit{d}\alpha=0$, and $\int_{-\infty}^{\infty}\alpha^{j}\mathcal{P}_{1}(\alpha)\textit{d}\alpha<0$ for $j\equiv1\,(2)$ greater then one. 
\end{proof}
\noindent
\textbf{Acknowledgements.} I would like to express my sincere gratitude to Prof. Amos Nevo for his support and encouragement throughout the writing of this paper. I would also like to thank Prof. Ze{\'e}v Rudnick for a stimulating discussion on this fascinating subject, which subsequently lead to the writing of this paper.

\end{document}